\newtheorem{theorem}{Theorem}[section]
\newtheorem{lemma}[theorem]{Lemma}
\newtheorem{proposition}[theorem]{Proposition}
\numberwithin{equation}{section}
\theoremstyle{definition}
\newtheorem{definition}[theorem]{Definition}
\newtheorem{remark}[theorem]{Remark}
\newcommand{\wt}{\widetilde}
\newcommand{\wh}{\widehat}
\newcommand{\Zq}{\mathbb Z /q}
\newcommand{\bs}{\breve{\sigma}}
\newcommand{\bz}{\breve{\zeta}}
\begin{document}

\title[The affine group of certain exceptionally symmetric origamis]{The action of the affine diffeomorphisms on the relative homology group of certain exceptionally symmetric origamis}

\author{Carlos Matheus and Jean-Christophe Yoccoz}
\address{Coll\`ege de France, 3 Rue d'Ulm, 75005, Paris, France}
\email{matheus@impa.br, jean-c.yoccoz@college-de-france.fr.}

\date{\today}

\begin{abstract}
We compute explicitly the action of the group of affine diffeomorphisms on the relative homology of two remarkable origamis
 discovered respectively by Forni (in genus 3) and Forni-Matheus (in genus 4). We show that, in both cases, the action on the non trivial  part of the
  homology is through finite groups. In particular, the action on some $4$-dimensional invariant subspace of the homology leaves invariant a root system of $D_4$ type.
  This provides as a by-product a new 
 proof of (slightly stronger versions of) the results of Forni and Matheus: the non trivial Lyapunov exponents of the Kontsevich-Zorich cocycle for the Teichm\"uller disks of these two origamis are equal to zero.
\end{abstract}
\maketitle
%\pageheight{8.5truein}
%\pagewidth{6.5truein}

\tableofcontents

%%%%%%%%%%%%%%%%%%%%%%%%%%%%%%%%%%%%%%%%%%%%%%%%%%%%%%%%%%%%%%%%%%%%%%%%%%%%%%%%%%%%%%%%
%%%%%%%%%%%%%%%%%%%%%%%%%%%%%%%%%%%%%%%%%%%%%%%%%%%%%%%%%%%%%%%%%%%%%%%%%%%%%%%%%%%%%%%%
%%%%%%%%%%%%%%%%%%%%%%%%%%%%%%%%%%%     Introduction     %%%%%%%%%%%%%%%%%%%%%%%%%%%%%%%%%%%%%%%%%
%%%%%%%%%%%%%%%%%%%%%%%%%%%%%%%%%%%%%%%%%%%%%%%%%%%%%%%%%%%%%%%%%%%%%%%%%%%%%%%%%%%%%%%%
%%%%%%%%%%%%%%%%%%%%%%%%%%%%%%%%%%%%%%%%%%%%%%%%%%%%%%%%%%%%%%%%%%%%%%%%%%%%%%%%%%%%%%%%

\section{Introduction}\label{intro}

Our main objective is the explicit description of the action on homology of the affine group of the square-tiled translation surfaces constructed by  Forni~\cite{ForniSurvey} and Forni- Matheus \cite{FM}, characterized by the total degeneracy of the so-called \emph{Kontsevich-Zorich cocycle}. Before going to the statements of 
our results, let us briefly recall some basic material about these notions.

\subsection{The Teichm\"uller flow and the Kontsevich-Zorich cocycle}

Let $M$ be a closed oriented surface  of genus $g\geq 1$, and $\Sigma$ be a finite subset of $M$. We denote by $\textrm{Diff}^+(M,\Sigma)$ the group of
orientation-preserving 
 homeomorphisms of $M$ which preserve $\Sigma$, by $\textrm{Diff}_0^+(M,\Sigma)$ the connected component of the identity in $\textrm{Diff}^+(M,\Sigma)$ 
(i.e., $\textrm{Diff}_0^+(M,\Sigma)$ is the subset of homeomorphisms in $\textrm{Diff}^+(M,\Sigma)$ which are isotopic to the identity rel. $\Sigma$) 
and by
$\Gamma(M,\Sigma):=\textrm{Diff}^+(M,\Sigma)/\textrm{Diff}_0^+(M,\Sigma)$ 
the \emph{mapping class group} of $(M,\Sigma)$. When $\Sigma$ is empty, we just write $\textrm{Diff}^+(M)$, $\textrm{Diff}_0^+(M)$, $\Gamma(M)$ for $\textrm{Diff}^+(M,\Sigma)$, $\textrm{Diff}_0^+(M,\Sigma)$, $\Gamma(M,\Sigma)$.
\begin{definition} A {\it translation surface} structure on $M$ is a complex structure on $M$ together with a non-zero holomorphic 1-form $\omega$ w.r.t. this complex structure. The \emph{Teichm\"uller space} $\mathcal Q (M)$ (resp. the \emph{moduli space} 
$ \mathcal{M}(M)$) is the space of orbits for the natural action of $\textrm{Diff}_0^+(M)$ (resp. of $\textrm{Diff}^+(M)$) on the space of translation surface structures. We have thus $ \mathcal{M}(M)= \mathcal Q (M) /\Gamma(M)$.
\end{definition}

The group $SL(2,\mathbb{R})$ acts naturally on $\mathcal{M}(M)$ by postcomposition on the charts defined by local primitives of the
 holomorphic $1$-form. The \emph{Teichm\"uller  flow} $G_t$ is the restriction of the action to the diagonal subgroup
 $\textrm{diag}(e^{t},e^{-t})$ of $SL(2,\mathbb{R})$ on $\mathcal{M} (M)$. For later reference, we recall some of the main structures of
 the Teichm\"uller space $\mathcal Q (M)$ and the moduli space $\mathcal{M} (M)$:

\begin{itemize}
\item $\mathcal{M} (M)$ is stratified into analytic spaces $\mathcal{M} (M,\kappa)=\mathcal{M}_{\kappa}$  obtained by
 fixing the multiplicities $\kappa=(k_1,\dots,k_\sigma)$ of the set of zeros $\Sigma = \{p_1,\dots,p_\sigma\}$ of the holomorphic $1$-form (here $\sum k_i=2g-2$); the corresponding Teichm\"uller space $\mathcal Q (M,\Sigma,\kappa)$ is the space of orbits for the natural action of $\textrm{Diff}_0^+(M,\Sigma)$ on the set of translation surface structures with prescribed zeroes in $\Sigma$. One has $ \mathcal{M}(M,\kappa)= \mathcal Q (M,\Sigma,\kappa) /\Gamma(M,\Sigma)$;
\item The total area function $A:\mathcal{M} (M) \rightarrow \mathbb{R}^+$, $A(\omega) = \frac{i}{2} \int_M \omega \wedge \overline \omega$ is
 $SL(2,\mathbb{R})$-invariant so that the unit bundle $\mathcal{M}^{(1)}(M):=A^{-1}(1)$ and its strata $\mathcal{M}^{(1)}(M,\kappa):=
 \mathcal{M} (M,\kappa)\cap \mathcal{M}^{(1)}(M)$ are $SL(2,\mathbb{R})$-invariant (and, \emph{a fortiori}, $G_t$-invariant);
\item the Teichm\"uller space $\mathcal Q (M,\Sigma,\kappa)$  has a locally  affine structure modeled on the complex vector space $H^1(M,\Sigma,\mathbb{C})$: 
the local charts are given by the \emph{period map} defined by integrating the holomorphic $1$-form
 against the homology classes in $H_1(M,\Sigma,\mathbb{Z})$;
\item the Lebesgue measure on the Euclidean space $H^1(M,\Sigma,\mathbb{C})$ induces an absolutely continuous $SL(2,\mathbb{R})
$-invariant measure $\mu_\kappa$ on $\mathcal{M} (M,\kappa)$ such that the conditional measure $\mu_\kappa^{(1)}$ induced on
 $\mathcal{M}^{(1)}(M,\kappa)$ is invariant by the $SL(2,\mathbb{R})$-action (and hence $G_t$-invariant).
\end{itemize}
See Veech~\cite{Veech},~\cite{Veech2}, and the surveys of Yoccoz~\cite{Y-Pisa} and Zorich~\cite{Zorich3} for more details.

Once we get the existence of a natural invariant measure $\mu_\kappa^{(1)}$ for the Teichm\"uller flow, it is natural to ask whether
 $\mu_\kappa^{(1)}$ has finite mass and/or $\mu_\kappa^{(1)}$ is ergodic with respect to the Teichm\"uller dynamics. In this direction,
 we have the following result:
\begin{theorem}[Masur~\cite{Masur1}, Veech~\cite{Veech}]The total volume of $\mu_\kappa^{(1)}$ is finite and the Teichm\"uller flow
 $G_t=\textrm{diag}(e^t,e^{-t})$ is ergodic on each connected component of $\mathcal{M} (M,\kappa)$ with respect to $\mu_\kappa^{(1)}$.
\end{theorem}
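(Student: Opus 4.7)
The plan is first to establish finiteness of $\mu_\kappa^{(1)}$ and then to derive ergodicity by exploiting the hyperbolic features of the $SL(2,\mathbb{R})$-action together with the absolute continuity of $\mu_\kappa^{(1)}$ in period coordinates. Throughout, I would work in period coordinates, where the Teichm\"uller space $\mathcal{Q}(M,\Sigma,\kappa)$ is locally modeled on $H^1(M,\Sigma,\mathbb{C})$ and the measure $\mu_\kappa$ is the pullback of Lebesgue measure.

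For finiteness, after reducing to a fundamental domain for $\Gamma(M,\Sigma)$ acting on $\mathcal{Q}(M,\Sigma,\kappa)$, the essential task is to control the ``thin part'' $\mathcal{M}_\varepsilon \subset \mathcal{M}^{(1)}(M,\kappa)$ of surfaces carrying a saddle connection of length $\leq \varepsilon$. I would stratify $\mathcal{M}_\varepsilon$ according to the combinatorial type of a short saddle connection and run a Fubini argument in period coordinates: the period associated with a short saddle connection lives in a disk of radius $\varepsilon$, contributing a factor $\varepsilon^2$, while the complementary periods — taken from a basis of $H_1(M,\Sigma,\mathbb{Z})$ extending the short cycle — vary in a region of bounded Lebesgue measure once the constraint $A=1$ is imposed. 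Summing over the finitely many combinatorial types coming from, say, a Delaunay-like triangulation yields $\mu_\kappa^{(1)}(\mathcal{M}_\varepsilon) = O(\varepsilon^2)$, which gives finiteness after integration.

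For ergodicity, the crucial consequence of finiteness is Poincar\'e recurrence. Combined with Masur's criterion (escape of $G_t\omega$ to infinity forces non-unique ergodicity of the horizontal foliation of $\omega$), this ensures that $\mu_\kappa^{(1)}$-a.e.\ orbit recurs to a compact subset of $\mathcal{M}(M,\kappa)$. On this recurrent set I would run a Hopf-type argument using the two unipotent subgroups of $SL(2,\mathbb{R})$: their orbits are exponentially contracted (respectively expanded) by $G_t$, and the product structure of period coordinates supplies absolute continuity of $\mu_\kappa^{(1)}$ along them. A $G_t$-invariant $L^2$ function is then constant along horocycle orbits; Moore's ergodicity theorem applied to the $SL(2,\mathbb{R})$-action promotes this to $SL(2,\mathbb{R})$-invariance, hence constancy on each connected component.

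The main difficulty is that the strong stable/unstable foliations of $G_t$ are only measurable on the non-compact stratum, so the classical Hopf holonomy construction does not apply directly. I would bypass this via Veech's approach: build a Poincar\'e section $\mathcal{T}$ transverse to $G_t$ on which the first-return map is given by Rauzy--Veech induction, organized as a countable Markov shift on the combinatorial data of interval exchange transformations. Ergodicity of this Markov shift is proved by a Perron--Frobenius / distortion argument for its transfer operator and then transferred to $G_t$ by Ambrose--Kakutani suspension; the decomposition into irreducible Rauzy classes matches the connected components of $\mathcal{M}(M,\kappa)$, which accounts for the precise form of the statement.
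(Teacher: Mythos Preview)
The paper does not contain a proof of this theorem. It is stated in the introduction as background, with attribution to Masur~\cite{Masur1} and Veech~\cite{Veech}, and no argument is given; the authors simply cite it and move on. So there is nothing in the paper to compare your proposal against.

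That said, your outline is a reasonable pastiche of the standard approaches to this classical result, though it mixes ideas from several sources and eras. The $O(\varepsilon^2)$ thin-part estimate you describe is closer to the Masur--Smillie/Eskin--Masur refinements than to the original 1982 arguments; and for ergodicity you correctly identify that the direct Hopf argument runs into trouble because the stable/unstable foliations are only measurable, and that Veech's route through Rauzy--Veech induction and the zippered-rectangles suspension is the workable alternative. One caution: invoking Moore's theorem presupposes that you already know the $SL(2,\mathbb{R})$-action preserves $\mu_\kappa^{(1)}$ and is measure-theoretically well-behaved, but it does not by itself give ergodicity of $G_t$---you still need the Markov-shift/transfer-operator argument you mention at the end, so the Hopf paragraph is somewhat redundant. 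If you were actually writing this up, you would want to commit to one of the two strategies rather than sketching both.
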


\begin{remark}Veech~\cite{Veech2} showed that the strata are \emph{not} always connected. More recently, Kontsevich and Zorich~\cite
{KZ}  gave a complete classification of the connected components of all strata of the moduli spaces of holomorphic $1$-forms 
(see Lanneau~\cite{Lanneau} for the same result for  quadratic differentials).
\end{remark}

 The \emph{Kontsevich-Zorich} cocycle $G_t^{KZ}$ is the quotient of the trivial
 cocycle $G_t\times id: \mathcal {Q}(M)\times H^1(M,\mathbb{R})\to \mathcal {Q}(M)\times H^1(M,\mathbb{R})$ by the action of the mapping class group 
$\Gamma (M)$. As 
the action is \emph{symplectic}, the Lyapunov exponents of $G_t^{KZ}$ with respect to any $G_t$-invariant ergodic probability $\mu$ are
 \emph{symmetric} w.r.t. $0$:
\begin{equation*}
1=\lambda_1^\mu\geq\dots\lambda_g^\mu\geq-\lambda_g^\mu\geq\dots\geq-\lambda_1^\mu=-1.
\end{equation*}
It turns out that the $g$ non-negative exponents $1=\lambda_1^\mu\geq \lambda_2^\mu\geq\dots\geq\lambda_g^\mu$  determine the Lyapunov
 spectrum of the 
Teichm\"uller flow\footnote{In fact, this is one of the motivation of the introduction of the Kontsevich-Zorich cocycle.}. Indeed, the
 Lyapunov exponents of the Teichm\"uller flow with respect to a $G_t$-invariant ergodic probability $\mu$ on
 $\mathcal{M} (M,\kappa)$ are
\begin{eqnarray*}
2=(1+\lambda_1^\mu)\geq (1+\lambda_2^\mu)\geq\dots\geq (1+\lambda_g^{\mu})\geq \overbrace{1=\dots=1}^{\sigma-1}\geq
(1-\lambda_g^\mu) \geq \\ \dots\geq(1-\lambda_2^\mu)\geq 0\geq-(1-\lambda_2^\mu)\geq\dots\geq-(1-\lambda_g^\mu)\geq \overbrace{-1=\dots=-1}^{\sigma-1} \\
\geq-(1+\lambda_g^\mu)\geq \dots\geq -(1+\lambda_2^\mu)\geq-(1+\lambda_1^\mu)=-2.
\end{eqnarray*}
See~\cite{Zorich1} and~\cite{K} for further details. On the other hand, Zorich and Kontsevich conjectured that the
 Lyapunov exponents of $G_t^{KZ}$ for the canonical absolutely continuous measure $\mu_\kappa^{(1)}$ are all non-zero (i.e.,
 non-uniform hyperbolicity) and distinct (i.e., all Lyapunov exponents have multiplicity 1). After the fundamental works of 
G. Forni~\cite{Forni} (showing the non-uniform hyperbolicity of $G_t^{KZ}$) and Avila-Viana~\cite{AV} (proving the simplicity of the
 Lyapunov spectrum), it follows that the Zorich-Kontsevich conjecture is true. In other words, the Lyapunov exponents of $G_t^{KZ}$ at a
 $\mu_\kappa^{(1)}$-generic point are all non-zero and they have multiplicity 1.

\subsection{The affine group of a translation surface}

Let $(M,\omega)$ be a translation surface, i.e let $\omega$ be a non-zero holomorphic 1-form w.r.t. some complex structure on $M$. We denote as above by $\Sigma$ the set of zeros of $\omega$.

\begin{definition} The {\it affine group} 
${\rm Aff}(M,\omega)$ of $(M,\omega)$ is the group of orientation preserving homeomorphisms of $M$ which preserve $\Sigma$ and are given 
by affine maps in the charts defined by local primitives of $\omega$. In these charts, the differential of an affine map is an element of $SL(2,\mathbb{R})$. We obtain in this way a homomorphism from ${\rm Aff}(M,\omega)$ into $SL(2,\mathbb{R})$. The {\it automorphism group} 
${\rm Aut}(M,\omega)$ of $(M,\omega)$ is the kernel of this homomorphism.
\end{definition}

\begin{definition} The image of this homomorphism is called the {\it Veech group} of $(M,\omega)$ and denoted by $SL(M,\omega)$.
It is a discrete subgroup of $SL(2,\mathbb{R})$, equal to the stabilizer of $(M,\omega)$ for the action of $SL(2,\mathbb{R})$ on $ \mathcal{M}(M)$.
 \end{definition}

One has thus an exact sequence
\begin{equation}
1 \longrightarrow {\rm Aut}(M,\omega) \longrightarrow {\rm Aff}(M,\omega) \longrightarrow SL(M,\omega) \longrightarrow  1.
\end{equation}

For a nice account on affine and Veech groups  see the survey of Hubert and Schmidt~\cite{HuSc}.

In genus $g\geq 2$, the affine group ${\rm Aff}(M,\omega)$ injects into $\Gamma(M)$ (and \emph{a fortiori} into $\Gamma(M,\Sigma)$): for elements with non trivial image in $SL(M,\omega)$, this can be viewed from the period map (see Veech~\cite{Veech3}); for elements in $ {\rm Aut}(M,\omega)$, this is a consequence of the Lefschetz fixed point theorem, as fixed points then have index 1.

 Consider the natural surjective map:
$$H^1(M,\Sigma,\mathbb{R})\to H^1(M,\mathbb{R})$$
between the relative and absolute cohomology groups of $(M,\Sigma)$. We denote by $H^1_{st}(M,\Sigma,\mathbb{R})$  the  subspace of $H^1(M,\Sigma,\mathbb{R})$ spanned by $dx=\Re(\omega)$ and $dy=\Im(\omega)$ and by $H^1_{st}$ its image in $H^1(M,\mathbb{R})$. The subspace
$H^1_{st}$ (and therefore also $H^1_{st}(M,\Sigma,\mathbb{R})$) is $2$-dimensional: this can be seen either as a standard fact from Hodge theory or more concretely from Veech's \emph{zippered rectangles construction} (see~\cite{Veech1},~\cite{Y-Pisa},~\cite{Zorich3}). We denote by $H^1_{(0)}$ the orthogonal of $H^1_{st}$ with respect to the exterior product in $ H^1(M,\mathbb{R})$. We have 
$$ H^1(M,\mathbb{R}) = H^1_{st} \oplus H^1_{(0)},$$
because the $2$-form $dx \wedge dy$ defines a
 non-zero element in $H^2(M,\mathbb {R})$.
\medskip

The intersection form defines a non-degenerate pairing between the homology groups $H_1(M-\Sigma,\mathbb{R})$ and $H_1(M,\Sigma,\mathbb{R})$, and also between $H_1(M,\mathbb{R})$ and itself.

Let $H_1^{(0)} \subset H_1(M,\mathbb{R})$ be the annihilator of $H_{st}^1$ and $H_1^{(0)}(M,\Sigma,\mathbb{R}) \subset H_1(M,\Sigma,\mathbb{R})$ be  the annihilator of $H^1_{st}(M,\Sigma,\mathbb{R})$. Both are codimension $2$ subspaces. We also introduce the orthogonal $H_1^{st}(M-\Sigma,\mathbb{R})\subset H_1(M-\Sigma,\mathbb{R})$ of $H_1^{(0)}(M,\Sigma,\mathbb{R})\subset H_1(M,\Sigma,\mathbb{R})$ with respect to the intersection form between $H_1(M-\Sigma,\mathbb{R})$ and $H_1(M,\Sigma,\mathbb{R})$,
and the orthogonal $H_1^{st}\subset H_1(M,\mathbb{R})$ of $H_1^{(0)}$ for the intersection form on $H_1(M,\mathbb{R})$. 

Observe that $H_1^{st}$
is the annihilator of $H^1_{(0)}$, and also the image of $H_1^{st}(M-\Sigma,\mathbb{R})$ under the map from $H_1(M-\Sigma,\mathbb{R})$ to $H_1(M,\mathbb{R})$. Both $H_1^{st}$ and 
$H_1^{st}(M-\Sigma,\mathbb{R})$ have dimension 2. 

 \medskip
 
 One has
 $$H_1(M,\mathbb{R})=H_1^{st}\oplus H_1^{(0)},$$
 $$H_1(M,\Sigma,\mathbb{R})=H_1^{st}\oplus H_1^{(0)}(M,\Sigma,\mathbb{R}).$$
 \medskip

 These decompositions only depend on the image of the translation surface structure in Teichm\"uller space; they are constant along $SL(2,\mathbb{R})$-orbits, invariant under the action of $\textrm{Aff}(M,\omega)$, and covariant under the action of the mapping class groups. The same is true for the decompositions of the cohomology groups. In particular, the decomposition
 $$ H^1(M,\mathbb{R}) = H^1_{st} \oplus H^1_{(0)}$$
 is invariant under the Kontsevich-Zorich cocycle $G_t^{KZ}$. The subbundle $H^1_{st}$ correspond to the extreme exponents $\pm 1$  of $G_t^{KZ}$. 

\begin{definition} The restriction of the Kontsevich-Zorich cocycle $G_t^{KZ}$ to the invariant subbundle $ H^1_{(0)}$ is called the {\it reduced}
 Kontsevich-Zorich cocycle and is denoted by $G_t^{KZ,red}$.
 \end{definition}
 \medskip

\begin{remark}In general, $H_1(M,\mathbb{R})$ \emph{doesn't} have a $\textrm{Aff}(M,\omega)$-invariant supplement inside $H_1(M,\Sigma,\mathbb{R})$: see Appendix~\ref{a.supplement}.
\end{remark}

\subsection{Veech surfaces and square-tiled surfaces}

Let $(M,\omega)$ be a translation surface.
\begin{definition} $(M,\omega)$ is a {\it Veech surface} if $SL(M,\omega)$ is a lattice in $SL(2,\mathbb{R})$. This happens iff
the $SL(2,\mathbb{R})$-orbit of $(M,\omega)$ in $\mathcal{M}(M)$ is closed (see~\cite{HuSc} and~\cite{Zorich3}).
\end{definition}

The stabilizer of this $SL(2,\mathbb{R})$-orbit in $\Gamma(M)$ is exactly the affine group $\textrm{Aff}(M,\omega)$. We can thus view the reduced  Kontsevich-Zorich cocycle $G_t^{KZ,red}$ over this closed $SL(2,\mathbb{R})$-orbit as the quotient of the trivial cocycle 
$$\textrm{diag}(e^t,e^{-t}) \times id \,:SL(2,\mathbb R) \times H^1_{(0)}\to SL(2,\mathbb R) \times H^1_{(0)}$$
by the action of the affine group $\textrm{Aff}(M,\omega)$.\\

The two examples that we will consider belong to a special kind of Veech surfaces.

\begin{definition} $(M,\omega)$ is a 
{\it square-tiled surface} if the integral of $\omega$ over any path joining two zeros of $\omega$ belongs to $\mathbb{Z} + i\mathbb{Z}$. 
Equivalently, there exists a ramified covering $\pi: M \rightarrow \mathbb{R}^2/\mathbb{Z}^2$ unramified outside $0 \in \mathbb{R}^2/\mathbb{Z}^2$
such that $\omega = \pi^*(dz)$. Every square-tiled surface is a Veech surface. One says that the square-tiled surface $(M,\omega)$ is {\it primitive} if the relative periods of $\omega$ 
span the $ \mathbb{Z}$-module
$\mathbb{Z} + i\mathbb{Z}$. In this case the Veech group $SL(M,\omega)$ is a subgroup of $SL(2,\mathbb{Z})$ of finite index (see~\cite{HuSc},~\cite{Zorich3}).
\end{definition}

In a square-tiled surface $(M, \omega)$, the squares are the connected components of the inverse image $\pi^{-1}((0,1)^2)$, with 
$\pi: M \rightarrow \mathbb{R}^2/\mathbb{Z}^2$ as above. The set ${\rm Sq}(M,\omega)$ of squares of $(M,\omega)$ is finite and equipped with two one-to-one self maps 
$r$ (for right) and $u$ (for up) which associate
to a square the square to the right of it (resp. above it). The connectedness of the surface means that the group of permutations of 
${\rm Sq}(M,\omega)$ generated by $r$ and $u$ acts transitively on ${\rm Sq}(M,\omega)$. Conversely, a finite set $S$, equipped
with two one-to-one maps 
$r$  and $u$ such that the group of permutations 
 generated by $r$ and $u$ acts transitively on $S$, defines a square-tiled surface. See~\cite{Zorich3}.
 
 \medskip
 
 For a square-tiled surface, it is easy to identify the factors $H_1^{st}$ and $H_1^{(0)}$ in the decomposition of the homology groups, and to see that in this case they are defined over $\mathbb{Q}$. 
 
 Let $\Sigma'\supset\Sigma$ be the inverse image of $\{0\}$ under the ramified covering $\pi$. For each square $i \in {\rm Sq}(M,\omega)$, let $\sigma_i \in  H_1(M,\Sigma',\mathbb{Z})$ be the homology class defined by a path in $i$ from the bottom left corner to the bottom right corner; let $\zeta_i $ be the homology class defined by a path in $i$ from the bottom left corner to the upper left corner. Let $\sigma$ (resp. $\zeta$) be the sum over ${\rm Sq}(M,\omega)$ of the $\sigma_i$ (resp. of the $\zeta_i $). It is clear that both $\sigma$ and $\zeta$ belong to $H_1(M,\mathbb{Z})$. Let $\widetilde{\sigma}$ (resp., $\widetilde{\zeta}$) be the class in $H_1(M-\Sigma,\mathbb{Z})$ obtained from $\sigma$ (resp., $\zeta$) by shifting each $\sigma_i$ (resp., $\zeta_i$) slightly upwards (resp., to the right).
 
 \begin{proposition}\label{p.1}\begin{enumerate}
 \item The subspace $H_1^{(0)}$ (resp., $H_1^{(0)}(M,\Sigma,\mathbb{R})$) is the kernel of the homomorphism from $H_1(M,\mathbb{R})$ (resp., $H_1(M,\Sigma,\mathbb{R})$) to $H_1(\mathbb{R}^2/\mathbb{Z}^2,\mathbb{R})$ (resp., $H_1(\mathbb{R}^2/\mathbb{Z}^2,\{0\},\mathbb{R})$) induced by the ramified covering $\pi$.
 \item One has $$H_1^{st}(M-\Sigma,\mathbb{R}) = \mathbb{R}\widetilde{\sigma}\oplus\mathbb{R}\widetilde{\zeta},$$
 $$H_1^{st} = \mathbb{R}\sigma\oplus\mathbb{R}\zeta.$$
 Moreover, the action of the affine group on $H_1^{st}$ is through the homomorphism from the affine group to $SL(M,\omega)\subset SL(2,\mathbb{R})$ and the standard action of $SL(2,\mathbb{R})$ on $\mathbb{R}\sigma\oplus\mathbb{R}\zeta$.
 \end{enumerate}
 \end{proposition}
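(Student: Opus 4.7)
The plan is to first locate $H^1_{st}$ using the identity $\omega = \pi^*(dz)$, then dualize to identify $H_1^{(0)}$, and finally pin down its transverse factor $H_1^{st}$ by computing intersection numbers against $\widetilde{\sigma}$ and $\widetilde{\zeta}$.

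For part (1), the point is that $dx = \Re(\omega) = \pi^*(dx_0)$ and $dy = \Im(\omega) = \pi^*(dy_0)$, where $dx_0, dy_0$ are the standard generators of $H^1(\mathbb{R}^2/\mathbb{Z}^2,\mathbb{R})$. Since the long exact sequence of the pair $(\mathbb{R}^2/\mathbb{Z}^2,\{0\})$ gives a canonical isomorphism $H^1(\mathbb{R}^2/\mathbb{Z}^2,\{0\},\mathbb{R}) \cong H^1(\mathbb{R}^2/\mathbb{Z}^2,\mathbb{R})$, the classes $dx_0, dy_0$ also form a basis of the relative cohomology of the torus. Hence $H^1_{st}(M,\Sigma,\mathbb{R})$ is precisely the image of the injection $\pi^*: H^1(\mathbb{R}^2/\mathbb{Z}^2,\{0\},\mathbb{R}) \to H^1(M,\Sigma,\mathbb{R})$, and analogously in absolute cohomology. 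Adjointness of $\pi^*$ and $\pi_*$ for the evaluation pairing between cohomology and homology then identifies the annihilator $H_1^{(0)}(M,\Sigma,\mathbb{R})$ with $\ker \pi_*$, giving both statements in (1).

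For part (2), the crucial computation is the formula
\begin{equation*}
\widetilde{\sigma} \cdot c \;=\; \int_c dy, \qquad \widetilde{\zeta} \cdot c \;=\; -\int_c dx
\end{equation*}
for every $c \in H_1(M,\Sigma,\mathbb{R})$, where the left-hand side is the intersection pairing $H_1(M-\Sigma,\mathbb{R}) \times H_1(M,\Sigma,\mathbb{R}) \to \mathbb{R}$. I would check this on the generating set $\{\sigma_i,\zeta_i\}_i$ of $H_1(M,\Sigma',\mathbb{R})$ (which surjects onto $H_1(M,\Sigma,\mathbb{R})$): the shifted cycle $\widetilde{\sigma}$ misses every horizontal edge, crosses each vertical edge transversally exactly once with the same sign, and the resulting intersection numbers match $\int_{\sigma_i} dy = 0$, $\int_{\zeta_i} dy = 1$; an analogous orientation calculation handles $\widetilde{\zeta}$. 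Once this formula is in hand, any $c \in H_1^{(0)}(M,\Sigma,\mathbb{R})$ kills $dx, dy \in H^1_{st}(M,\Sigma,\mathbb{R})$, so $\widetilde{\sigma}$ and $\widetilde{\zeta}$ annihilate $H_1^{(0)}(M,\Sigma,\mathbb{R})$ and therefore belong to $H_1^{st}(M-\Sigma,\mathbb{R})$. Since this subspace is $2$-dimensional and $\widetilde{\sigma}, \widetilde{\zeta}$ are linearly independent, they form a basis. Pushing forward via $H_1(M-\Sigma,\mathbb{R}) \to H_1(M,\mathbb{R})$ (under which the small upward/rightward shifts become null-homotopic, so $\widetilde{\sigma} \mapsto \sigma$ and $\widetilde{\zeta} \mapsto \zeta$) yields $H_1^{st} = \mathbb{R}\sigma \oplus \mathbb{R}\zeta$.

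For the final statement on the affine action, the period map $c \mapsto (\int_c dx, \int_c dy)$ restricts to an isomorphism $H_1^{st} \cong \mathbb{R}^2$ sending $\sigma$ to $(N,0)$ and $\zeta$ to $(0,N)$, where $N = |{\rm Sq}(M,\omega)|$. For $\phi \in \textrm{Aff}(M,\omega)$ with derivative $A = \left(\begin{smallmatrix} a & b \\ c & d \end{smallmatrix}\right) \in SL(M,\omega)$, one has $\phi^* dx = a\,dx + b\,dy$ and $\phi^* dy = c\,dx + d\,dy$ in charts, so $\int_{\phi_* \gamma} dx$ and $\int_{\phi_* \gamma} dy$ transform by $A$; thus $\phi_*$ acts on $H_1^{st}$ exactly as the matrix $A$ in the basis $(\sigma/N,\zeta/N)$, which is the standard $SL(2,\mathbb{R})$-action on $\mathbb{R}\sigma \oplus \mathbb{R}\zeta$. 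The main delicate point is getting the orientations right in the intersection calculation; once that is verified, the rest is essentially formal bookkeeping with Poincaré--Lefschetz duality.
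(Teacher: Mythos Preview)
Your proof is correct and follows essentially the same route as the paper: for part (2) the paper also writes a general class as $\theta = \sum a_i\sigma_i + \sum b_i\zeta_i$, computes $\langle\widetilde{\sigma},\theta\rangle = \sum b_i$ and $\langle\widetilde{\zeta},\theta\rangle = \sum a_i$, and observes that $\theta \in H_1^{(0)}(M,\Sigma,\mathbb{R})$ iff $\sum a_i = \sum b_i = 0$ --- which is your intersection formula $\widetilde{\sigma}\cdot c = \int_c dy$ phrased in coordinates. One small slip: $H_1(M,\Sigma',\mathbb{R})$ does not \emph{surject onto} $H_1(M,\Sigma,\mathbb{R})$; rather $H_1(M,\Sigma,\mathbb{R})$ \emph{injects into} $H_1(M,\Sigma',\mathbb{R})$ (from the long exact sequence of the triple), and since $\widetilde{\sigma},\widetilde{\zeta}$ already lie in $H_1(M-\Sigma',\mathbb{R})$ your verification on the generators $\sigma_i,\zeta_i$ still suffices.
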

 
 \begin{proof}The first part is an immediate consequence of the definitions of $H_1^{(0)}$ and $H_1^{(0)}(M,\Sigma,\mathbb{R})$. 
 
 Let $\theta = \sum a_i\sigma_i+\sum b_i\zeta_i$ be a class in $H_1(M,\Sigma,\mathbb{R})$. For the intersection form between $H_1(M-\Sigma,\mathbb{R})$ and $H_1(M,\Sigma,\mathbb{R})$, we have $$\langle\widetilde{\sigma},\theta\rangle = \sum b_i,$$
 $$\langle\widetilde{\zeta},\theta\rangle = \sum a_i.$$
 
 On the other hand, we have $\theta\in H_1^{(0)}(M,\Sigma,\mathbb{R})$ iff $\sum a_i=\sum b_i = 0$. This shows that $\widetilde{\sigma}$ and $\widetilde{\zeta}$ belong to $H_1^{st}(M-\Sigma,\mathbb{R})$. As this subspace is $2$-dimensional and $\widetilde{\sigma}$, $\widetilde{\zeta}$ are linearly independent, we conclude that $H_1^{st}(M-\Sigma,\mathbb{R}) = \mathbb{R}\widetilde{\sigma}\oplus\mathbb{R}\widetilde{\zeta}$. It follows that $H_1^{st} = \mathbb{R}\sigma\oplus\mathbb{R}\zeta$. The last assertion of the proposition follows from a direct verification.
 \end{proof}
 
 We will denote by $H_1^{(0)}(M,\Sigma,\mathbb{Q})$ the kernel of the homomorphism induced by $\pi$ from $H_1(M,\Sigma,\mathbb{Q})$ to $H_1(\mathbb{R}^2/\mathbb{Z}^2,\{0\},\mathbb{Q})$. We have $H_1^{(0)}(M,\Sigma,\mathbb{R}) = \mathbb{R}\otimes_{\mathbb{Q}}H_1^{(0)}(M,\Sigma,\mathbb{Q})$. For $H_1^{(0)}$ and $H_1^{st}$, we omit the coefficients to keep the notation simple: $H_1^{st}$ designates $\mathbb{Q}\sigma\oplus\mathbb{Q}\zeta$ or $\mathbb{R}\sigma\oplus\mathbb{R}\zeta$ (the context should remove the ambiguity); $H_1^{(0)}$ is the kernel of the homomorphism between the first absolute homology groups of $M$ and $\mathbb{R}^2/\mathbb{Z}^2$ with real or rational coefficients.
 
\subsection{Degenerate $SL(2,\mathbb{R})$-orbits}

 Veech has asked how ``\emph{degenerate}'' the
 Lyapunov spectrum of $G_t^{KZ}$ can be along a \emph{non-typical} $SL(2,\mathbb{R})$-orbit, for instance along a closed orbit.

\medskip
 
This question was first answered by 
G. Forni~\cite{ForniSurvey} who exhibited a beautiful example of a square-tiled  surface $(M_3,\omega_{(3)})$ of genus $g=3$ such that the Lyapunov exponents of $G_t^{KZ}$ for the $SL(2,\mathbb{R})$
-invariant measure $\mu$ supported on the $SL(2,\mathbb{R})$-orbit of $(M_3,\omega_{(3)})$ verify $\lambda_2^\mu=\lambda_3^\mu=0$. 

\medskip

Subsequently,
 Forni and Matheus~\cite{FM} constructed a square-tiled surface $(M_4,\omega_{(4)})$  of genus 4 such that the Lyapunov exponents (with respect to $G_t^{KZ}$) of the $SL(2,\mathbb{R})$-invariant
 measure $\mu$ supported on the $SL(2,\mathbb{R})$-orbit of $(M_4,\omega_{(4)})$ verify $\lambda_2^\mu=\lambda_3^\mu=\lambda_4^\mu=0$.
 
% \bigskip
 
%[ATTENTION, DANS LES DEUX EXEMPLES, IL FAUT CHOISIR LES $x_i$ DE FACON ADEQUATE POUR QUE LA SURFACE SOIT SQUARE-TILED DANS LE SENS PRECEDENT!]

%\bigskip 
 
More precisely, Forni's  example $(M_3,\omega_{(3)})$ is the Riemann surface of genus 3
\begin{equation}\label{e.Forni}
M_3=M_3(x_1,x_2,x_3,x_4)=\{(z,w): w^4 = \prod\limits_{\mu=1}^4(z-x_\mu)\}
\end{equation}
equipped with the Abelian differential $\omega_{(3)} = dz/w^2$. This example was independently discovered, for different reasons, by Herrlich, M\"oller and Schmith\"usen~\cite{HS}.

\medskip

Similarly, Forni and Matheus' example $(M_4,\omega_{(4)})$ is the Riemann surface of genus 4
\begin{equation}\label{e.ForniM}
M_4=M_4(x_1,x_2,x_3)=\{(z,w): w^6 = z^3\cdot\prod\limits_{\mu=1}^3(z-x_\mu)\}
\end{equation}
equipped with the Abelian differential $\omega_{(4)} = zdz/w^3$.

\begin{remark}Actually, these formulas for $(M_3,\omega_{(3)})$ and $(M_4,\omega_{(4)})$ are the description of \emph{entire} closed $SL(2,\mathbb{R})$-orbits. The corresponding square-tiled surfaces (in the sense of the previous definition) belonging to these orbits are obtained by appropriate choices of the points $x_\mu$. Also, we point out that these examples are particular cases of a more general family studied by I.~Bouw and M.~M\"oller~\cite{BM}.
\end{remark}

\begin{remark}In the sequel, $\Sigma_{(3)}$ denotes the set of zeroes of $\omega_{(3)}$ and $\Sigma_{(4)}$ denotes the set of zeroes of
 $\omega_{(4)}$. Note that $\#\Sigma_{(3)}=4$ and $\#\Sigma_{(4)}=3$, so that $\omega_{(3)}$ is an Abelian differential in the stratum
 $\mathcal M_{(1,1,1,1)}$ and $\omega_{(4)}$ is an Abelian differential in the stratum\footnote{This stratum has two connected components
 distinguished by the parity of the spin structure (see~\cite{KZ}). In particular, one can ask about the connected component of
 Forni-Matheus's surface. In the Appendix~\ref{a.g4parity} below, we'll use a square-tiled representation of this example to show that its
 spin structure is \emph{even}. } $\mathcal M_{(2,2,2)}$.
\end{remark}

\begin{remark}An unpublished work of Martin M\"oller~\cite{Moller} indicates that such examples with totally degenerate KZ spectrum are
 very rare: they don't exist in genus $g\geq 6$, Forni's example is the unique totally degenerate $SL(2,\mathbb{R})$-orbit in genus 3
 and the Forni-Matheus example is the unique totally degenerate $SL(2,\mathbb{R})$-orbit in genus 4; also, the sole stratum in genus 5
 \emph{possibly} supporting a totally degenerate $SL(2,\mathbb{R})$-orbit is $\mathcal M_{(2,2,2,2)}$, although this isn't
 \emph{probably} the case
 (namely, M\"oller pursued a computer program search and it seems that the possible exceptional case $\mathcal M_{(2,2,2,2)}$ can be
 ruled out).
\end{remark}

\begin{remark}
Forni's version of Kontsevich formula for the sum of the Lyapunov exponents reveals the following interesting feature of the
 Kontsevich-Zorich cocycle over a totally degenerate $SL(2,\mathbb{R})$: it is \emph{isometric} with respect to the \emph{Hodge norm}
 on the cohomology $H^1(M,\mathbb{R})$ on the orthogonal complement of the subspace associated to the exponents $\pm 1$. For more details
 see~\cite{Forni} and~\cite{ForniSurvey}. Observe that this fact is far from trivial \emph{in general} since the presence of zero
 Lyapunov exponents only indicates a subexponential (e.g., polynomial) divergence of the orbits (although in the specific case of the
 KZ cocycle, Forni manages to show that this ``subexponential divergence'' suffices to conclude there is no divergence at all).
\end{remark}

\subsection{Statement of the results}

We start with $(M_{3},\omega_{(3)})$. For this square-tiled surface, the Veech group is the full group $SL(2,\mathbb Z)$ and the automorphism group is the $8$-element quaternion group $Q := \{ \pm 1, \pm i, \pm j, \pm k \}$: see F. Herrlich and G. Schmith\"usen~\cite{HS} (and also Figure~\ref{Forni-origami} below). We have tried to summarize the main conclusions of the computations of the next section.

\begin{theorem}\label{t.A}
\begin{enumerate}
\item One has a decomposition
$$H_1(M_3,\Sigma_{(3)},\mathbb{Q})= H_1^{st} \oplus H_1^{(0)} \oplus H_{rel}$$ 
into $\mathbb Q$-defined $\textrm{Aff}(M_{3},\omega_{(3)})$-invariant subspaces. The action of $\textrm{Aff}(M_{3},\omega_{(3)})$ on $H_{rel}$ is through the group $S_4$ of permutations of the zeros of $\omega_{(3)}$.
\item There exists a root system $R$ of $D_4$ type spanning $H_1^{(0)}$ which is invariant under the action of $\textrm{Aff}(M_{3},\omega_{(3)})$. The action of $\textrm{Aff}(M_{3},\omega_{(3)})$ on $H_1^{(0)}$ is thus given by a homomorphism $Z$ of $\textrm{Aff}(M_{3},\omega_{(3)})$ to the automorphism group $A(R)$ of $R$. The image of this homomorphism is a subgroup of $A(R)$ of order $96$.
\item The inverse image in $\textrm{Aff}(M_{3},\omega_{(3)})$ of the Weyl group $W(R)$ is equal to the inverse image of the principal congruence subgroup $\Gamma(2)$ by the canonical morphism from $\textrm{Aff}(M_{3},\omega_{(3)})$ to $SL(M_{3},\omega_{(3)}) = 
SL(2,\mathbb{Z})$. The morphism from $\textrm{Aff}(M_{3},\omega_{(3)})$  to $A(R) / W(R)=S_3$ induced by $Z$ is onto.
\item The intersection of the image of $Z$ with $W(R)$ is the subgroup of order $16$ formed by those elements of $W(R)$ which preserve the intersection form on $H_1^{(0)}$.
\item The intersection of the kernel of $Z$ with the kernel of the action of $\textrm{Aff}(M_{3},\omega_{(3)})$ on $H_{rel}$ is sent isomorphically onto the principal congruence subgroup $\Gamma(4)$ by the canonical morphism from $\textrm{Aff}(M_{3},\omega_{(3)})$ to $ SL(2,\mathbb{Z})$.
\end{enumerate}
\end{theorem}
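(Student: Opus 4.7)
My plan is to establish this theorem through an explicit basis-level computation on Forni's origami. The first step is to fix a combinatorial model of $(M_3,\omega_{(3)})$ as an $8$-square surface following Herrlich-Schmith\"usen: label the squares by the elements of the quaternion group $Q$ so that the right and up permutations are right multiplication by $i$ and $j$, and the four zeros $\Sigma_{(3)}$ can be read off the vertex identifications. The action of $\textrm{Aut}(M_3,\omega_{(3)})=Q$ is then left multiplication, and one picks specific affine lifts of the standard generators $T=\left(\begin{smallmatrix}1&1\\0&1\end{smallmatrix}\right)$ and $S=\left(\begin{smallmatrix}0&-1\\1&0\end{smallmatrix}\right)$ of the Veech group $SL(2,\mathbb{Z})$. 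These finitely many elements generate $\textrm{Aff}(M_3,\omega_{(3)})$ and will carry the whole computation.

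The second step is to choose a basis of the $9$-dimensional space $H_1(M_3,\Sigma_{(3)},\mathbb{Q})$ adapted to the proposed decomposition: the pair $\sigma,\zeta$ for $H_1^{st}$ as in Proposition~\ref{p.1}; three relative cycles joining a base zero to each of the other three zeros for $H_{rel}$; and four absolute cycles projecting to $0$ in $H_1(\mathbb{R}^2/\mathbb{Z}^2,\mathbb{Q})$, built from appropriate differences $\sigma_i-\sigma_j$ and $\zeta_i-\zeta_j$, for $H_1^{(0)}$. I would then write down the matrices of all generators in this basis. That $H_1^{st}$ and $H_1^{(0)}$ are invariant is already established; invariance of $H_{rel}$, which is the content of a nonobvious splitting (compare the remark referring to Appendix~\ref{a.supplement}), must be verified by direct inspection of these matrices. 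At the same time one reads off the block acting on $H_{rel}$ and identifies it with the permutation representation of the affine action on $\Sigma_{(3)}$, yielding part~(1).

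The heart of the argument is part~(2). In the four-dimensional space $H_1^{(0)}$ I would seek a finite $\textrm{Aff}$-invariant set of $24$ vectors arranged as a $D_4$ root system $R$. A natural candidate is the $\textrm{Aff}$-orbit of a short primitive class such as the difference of two well-chosen squares that closes up in the cover; the hope is that this orbit consists of exactly the $24$ long and short roots of $D_4$ up to a common scaling. Once $R$ is exhibited and shown to be invariant, one obtains the homomorphism $Z:\textrm{Aff}(M_3,\omega_{(3)})\to A(R)$. To pin down the image, I would use that $|A(R)|=1152$, $|W(R)|=192$ and $A(R)/W(R)=S_3$ is the triality group. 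The subgroup of $W(R)$ preserving the symplectic intersection form on $H_1^{(0)}$ has order $16$, and part~(4) asserts that $Z$ hits exactly this subgroup inside $W(R)$. Checking this and surjectivity onto the triality $S_3$ then yields $|\textrm{Im}\,Z|=16\cdot 6=96$.

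Finally, parts~(3) and~(5) amount to matching the computed image with known congruence subgroups of $SL(2,\mathbb{Z})$. One uses the classical isomorphism $SL(2,\mathbb{Z})/\Gamma(2)\simeq S_3$ and checks that the composition $\textrm{Aff}(M_3,\omega_{(3)})\to SL(2,\mathbb{Z})\to S_3$ agrees with $\textrm{Aff}(M_3,\omega_{(3)})\xrightarrow{Z} A(R)\to A(R)/W(R)=S_3$; the preimage of $W(R)$ is then the preimage of $\Gamma(2)$. For part~(5), after describing the intersection of the kernel of $Z$ with the kernel of the action on $H_{rel}$, I would show its image in $SL(2,\mathbb{Z})$ is exactly $\Gamma(4)$ by exhibiting affine lifts of generators of $\Gamma(4)$ acting trivially on both pieces and by matching indices against the known order $96$. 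I expect the main obstacle to be the explicit identification of the $D_4$ root system in $H_1^{(0)}$; once $R$ is pinned down, the remaining group-theoretic bookkeeping reduces to orderly finite computations.
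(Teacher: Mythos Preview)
Your plan is essentially the paper's own approach: explicit computation in the quaternion model, choice of adapted generators of the affine group, and identification of a $D_4$ root system in $H_1^{(0)}$ followed by group-theoretic bookkeeping against $\Gamma(2)$ and $\Gamma(4)$. Two small cautions: $D_4$ is simply-laced, so there are no ``long and short'' roots---all $24$ have the same length; and a naive basis of ``three relative cycles joining a base zero to the others'' will typically \emph{not} span an $\textrm{Aff}$-invariant complement, so expect to have to adjust it (the paper builds $H_{rel}$ from specific $Q$-eigenvectors $w_i,w_j,w_k$ read off the character table of $Q$). For locating $R$, the paper's device is more direct than an orbit search: it sets $\widehat\sigma_g:=p(\sigma_g-\sigma_{-g})$ and $\varepsilon_g:=\widehat\sigma_g-\widehat\sigma_{gj}$ for $g\in Q$, and these four vectors (with $\varepsilon_{-g}=-\varepsilon_g$) behave like an orthonormal frame, so that $R=\{\varepsilon_g+\varepsilon_h:g,h\in Q,\ g\ne\pm h\}$ is visibly of type $D_4$ and the action of $\widetilde S,\widetilde T,Q$ on the $\varepsilon_g$ can be tabulated by hand.
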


\begin{remark} Avila and Hubert communicated to the authors that they  checked that the action of generators of $\textrm{Aff}(M_{3},\omega_{(3)})$ on 
$H_1^{(0)}$ was through matrices of finite order. One of the referees also brought our attention to the PhD thesis of Oliver Bauer~\cite{Bauer}, who does some computations on the action of the affine group similar to ours.
\end{remark}

We now consider $(M_{4},\omega_{(4)})$. For this square-tiled surface, we will see that the Veech group is the full group $SL(2,\mathbb Z)$ and the automorphism group is the cyclic group $\mathbb Z /3$.

\begin{theorem}\label{t.B}
\begin{enumerate}
\item One has  decompositions
$$H_1(M_4,\Sigma_{(4)},\mathbb{Q})= H_1^{st} \oplus H_1^{(0)} \oplus H_{rel},$$ 
$$H_1^{(0)} = H_{\tau} \oplus \breve H$$
into  $\mathbb Q$-defined $\textrm{Aff}(M_{4},\omega_{(4)})$-invariant subspaces. The action of $\textrm{Aff}(M_{4},\omega_{(4)})$ on $H_{rel}$ is through the group $S_3$ of permutations of the zeros of $\omega_{(4)}$.
\item The subspace $H_{\tau}$ is $2$-dimensional and the action of $\textrm{Aff}(M_{4},\omega_{(4)})$ on it is through a homomorphism to the cyclic group $\mathbb Z /6$ (acting by rotations).
\item The subspace $\breve H$ is $4$-dimensional and it splits over $\mathbb C$ into two $\textrm{Aff}(M_{4},\omega_{(4)})$-invariant subspaces of dimension 2.
\item There exists a root system $R$ of $D_4$ type spanning $\breve H$ which is invariant under the action of $\textrm{Aff}(M_{4},\omega_{(4)})$. The action of $\textrm{Aff}(M_{4},\omega_{(4)})$ on $\breve H$ is thus given by a homomorphism $Z$ of $\textrm{Aff}(M_{4},\omega_{(4)})$ to the automorphism group $A(R)$ of $R$. The image of this homomorphism is a subgroup of $A(R)$ of order $72$.
\item The inverse image in $\textrm{Aff}(M_{4},\omega_{(4)})$ of the Weyl group $W(R)$ is sent isomorphically onto the Veech group $SL(M_{4},\omega_{(4)}) = SL(2,\mathbb{Z})$ by the canonical morphism from the affine group to the Veech group. The image of the morphism from $\textrm{Aff}(M_{4},\omega_{(4)})$  to $A(R) / W(R)=S_3$ is the cyclic subgroup of index $2$.
\item The intersection of the image of $Z$ with $W(R)$ is the subgroup of order $24$, isomorphic to $SL(2,\mathbb Z /3)$, formed by those elements of $W(R)$ which preserve the intersection form on $\breve H$.
\item The kernel of $Z$ is sent isomorphically onto the principal congruence subgroup $\Gamma(3)$ by the canonical morphism from the affine group to the Veech group.

\end{enumerate}
\end{theorem}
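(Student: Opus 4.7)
The plan is to carry out explicit matrix computations on a square-tiled presentation of $(M_4,\omega_{(4)})$. First, I would describe the origami as a finite set of squares with right/up permutations $r,u$, locate the three zeros of $\omega_{(4)}$ among the vertex orbits, and make explicit the order-$3$ automorphism $\tau$ coming from $(z,w)\mapsto(z,\zeta_3 w)$ in the model~\eqref{e.ForniM}, which preserves $\omega_{(4)}=z\,dz/w^3$ because $\zeta_3^3=1$. The Veech group is identified with $SL(2,\mathbb Z)$ by the standard procedure of lifting generators of $SL(2,\mathbb Z)$ to affine homeomorphisms; the parabolic generators lift because the horizontal and vertical cylinder decompositions have integer moduli with greatest common divisor $1$. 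Together with $\tau$, these lifts $T$ and $T'$ generate $\textrm{Aff}(M_4,\omega_{(4)})$, and a direct check shows that $\tau$ commutes with $T$ and $T'$, hence is central in $\textrm{Aff}$.

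Next, I would fix a $\mathbb Q$-basis of $H_1(M_4,\Sigma_{(4)},\mathbb Q)$ adapted to the squares (using the classes $\sigma_i,\zeta_i$ modulo the boundary relations around the zeros) and compute the matrices of $T$, $T'$, $\tau$ on this basis. Proposition~\ref{p.1} gives $H_1^{st}$; inside the kernel of the projection to $H_1(\mathbb R^2/\mathbb Z^2,\{0\},\mathbb Q)$ one separates $H_1^{(0)}$ (the image of $H_1(M_4,\mathbb Q)$) from a canonical $\textrm{Aff}$-invariant supplement $H_{rel}$ realised by combinations of small loops around the three zeros with zero total homology class; on $H_{rel}$ the action visibly factors through permutations of $\Sigma_{(4)}$, giving (1). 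Since $\tau$ is central and acts semisimply on the six-dimensional space $H_1^{(0)}$, the fixed subspace $H_\tau:=\mathrm{Fix}(\tau^*)\subset H_1^{(0)}$ and its canonical complement $\breve H=\ker(1+\tau^*+\tau^{*2})$ are both $\textrm{Aff}$-invariant, and a dimension count yields $\dim H_\tau=2$ and $\dim\breve H=4$. Over $\mathbb C$, $\breve H$ further splits as the sum of the two eigenspaces of $\tau^*$ for the primitive cube roots of unity, each of dimension $2$; they remain $\textrm{Aff}$-invariant because $\tau$ is central, giving (3).

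For (2) and (4), I would compute directly the matrix representations of $T$, $T'$, $\tau$ on $H_\tau$ and on $\breve H$. On $H_\tau$ the images of $T$ and $T'$ are elements of finite order generating a cyclic rotation group of order $6$, which yields (2). On $\breve H$, the matrices of $T$, $T'$, $\tau$ all have finite order and preserve the integral lattice $\breve H\cap H_1(M_4,\mathbb Z)$; averaging the intersection form over the finite group they generate (the finiteness being verified by enumeration) produces an $\textrm{Aff}$-invariant positive-definite quadratic form on $\breve H$, and the shortest nonzero vectors of the lattice with respect to this form make up a set of $24$ elements forming a root system of type $D_4$. The homomorphism $Z:\textrm{Aff}\to A(R)$ is then the action map, and its image is computed by enumerating the subgroup of $A(R)$ generated by the images of $T$, $T'$, $\tau$: the count gives order $72$.

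Finally, parts (5), (6), (7) follow by bookkeeping with the explicit matrix images. The elements of $W(R)$ that preserve the intersection form on $\breve H$ make up a subgroup of order $24$, recognised as $SL(2,\mathbb Z/3)$ via the modular interpretation of the Veech-group images, giving (6); the section $T\mapsto T$, $T'\mapsto T'$ identifies $SL(2,\mathbb Z)\hookrightarrow\textrm{Aff}$ with the inverse image of $W(R)$, yielding (5); tracing the reduction modulo $3$ through the matrix images identifies $\ker Z$ with the principal congruence subgroup $\Gamma(3)\subset SL(2,\mathbb Z)$, giving (7). The main obstacle will be the construction of the $D_4$ root system in (4): one must locate the $24$ short vectors inside the correct $\textrm{Aff}$-invariant lattice in $\breve H$ and verify that they realise exactly the $D_4$ intersection pattern. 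Once this is settled, (5)--(7) reduce to finite-group arithmetic inside $A(R)$.
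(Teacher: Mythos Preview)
Your overall strategy---present the origami, compute the action of generators on explicit homology classes, then read off the invariant pieces---matches the paper. But there is a genuine gap in how you propose to obtain the splitting $H_1^{(0)} = H_\tau \oplus \breve H$.

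You define $H_\tau := \mathrm{Fix}(\tau^*)$ and $\breve H := \ker(1+\tau^*+\tau^{*2})$ for the order-$3$ automorphism $\tau$. This does not work: $\tau^*$ has \emph{no} nonzero fixed vectors in $H_1^{(0)}$. Concretely, in the square-tiled model the automorphism group $\mathbb Z/3$ acts by cyclically shifting an index $i\in\mathbb Z/3$, and in $H_1^{(0)}$ the natural generators (the paper's $\tau_i,\breve\sigma_i,\breve\zeta_i$) each satisfy $\sum_i(\cdot)_i=0$, so the only invariant vector is zero. Equivalently, the fixed subspace of $\tau^*$ on the absolute homology $H_1(M_4,\mathbb Q)$ is exactly $H_1^{st}$, and $H_1^{(0)}$ is by definition the complement. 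Thus your recipe gives $H_\tau=0$ and $\breve H=H_1^{(0)}$, a $6$-dimensional space; the subsequent ``dimension count'' and the claim that the $\tau^*$-eigenspaces on $\breve H$ are $2$-dimensional then fail. The splitting $H_\tau\oplus\breve H$ is \emph{not} an isotypic decomposition for $\mathrm{Aut}(M_4,\omega_{(4)})$: both pieces are sums of the two nontrivial $\mathbb Z/3$-characters. In the paper the splitting is obtained by an explicit change of generators (taking symmetric and antisymmetric combinations $a_i\pm a'_{i-1}$, $b'_i\pm b_{i-1}$) and then checking invariance under $\widetilde S,\widetilde T$ by direct computation. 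If you want a conceptual substitute, use instead the affine involution with linear part $-\mathrm{Id}$ (the square of the element with linear part $J$): it acts as $+1$ on $H_\tau$ and $-1$ on $\breve H$, so its $\pm 1$-eigenspaces give the correct decomposition.

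Two smaller points. Your description of $H_{rel}$ as ``combinations of small loops around the three zeros'' lives in $H_1(M_4\setminus\Sigma_{(4)})$, not in $H_1(M_4,\Sigma_{(4)})$; the paper's $H_{rel}$ is spanned by explicit relative cycles $\sigma^\flat,\zeta^\flat$ built from edges between zeros. And for the $D_4$ root system, the paper does not average a form and search for short vectors: it simply writes down $24$ explicit vectors (the $\pm\breve\sigma_i,\pm\breve\zeta_i$ together with $\pm(\breve\sigma_i+\breve\zeta_{i-1}),\pm(\breve\sigma_i-\breve\zeta_{i+1})$), checks from the action tables that $\widetilde S,\widetilde T$ and the automorphism permute this set, and then identifies the $D_4$ structure via an explicit map $\varepsilon:(\mathbb Z/3)^2\setminus\{0\}\to\breve H$. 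This sidesteps the need to first verify finiteness by enumeration.
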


Actually, in Section 3, we study a family of square-tiled surfaces parametrized by an odd integer $q\geq 3$, the surface $(M_{4},\omega_{(4)})$ corresponding to $q=3$. Some of the computations are valid for all $q\geq 3$, but the stronger statements only hold for $q=3$.\\

In view of the observation following Definition 1.8 (in Subsection 1.3), an immediate consequence of the theorems is that, for the two 
square-tiled surfaces considered above, all Lyapunov exponents of the reduced Kontsevich-Zorich cocycle are equal to zero.

Conversely (and less trivially), M\"oller has shown (\cite{Moller}) that the action on $H_1^{(0)}$ for a totally degenerate square-tiled surface has to be through a finite group.

\subsection*{Acknowledgements} This research has been supported by the following institutions: the Coll\`ege de France, French ANR (grants 0863 petits diviseurs et resonances en g\'eom\'etrie, EDP et dynamique 0864 Dynamique dans l'espace de Teichm\"uller). We thank the Coll\`ege de France, IMPA (Rio de Janeiro), the Max-Planck Institute f\"ur Mathematik in Bonn and the Mittag-Leffler Institute for their hospitality. We are also grateful to the referees for suggestions that greatly improved the presentation of the paper.

%%%%%%%%%%%%%%%%%%%%%%%%%%%%%%%%%%%%%%%%%%%%%%%%%%%%%%%%%%%%%%%%%%%%%%%%%%%%%%%%%%%%%%%%
%%%%%%%%%%%%%%%%%%%%%%%%%%%%%%%%%%%%%%%%%%%%%%%%%%%%%%%%%%%%%%%%%%%%%%%%%%%%%%%%%%%%%%%%
%%%%%%%%%%%%%%%%%%%%%%%%%%%%%%%%%%%     Genus 3 example     %%%%%%%%%%%%%%%%%%%%%%%%%%%%%%%%%%%%%%%%%
%%%%%%%%%%%%%%%%%%%%%%%%%%%%%%%%%%%%%%%%%%%%%%%%%%%%%%%%%%%%%%%%%%%%%%%%%%%%%%%%%%%%%%%%
%%%%%%%%%%%%%%%%%%%%%%%%%%%%%%%%%%%%%%%%%%%%%%%%%%%%%%%%%%%%%%%%%%%%%%%%%%%%%%%%%%%%%%%%

\section{Proof of Theorem~\ref{t.A}}

This section is organized as follows. In Subsection 2.1, we recall the description of $(M_3,\omega_{(3)})$ as a square-tiled surface. Also, the automorphism group is identified with the quaternion group. In order to understand the action of this group on the homology, we recall in Subsection 2.2 the list of irreducible representations of the quaternion group. In Subsection 2.3, we introduce generators for $H_1(M_3,\Sigma_{(3)},\mathbb{Z})$, compute the action of the quaternion group on homology and break the homology into invariant subspaces. Generators for the affine group are chosen in Subsection 2.4, and their action on homology are computed in Subsection 2.5. This allows to identify in Subsection 2.6 a subspace $H_{rel}$ which complements $H_1(M_3,\mathbb{Q})$ in $H_1(M_3,\Sigma_{(3)},\mathbb{Q})$ and is invariant under the action of the affine group. Finally, the action of the affine group on $H_1^{(0)}$ is analyzed in Subsection 2.7. 

\subsection{The square-tiled surface $(M_3,\omega_{(3)})$}

We follow here F. Herrlich and G. Schmith\"usen \cite{HS}. The set $Sq(M_3,\omega_{(3)})$ is identified with the quaternion group $Q=\{\pm 1, \pm i, \pm j, \pm k\}$. We denote by $sq(g)$ the square corresponding to $g \in Q$. The map $r$ (for right) is $sq(g) \mapsto sq(g i)$ and the map $u$ (for up) is $sq(g) \mapsto sq(g j)$. The automorphism group 
${\rm Aut} (M_3,\omega_{(3)})$ is then canonically identified with $Q$, the element $h \in Q$ sending the square $sq(g)$ on the square $sq(h g)$. See Figure~\ref{Forni-origami} below.

\begin{figure}[!h]
\includegraphics[scale=0.3]{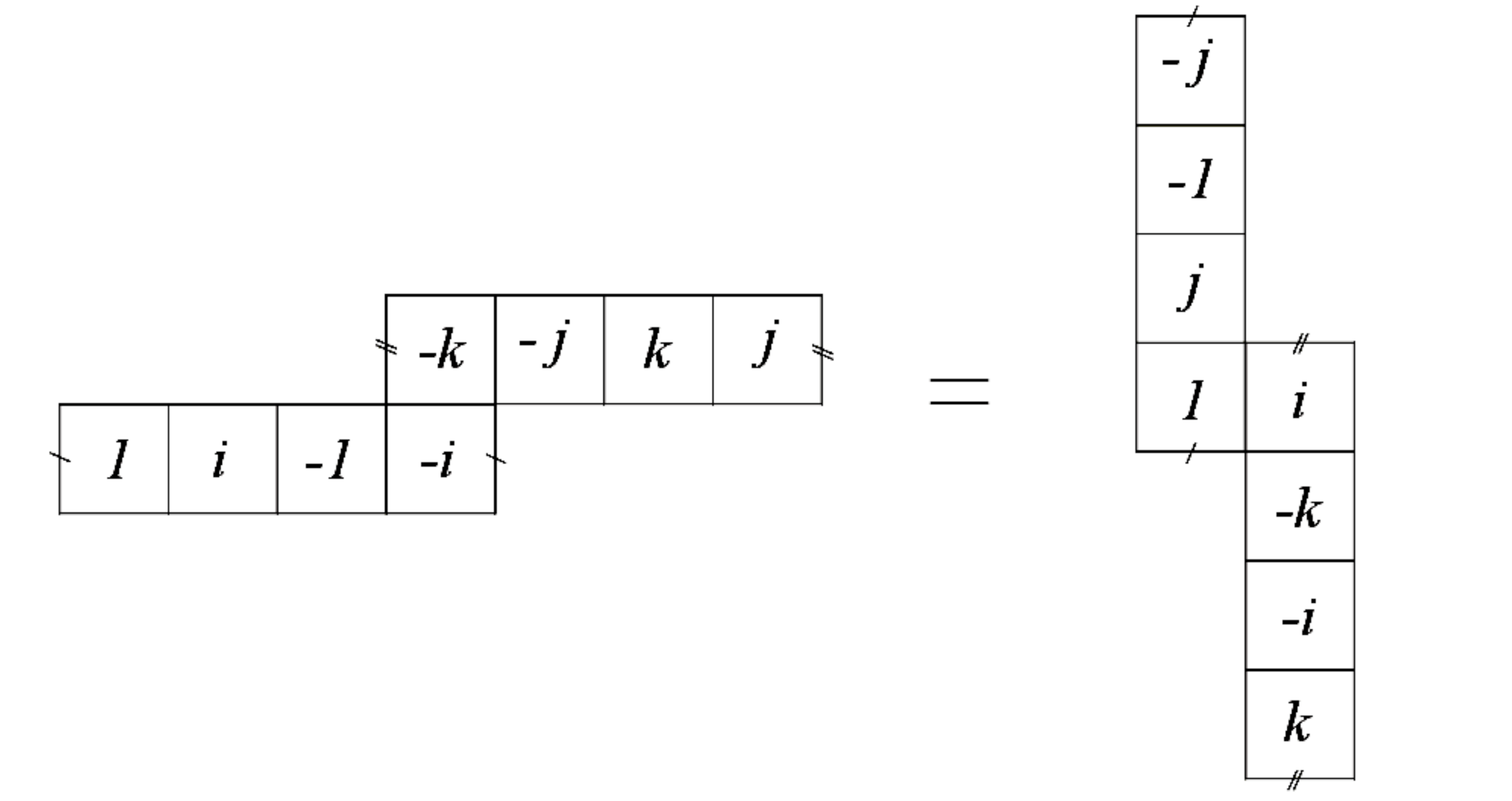}
\caption{Forni's \textit{Eierlegende Wollmilchsau}.}\label{Forni-origami}
\end{figure}

Here, it is shown the horizontal and vertical cylinder decompositions, and the right and top neighbors of each square, so that the (implicit) side identifications are easily deduced.

\medskip

We will denote by $\overline Q$ the quotient of $Q$ by its center $\{\pm 1 \}$. It is isomorphic to $\mathbb{Z}_2 \times \mathbb{Z}_2$. We denote
by $\overline 1, \;\overline i, \;\overline j, \;\overline k $ the images of $\pm 1, \pm i, \pm j, \pm k$ in $\overline Q$.

\medskip

For $g \in Q$, the lower left corners of $sq(g)$ and $sq(-g)$ correspond to the same point of $\Sigma_{(3)}$. We will identify in this way
$\Sigma_{(3)}$ with $\overline Q$.

\medskip

The map $\pi: M_3 \rightarrow \mathbb{R}^2/\mathbb{Z}^2$ factors as
$$M_3 \rightarrow \mathbb{R}^2/(2\mathbb{Z})^2 \rightarrow \mathbb{R}^2/\mathbb{Z}^2.$$
Here, the first map $\pi_1$ is a two fold covering ramified over the four points of order $2$ in $\mathbb{R}^2/(2\mathbb{Z})^2$, and may be viewed as the quotient map by the involution $-1 \in Q$ of $M_3$, the four squares of $\mathbb{R}^2/(2\mathbb{Z})^2$
being naturally labelled by $\overline Q$.

\subsection{Irreducible representations of $Q$}

The group $Q$ has 5 distinct irreducible representations, 4 one-dimensional $\chi_1,\,\chi_i,\,\chi_j,\,\chi_k$ and one $2$-dimensional $\chi_2$.
The character table is 

\begin{center}
{\renewcommand{\arraystretch}{1.5}
\renewcommand{\tabcolsep}{0.2cm}
\begin{tabular}{|c|c|c|c|c|c|}
\hline
 & $1$ & $-1$ & $\pm i$ & $\pm j$ & $\pm k$ \\
\hline
$\chi_1$ & $1$ & $1$ & $1$ & $1$ & $1$ \\
\hline
 $\chi_i$ & $1$ & $1$ & $1$ & $-1$ & $-1$\\
\hline
$\chi_j$ & $1$ & $1$ & $-1$ & $1$ & $-1$\\
\hline
$\chi_k$ & $1$ & $1$ & $-1$ & $-1$ & $1$\\
\hline
tr $\chi_2$ & $2$ & $-2$ & $0$ & $0$ & $0$\\
\hline
\end{tabular}}
\end{center}

\bigskip

In the regular representation of $Q$ in $\mathbb{Z}(Q)$, the submodules associated to these representations are generated by

\medskip

\begin{center}
{\renewcommand{\arraystretch}{1.5}
\renewcommand{\tabcolsep}{0.2cm}
\begin{tabular}{|c|c|}

\hline
$\chi_1$ & $[1]+[-1]+[i]+[-i]+[j]+[-j] +[k]+[-k]$ \\
\hline
 $\chi_i$ & $[1]+[-1]+[i]+[-i]-[j]-[-j] -[k]-[-k]$\\
\hline
$\chi_j$ & $[1]+[-1]-[i]-[-i]+[j]+[-j] -[k]-[-k]$\\
\hline
$\chi_k$ & $[1]+[-1]-[i]-[-i]-[j]-[-j] +[k]+[-k]$\\
\hline
$\chi_2$ & $[1]-[-1]$, $\;[i]-[-i]$, $\;[j]-[-j]$, $\; [k]-[-k]$\\
\hline
\end{tabular}}
\end{center}

\bigskip

\subsection{Action of  ${\rm Aut} (M_3,\omega_{(3)})$ on $H_1(M_3,\Sigma_{(3)},\mathbb{Z})$} 

We consider the direct sum $\mathbb{Z}(Q) \oplus \mathbb{Z}(Q)$ of two copies of $\mathbb{Z}(Q)$. We denote by $(\sigma_g)_{g \in Q}$ the canonical
basis of the first copy and by $(\zeta_g)_{g \in Q}$ the canonical
basis of the second copy. We define a homomorphism $p$ from $\mathbb{Z}(Q) \oplus \mathbb{Z}(Q)$ onto  $H_1(M_3,\Sigma_{(3)},\mathbb{Z})$ by sending
$\sigma_g$ on the homology class defined by the lower side of $sq(g)$ (oriented from left to right) and $\zeta_g$ on the homology class defined by the left side of $sq(g)$ (oriented from upwards)
 (see Figure~\ref{Forni-coordinates} below). The homomorphism $p$ is compatible with the actions of $Q$ on $\mathbb{Z}(Q) \oplus \mathbb{Z}(Q)$ (by the regular representation)
 and on $H_1(M_3,\Sigma_{(3)},\mathbb{Z})$ (identifying $Q$ with ${\rm Aut} (M_3,\omega_{(3)})$).
 
 \medskip

%A simple way to introduce some coordinates in $H_1(M_3,\Sigma_{(3)},\mathbb{C})$ is the following:

\begin{figure}[!h]
\centering
\includegraphics[scale=0.15]{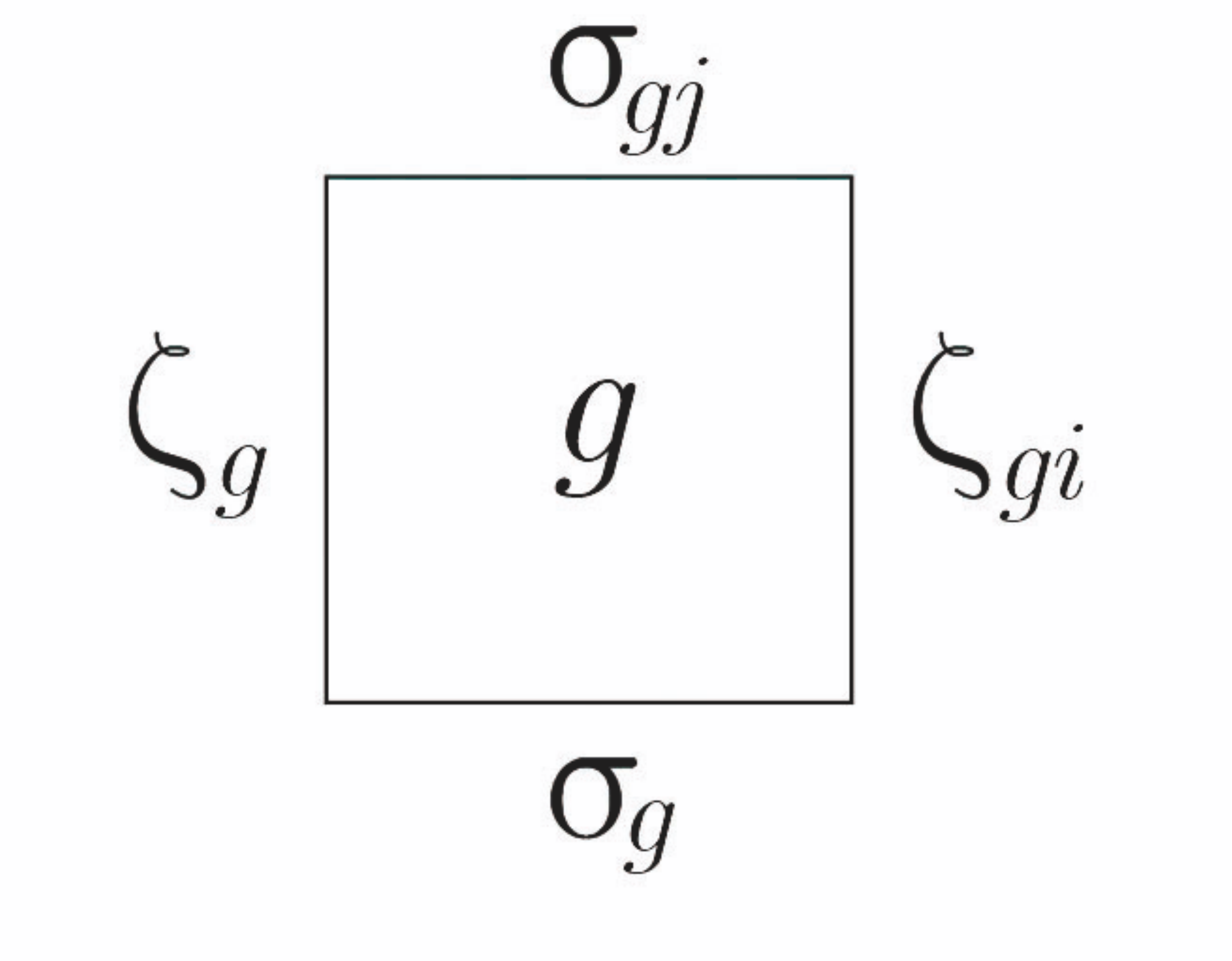}
\caption{Generators and relations for $H_1(M_{(3)},\Sigma_{(3)},\mathbb{Z})$.}\label{Forni-coordinates}
\end{figure}

%In other words, we have a map $H_1(M_3,\Sigma_{(3)},\mathbb{C})\to\mathbb{C}^{16}$ associating a vector $(\sigma_g,\zeta_g)_{g\in Q}$ to a relative homology class. Evidently, this map is not surjective because these vectors can't be independent: in fact, since the vectors appearing in figure~\ref{Forni-coordinates} are the sides of a square, they must satisfy

The kernel of the homomorphism $p$ is the submodule $Ann$ of $\mathbb{Z}(Q) \oplus \mathbb{Z}(Q)$ generated by the elements
\begin{equation}\label{e.relations-g3}
\Box_g :=\sigma_g+\zeta_{gi} - \zeta_g-\sigma_{gj}.
\end{equation}
%Thus, we have a natural identification of $H_1(M_3,\Sigma_{(3)},\mathbb{C})$ with the subspace $E^9$ of $\mathbb{C}^{16}$ of vectors $(\sigma_g,\zeta_g)_{g\in Q}$ verifying the relations~\eqref{e.relations-g3}.

We have $\sum_Q \Box_g =0$, hence $Ann$ has rank $7$. Observe that, for $g,h \in Q$ we have $h.\Box_g = \Box_{hg}$.

\bigskip

Recall that $\Sigma_{(3)}$ is identified with $\overline Q$. The boundary map $\partial: H_1(M_{(3)},\Sigma_{(3)},\mathbb{Z}) \rightarrow
\mathbb{Z}(\overline Q)$ is induced by
\begin{eqnarray*}
\partial (p(\sigma_g)) &=& \overline {gi} - \overline g, \\
\partial (p(\zeta_g)) &=& \overline {gj} - \overline g.
\end{eqnarray*}
\medskip
Let
\begin{eqnarray*}
w_i &=& p(\zeta_1 + \zeta_{-1} + \zeta_i + \zeta_{-i} -\zeta_j -\zeta_{-j} -\zeta_k -\zeta_{-k}), \\
w_j &=& p(\sigma_1 +\sigma_{-1} +\sigma_j +\sigma_{-j}-\sigma_i -\sigma_{-i}-\sigma_k -\sigma_{-k}), \\
w_k &=& p(\zeta_1 + \zeta_{-1} + \zeta_k + \zeta_{-k} -\zeta_j -\zeta_{-j} -\zeta_i -\zeta_{-i}), \\
 &=& p(\sigma_1 +\sigma_{-1} +\sigma_k +\sigma_{-k}-\sigma_i -\sigma_{-i}-\sigma_j -\sigma_{-j}). \\
\end{eqnarray*}
We note that
\begin{eqnarray*}
0 &=& p(\zeta_1 + \zeta_{-1} + \zeta_j + \zeta_{-j} -\zeta_i -\zeta_{-i} -\zeta_k -\zeta_{-k}), \\
  &=& p(\sigma_1 +\sigma_{-1} +\sigma_i +\sigma_{-i}-\sigma_j -\sigma_{-j}-\sigma_k -\sigma_{-k}). \\
\end{eqnarray*}

We have $\partial (w_i) = 4(\overline j + \overline k -\overline i -\overline 1)$ and 
\begin{equation}\label{e.rel-g3}
g.w_i = \chi_i(g) w_i
\end{equation}
\medskip
for $g \in Q$. Similar statements
hold for $w_j$ and $w_k$. Let $H_{rel}= \mathbb{Q}w_i \oplus \mathbb{Q}w_j \oplus \mathbb{Q}w_k$ be the subspace of  $H_1(M_3,\Sigma_{(3)},\mathbb{Q})$ spanned by
$w_i,\,w_j,\,w_k$. The formulas imply the following lemma:
\begin{lemma}The subspace $H_{rel}$ is invariant under the action of $\textrm{Aut}(M_3,\omega_{(3)})$ and complements $H_1(M_3,\mathbb{Q})$ in $H_1(M_3,\Sigma_{(3)},\mathbb{Q})$.
\end{lemma}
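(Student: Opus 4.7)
The lemma splits into two independent assertions, and I would handle them separately.

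\textbf{Invariance.} This is essentially read off from the formulas already in place. Equation (\ref{e.rel-g3}) says $g\cdot w_i = \chi_i(g)\,w_i$, so $\mathbb{Q}w_i$ is already a $Q$-invariant line. By the complete symmetry in the way $w_i,w_j,w_k$ are defined (each is obtained from the chosen generator of the submodule of $\mathbb{Z}(Q)\oplus\mathbb{Z}(Q)$ associated to the corresponding character $\chi_i,\chi_j,\chi_k$), the analogous identities $g\cdot w_j = \chi_j(g)\, w_j$ and $g\cdot w_k = \chi_k(g)\, w_k$ hold. Hence each of $\mathbb{Q}w_i,\mathbb{Q}w_j,\mathbb{Q}w_k$ is $Q$-stable, and so is their sum $H_{rel}$. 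Since $\mathrm{Aut}(M_3,\omega_{(3)})$ is identified with $Q$, this gives the first half.

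\textbf{Complementarity.} I would use the long exact sequence of the pair $(M_3,\Sigma_{(3)})$ in rational homology, which degenerates to
$$0 \longrightarrow H_1(M_3,\mathbb{Q}) \longrightarrow H_1(M_3,\Sigma_{(3)},\mathbb{Q}) \xrightarrow{\;\partial\;} \widetilde{H}_0(\Sigma_{(3)},\mathbb{Q}) \longrightarrow 0,$$
where the right-hand term is the augmentation kernel in $\mathbb{Q}(\overline{Q})\cong\mathbb{Q}^4$, of dimension $3$. Since $\dim H_{rel}=3$, it is enough to check that the restriction $\partial|_{H_{rel}}$ is injective; the splitting then follows from dimension count.

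\textbf{The computation that drives the argument.} From the boundary formulas $\partial p(\sigma_g)=\overline{gi}-\overline{g}$ and $\partial p(\zeta_g)=\overline{gj}-\overline{g}$, together with quaternionic relations passed to $\overline{Q}$ (e.g. $\overline{ij}=\overline{k}$, $\overline{jk}=\overline{i}$, $\overline{ki}=\overline{j}$, $\overline{-g}=\overline{g}$), one already has $\partial(w_i)=4(\overline{j}+\overline{k}-\overline{1}-\overline{i})$. The analogous routine gives $\partial(w_j)=4(\overline{i}+\overline{k}-\overline{1}-\overline{j})$ and $\partial(w_k)=4(\overline{i}+\overline{j}-\overline{1}-\overline{k})$. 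Arranging these three vectors as columns with rows indexed by $\overline{1},\overline{i},\overline{j},\overline{k}$ and deleting the first row yields the $3\times 3$ matrix with $-1$'s on the diagonal and $+1$'s elsewhere, whose determinant is $4\neq 0$. Hence $\partial(w_i),\partial(w_j),\partial(w_k)$ are linearly independent in the $3$-dimensional space $\mathrm{Im}\,\partial$, so they form a basis of it, $\partial|_{H_{rel}}$ is an isomorphism onto $\mathrm{Im}\,\partial$, and $H_{rel}\cap H_1(M_3,\mathbb{Q})=H_{rel}\cap\ker\partial=0$.

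\textbf{Main obstacle.} There is no serious obstacle; the argument is a bookkeeping exercise. The only point requiring attention is the consistent translation between quaternion multiplication, its image in $\overline{Q}\cong\mathbb{Z}/2\times\mathbb{Z}/2$, and the labeling of zeros of $\omega_{(3)}$, but this is precisely what was set up in Subsection 2.1 and in the character-theoretic description of Subsection 2.3.
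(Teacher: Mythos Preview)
Your proof is correct and follows essentially the same approach as the paper: invariance from the character formulas $g\cdot w_\ell=\chi_\ell(g)w_\ell$, and complementarity from the linear independence of $\partial(w_i),\partial(w_j),\partial(w_k)$. You have simply made explicit the details the paper leaves to the reader (the exact sequence identifying $H_1(M_3,\mathbb{Q})$ with $\ker\partial$, the boundary computations for $w_j,w_k$, and the determinant check).
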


\begin{proof}The first part is clear. The second part follows from the fact that the images of $w_i,w_j,w_k$ under $\partial$ are linearly independent.
\end{proof}

Let
\begin{eqnarray*}
\sigma &=&p(\sum_Q \sigma_g),\\
\zeta &=& p(\sum_Q \zeta_g).
\end{eqnarray*}

From Subsection 1.3, we have $H_1^{st}=\mathbb{Q}\sigma\oplus\mathbb{Q}\zeta$.

We have $ \partial(\sigma) = \partial(\zeta) =0$ and 
\begin{eqnarray}\label{e.st-g3}
g.\sigma &=& \sigma,\\
 g.\zeta &=&\zeta, 
\end{eqnarray}
\medskip
for all $g \in Q$.

\bigskip

We set, for $g \in Q$,
\begin{eqnarray*}
\widehat{\sigma}_g &=& p(\sigma_g - \sigma_{-g}),\\
\widehat{\zeta}_g &=& p(\zeta_g - \zeta_{-g}),\\
\varepsilon_g &=& \widehat{\sigma}_g-\widehat{\sigma}_{gj} =\widehat{\zeta}_{g} - \widehat{\zeta}_{gi}
\end{eqnarray*}
We have $\partial (\widehat{\sigma}_g) = \partial (\widehat{\zeta}_g) =0$. The subspace  of $H_1(M_3, \mathbb{Q})$ generated by the
$\widehat{\sigma}_g, \;\widehat{\zeta}_g, \; g\in Q$ has rank $4$ and it is the kernel $H_1^{(0)}$ of the homomorphism $\pi_*: 
H_1(M_3, \mathbb{Q}) \rightarrow H_1 ( \mathbb{R}^2 / \mathbb{Z}^2, \mathbb{Q})$ (see Subsections 1.2, 1.3).

For $g \in Q$, we have 
\begin{eqnarray}\label{e.sym-g3}
\widehat{\sigma}_{-g} &=&- \widehat{\sigma}_g,\\
\widehat{\zeta}_{-g} &=&- \widehat{\zeta}_g,\\
 \varepsilon_{-g} &=& -\varepsilon_g,
 \end{eqnarray}
and 
\begin{eqnarray}\label{e.eps-g3}
\widehat{\sigma}_{g} &=&\frac 12 ( \varepsilon_{g} +  \varepsilon_{gj}),\\
\widehat{\zeta}_{g} &=&\frac 12 ( \varepsilon_{g} +  \varepsilon_{gi}).
\end{eqnarray} 

The subspace $H_1^{(0)}$ is $Q$-invariant, being (in many ways) sum of two copies of $\chi_2$. One has, for $g,h \in Q$,
\begin{eqnarray}\label{e.aut-g3}
h.\widehat{\sigma}_{g} = \widehat{\sigma}_{hg},\\
 h.\widehat{\zeta}_{g} = \widehat{\zeta}_{hg},\nonumber\\
h.\varepsilon_g = \varepsilon_{hg}.\nonumber
\end{eqnarray}

At this stage, we have written the homology group $H_1(M_3,\Sigma_{(3)},\mathbb{Q})$ as the direct sum $H_1^{st}\oplus H_1^{(0)}\oplus H_{rel}$ of $\textrm{Aut}(M_3,\omega_{(3)})$-invariant summands.

\bigskip

\subsection{The affine group ${\rm Aff}_{(1)} (M_3,\omega_{(3)})$}Let $\textrm{Aff}_{(1)}(M_3,\omega_{(3)})$ be the stabilizer of 
$\overline 1 \in \Sigma_{(3)}$ in $\textrm{Aff}(M_3,\omega_{(3)})$. 
\begin{lemma}The subgroup $\textrm{Aff}_{(1)}(M_3,\omega_{(3)})$ has index $4$ in $\textrm{Aff}(M_3,\omega_{(3)})$. Moreover, its intersection with $Q =\textrm{Aut}(M_3,\omega_{(3)})$ is the center $Z=\{\pm 1 \}$ of $Q$ and is contained in the center of $\textrm{Aff}(M_3,\omega_{(3)})$. The subgroup $\textrm{Aff}_{(1)}(M_3,\omega_{(3)})$ is thus a central extension 
$$1 \longrightarrow Z=\{\pm 1 \} \longrightarrow \textrm{Aff}_{(1)}(M_3,\omega_{(3)}) \longrightarrow SL(M_3,\omega_{(3)})=SL(2,\mathbb{Z})
\longrightarrow 1\;.$$
\end{lemma}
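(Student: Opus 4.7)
The plan is to exploit the exact sequence $1 \to \mathrm{Aut}(M_3,\omega_{(3)}) \to \mathrm{Aff}(M_3,\omega_{(3)}) \to SL(M_3,\omega_{(3)}) \to 1$ from Subsection~1.2 together with the explicit identification $\mathrm{Aut}(M_3,\omega_{(3)}) = Q$ acting on $\Sigma_{(3)} = \overline Q$ by left multiplication through the quotient $Q \twoheadrightarrow \overline Q$.

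First I would establish the index statement. Since $Q$ already acts transitively on $\Sigma_{(3)} = \overline Q$ (it is the regular action of $\overline Q$ on itself), a fortiori the full affine group $\mathrm{Aff}(M_3,\omega_{(3)})$ acts transitively on $\Sigma_{(3)}$. The orbit of $\overline 1$ has size $4$, whence $[\mathrm{Aff}(M_3,\omega_{(3)}) : \mathrm{Aff}_{(1)}(M_3,\omega_{(3)})] = 4$. Intersecting with $Q$ amounts to computing the stabilizer of $\overline 1$ for the left-multiplication action of $Q$ on $\overline Q$, which is the kernel $\{\pm 1\} = Z$ of $Q \to \overline Q$.

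Next I would verify centrality. Because $\mathrm{Aut}(M_3,\omega_{(3)}) = Q$ is the kernel of the homomorphism $\mathrm{Aff}(M_3,\omega_{(3)}) \to SL(2,\mathbb{R})$, it is normal in $\mathrm{Aff}(M_3,\omega_{(3)})$, so conjugation by any $\phi \in \mathrm{Aff}(M_3,\omega_{(3)})$ induces an automorphism of the finite group $Q$. The element $-1$ is the unique element of order $2$ in $Q$ (the four cosets $\pm i,\pm j,\pm k$ consist of elements of order $4$), hence it is preserved by every automorphism of $Q$. Therefore $\phi(-1)\phi^{-1} = -1$ for every $\phi \in \mathrm{Aff}(M_3,\omega_{(3)})$, which places $Z = \{\pm 1\}$ in the center of $\mathrm{Aff}(M_3,\omega_{(3)})$, and a fortiori in the center of $\mathrm{Aff}_{(1)}(M_3,\omega_{(3)})$.

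Finally, to obtain the central extension, restrict the global exact sequence to $\mathrm{Aff}_{(1)}(M_3,\omega_{(3)})$. Its kernel is $\mathrm{Aff}_{(1)}(M_3,\omega_{(3)}) \cap Q = Z$ by the previous step. Surjectivity onto $SL(M_3,\omega_{(3)}) = SL(2,\mathbb{Z})$ follows from a standard adjustment: given $A \in SL(2,\mathbb{Z})$, lift it to some $\phi \in \mathrm{Aff}(M_3,\omega_{(3)})$; if $\phi(\overline 1) = \overline g$, then by transitivity of $Q$ on $\overline Q$ there exists $h \in Q$ with $h\overline g = \overline 1$, and $h\phi \in \mathrm{Aff}_{(1)}(M_3,\omega_{(3)})$ still projects to $A$ because $h \in \mathrm{Aut}(M_3,\omega_{(3)})$ has trivial image in the Veech group. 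The only delicate point in the whole argument is the centrality claim, and its whole content is the elementary observation that $-1$ is the sole involution of $Q$; everything else is routine bookkeeping with the exact sequence.
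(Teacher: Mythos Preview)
Your proof is correct and follows essentially the same structure as the paper's: transitivity of $Q$ on $\Sigma_{(3)}$ gives the index, the stabilizer computation gives $Q\cap\mathrm{Aff}_{(1)}=Z$, and restricting the global exact sequence finishes the job. The only genuine difference is in the centrality step. The paper argues geometrically: for $f\in\mathrm{Aff}(M_3,\omega_{(3)})$ and $g=-1$, the commutator $f^{-1}g^{-1}fg$ lies in $\mathrm{Aut}(M_3,\omega_{(3)})$ by normality and fixes $\Sigma_{(3)}$ pointwise (since $-1$ already does), hence lies in $\{\mathrm{id},-1\}$; the value $-1$ is then ruled out because it would force $g=\mathrm{id}$. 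Your argument instead observes that $-1$ is the unique involution of $Q$ and is therefore fixed by every automorphism of $Q$, in particular by conjugation in $\mathrm{Aff}(M_3,\omega_{(3)})$. Your route is slightly cleaner and more structural; the paper's is more self-contained in that it does not appeal to the full automorphism group of $Q$.
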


\begin{proof}The group $\textrm{Aff}(M_3,\omega_{(3)})$ acts transitively on $\Sigma_{(3)}$, hence $\textrm{Aff}_{(1)}(M_3,\omega_{(3)})$ has index $4$. The intersection of $\textrm{Aff}_{(1)}(M_3,\omega_{(3)})$ with $Q$ is clearly the center $Z=\{\pm1\}$ of $Q$. Let $f\in\textrm{Aff}(M_3,\omega_{(3)})$ and $g:=-1$; then, $f^{-1}g^{-1}fg$ belongs to $\textrm{Aut}(M_3,\omega_{(3)})$ and fixes each point of $\Sigma_{(3)}$, hence is equal to $id$ or $g$; but the second possibility would imply that $f^{-1}g^{-1}f=id$. Thus, $-1$ belongs to the center $\textrm{Aff}(M_3,\omega_{(3)})$.

Finally, the sequence is exact because the intersection of $Q$ with $\textrm{Aff}_{(1)}(M_3,\omega_{(3)})$ is precisely $Z$.
\end{proof}

\begin{remark}We will see below that the center of $\textrm{Aff}(M_3,\omega_{(3)})$ is cyclic of order $4$.
\end{remark}

\medskip

We choose as generators of $SL(2,\mathbb{Z})$  

$$S = \left( \begin{array}{cc} 1 & 0 \\  1 & 1 \\  \end{array} \right)  \quad \quad \textrm{and}\quad\quad T = \left( \begin{array}{cc} 1 & 1 \\  0 & 1 \\  \end{array} \right). $$

We denote by $\widetilde{S}$, $\widetilde{T}$ the elements of $\textrm{Aff}_{(1)}(M_3,\omega_{(3)})$ with derivatives $S$, $T$ respectively
such that $sq(1)$ intersects $\widetilde{S}(sq(1))$ and $\widetilde{T}(sq(1))$. We have 
$$(\widetilde{S} \widetilde{T}^{-1}\widetilde{S})^4 = -1$$
and therefore $\widetilde{S}$, $\widetilde{T}$ form a system of generators for $\textrm{Aff}_{(1)}(M_3,\omega_{(3)})$.

\subsection{The action of $\widetilde{S}$, $\widetilde{T}$ on $H_1(M_3,\Sigma_{(3)},\mathbb{Z})$}

One easily checks that
\begin{equation}\label{e.S1-g3}
\widetilde{S}(p(\zeta_g))=\left\{
\begin{array}{cc}
p(\zeta_g) & \text{if } g\in\{\pm1,\pm j\},\\
p(\zeta_{jg}) & \text{if } g\in\{\pm i,\pm k\},
\end{array} \right.
\end{equation}

\begin{equation}\label{e.S2-g3}
\widetilde{S}(p(\sigma_g))=\left\{
\begin{array}{cc}
p(\sigma_g + \zeta_{gi}) & \text{if } g\in\{\pm1,\pm j\},\\
p(\sigma_{jg} + \zeta_{gk}) & \text{if } g\in\{\pm i,\pm k\},
\end{array} \right.
\end{equation}

\begin{equation}\label{e.T1-g3}
\widetilde{T}(p(\sigma_g))=\left\{
\begin{array}{cc}
p(\sigma_g) & \text{if } g\in\{\pm1,\pm i\},\\
p(\sigma_{ig}) & \text{if } g\in\{\pm j,\pm k\},
\end{array} \right.
\end{equation}

\begin{equation}\label{e.T2-g3}
\widetilde{T}(p(\zeta_g))=\left\{
\begin{array}{cc}
p(\zeta_g + \sigma_{gj}) & \text{if } g\in\{\pm1,\pm i\},\\
p(\zeta_{ig} + \sigma_{-gk}) & \text{if } g\in\{\pm j,\pm k\}.
\end{array} \right.
\end{equation}

From these formulas, we deduce
\begin{eqnarray}\label{e.st-g3}
\widetilde{S}(\sigma) & =\sigma + \zeta, \hspace{2cm} \widetilde{S}(\zeta) & = \zeta,\\
 \widetilde{T}(\zeta) & =\sigma + \zeta, \hspace{2cm} \widetilde{T}(\sigma) & = \sigma,\nonumber
 \end{eqnarray}
 
 \bigskip
\noindent and also

\begin{alignat}{4}\label{rel-g3}
\widetilde{S}(w_i) & =w_k, \hspace{15mm} \widetilde{S}(w_j) & = w_j, \hspace{15mm} \widetilde{S}(w_k) & = w_i,\\ 
\widetilde{T}(w_i) & =w_i, \hspace{15mm} \widetilde{T}(w_j) & = w_k, \hspace{15mm} \widetilde{T}(w_k) & = w_j. \nonumber
\end{alignat}

\bigskip

Finally, we have

\begin{equation}\label{e.S3-g3}
\widetilde{S}(\wh\zeta_g)=\left\{
\begin{array}{cc}
\wh\zeta_g & \text{if } g\in\{\pm1,\pm j\},\\
\wh\zeta_{jg} & \text{if } g\in\{\pm i,\pm k\},
\end{array} \right.
\end{equation}

\begin{equation}\label{e.S4-g3}
\widetilde{S}(\wh\sigma_g)=\left\{
\begin{array}{cc}
\wh\sigma_g + \wh\zeta_{gi} & \text{if } g\in\{\pm1,\pm j\},\\
\wh\sigma_{jg} +\wh\zeta_{gk} & \text{if } g\in\{\pm i,\pm k\},
\end{array} \right.
\end{equation}

\begin{equation}\label{e.S5-g3}
\widetilde{S}(\varepsilon_g)=\left\{
\begin{array}{cc}
\frac 12 ( \varepsilon_{g} + \varepsilon_{gi} + \varepsilon_{gj} + \varepsilon_{gk})& \text{if } g\in\{\pm1,\pm j\},\\
\frac 12 ( \varepsilon_{g} - \varepsilon_{gi} - \varepsilon_{gj} + \varepsilon_{gk}) & \text{if } g\in\{\pm i,\pm k\},
\end{array} \right.
\end{equation}

\begin{equation}\label{e.T3-g3}
\widetilde{T}(\wh\sigma_g)=\left\{
\begin{array}{cc}
\wh\sigma_g & \text{if } g\in\{\pm1,\pm i\},\\
\wh\sigma_{ig} & \text{if } g\in\{\pm j,\pm k\},
\end{array} \right.
\end{equation}

\begin{equation}\label{e.T4-g3}
\widetilde{T}(\wh\zeta_g)=\left\{
\begin{array}{cc}
\wh\zeta_g + \wh\sigma_{gj} & \text{if } g\in\{\pm1,\pm i\},\\
\wh\zeta_{ig} + \wh\sigma_{-gk} & \text{if } g\in\{\pm j,\pm k\},
\end{array} \right.
\end{equation}

\begin{equation}\label{e.T5-g3}
\widetilde{T}(\varepsilon_g)=\left\{
\begin{array}{cc}
\frac 12 ( \varepsilon_{g} + \varepsilon_{gi} + \varepsilon_{gj} - \varepsilon_{gk})& \text{if } g\in\{\pm1,\pm i\},\\
\frac 12 ( \varepsilon_{g} - \varepsilon_{gi} - \varepsilon_{gj} - \varepsilon_{gk}) & \text{if } g\in\{\pm j,\pm k\}.
\end{array} \right.
\end{equation}

\bigskip

%In the sequel, taking these formulas into account, we'll introduce a nice decomposition of $E^9$ into three subspaces $H_0$, $H_1$ and $F$.

\subsection{The subspaces $H_1^{st}$ and $H_{rel}$}

As mentioned in Subsection 1.3,
the action on $H_1^{st}$ of the affine group is through the homomorphism from the affine group to $SL(M_3,\omega_{(3)})=SL(2,\mathbb{Z})$ 
and the standard action of  $SL(2,\mathbb{Z})$ on $\mathbb{Q}\sigma \oplus \mathbb{Q}\zeta$.

\bigskip

The formulas \eqref{rel-g3} show that $H_{rel}$ is invariant under $\widetilde{S}$ and $\widetilde{T}$.

We introduce the congruence subgroup of $SL(2,\mathbb{Z})$
$$\Gamma(2):=\left\{M\equiv \textrm{Id}_{SL(2,\mathbb{Z})} \, \textrm{mod}\, 2\right\},$$
and denote by $\widetilde \Gamma(2)$ the inverse image of $\Gamma(2)$ in ${\rm Aff}_{(1)} (M_3,\omega_{(3)})$. We introduce also 
\begin{eqnarray*}
\wh w(\overline 1) &=& w_i+w_j+w_k,\\
\wh w(\overline i) &=& w_i-w_j-w_k,\\
\wh w(\overline j) &=& -w_i+w_j-w_k,\\
\wh w(\overline k) &=& -w_i-w_j+w_k.
\end{eqnarray*}
These four points are the vertices of a regular tetrahedron in $H_{rel}$.

\begin{lemma}\begin{enumerate}
\item The sequence $$1 \longrightarrow \widetilde \Gamma(2)  \longrightarrow {\rm Aff} (M_3,\omega_{(3)}) \longrightarrow S_4 \longrightarrow 1$$ is exact, where the morphism from ${\rm Aff} (M_3,\omega_{(3)})$ to $S_4$ is through the action of ${\rm Aff} (M_3,\omega_{(3)})$ on $\Sigma_{(3)}$.
\item The subspace $H_{rel}$ is invariant under the action of ${\rm Aff} (M_3,\omega_{(3)})$. The subgroup $\widetilde \Gamma(2)$ is exactly the kernel of the representation of
${\rm Aff} (M_3,\omega_{(3)})$ in $GL(H_{rel},\mathbb{Q})$. The affine group acts on $H_{rel}$ through the symmetry group of the tetrahedron with vertices $\{\wh w(\overline 1), \wh w(\overline i), \wh w(\overline j), \wh w(\overline k)\}$.
\end{enumerate}
\end{lemma}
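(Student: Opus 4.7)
The plan is to prove both parts via a single homomorphism $\rho : \textrm{Aff}(M_3,\omega_{(3)}) \to S_4$, namely the action on $\Sigma_{(3)} \cong \overline{Q}$, and to then recognize the same $\rho$ in the action on $H_{rel}$ via the permutation of the four vertices of the tetrahedron.

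For part (1), I would first observe that the subgroup $Q = \textrm{Aut}(M_3,\omega_{(3)})$ acts on $\Sigma_{(3)} = \overline{Q}$ by left multiplication, which embeds $\overline{Q}$ into $S_4$ as the Klein four-group $V_4$ of double transpositions. Any $f \in \textrm{Aff}_{(1)}(M_3,\omega_{(3)})$ fixes $\overline{1}$ and acts on $\{\overline{i},\overline{j},\overline{k}\}$ through the reduction of its derivative modulo $2$, via $SL(2,\mathbb{Z}) \to SL(2,\mathbb{Z}/2) \cong S_3$. A direct computation with $S, T \bmod 2$ shows that $\widetilde{S}$ acts as the transposition $(\overline{i}\,\overline{k})$, fixing $\overline{1},\overline{j}$, while $\widetilde{T}$ acts as $(\overline{j}\,\overline{k})$; these two transpositions generate the stabilizer $S_3$ of $\overline{1}$ in $S_4$, and together with $V_4$ they generate all of $S_4 = V_4 \rtimes S_3$, yielding surjectivity of $\rho$. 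Conversely, $f \in \textrm{Aff}(M_3,\omega_{(3)})$ acts trivially on $\Sigma_{(3)}$ iff $f$ fixes $\overline{1}$ (hence $f \in \textrm{Aff}_{(1)}$) and the derivative of $f$ lies in $\Gamma(2)$, i.e.\ precisely when $f \in \widetilde{\Gamma}(2)$; this identifies the kernel and yields the exact sequence.

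For part (2), the invariance of $H_{rel}$ under all of $\textrm{Aff}$ follows from formula~\eqref{rel-g3} (for $\widetilde{S}, \widetilde{T}$) and from~\eqref{e.rel-g3} (each $g \in Q$ acts diagonally on $H_{rel}$ by the characters $\chi_i,\chi_j,\chi_k$). The four vectors $\hat{w}(\overline{g})$, $\overline{g}\in\overline{Q}$, sum to zero and any three are linearly independent, so they span $H_{rel}$. The key step is to check that each generator of $\textrm{Aff}$ permutes this vertex set by exactly the same permutation it induces on $\Sigma_{(3)}$ via the bijection $\overline{g} \leftrightarrow \hat{w}(\overline{g})$. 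For $g \in Q$ the character computation yields $g \cdot \hat{w}(\overline{h}) = \hat{w}(\overline{gh})$, matching left multiplication on $\overline{Q}$. For $\widetilde{S}$, formula~\eqref{rel-g3} gives $\widetilde{S}\hat{w}(\overline{1}) = \hat{w}(\overline{1})$, $\widetilde{S}\hat{w}(\overline{j}) = \hat{w}(\overline{j})$, and $\widetilde{S}\hat{w}(\overline{i}) = \hat{w}(\overline{k})$, matching the transposition from part (1); and likewise for $\widetilde{T}$. Since the vertices span $H_{rel}$, the natural $3$-dimensional representation of $S_4$ on sum-zero vectors is faithful, so the map $\textrm{Aff} \to GL(H_{rel},\mathbb{Q})$ factors as $\rho$ composed with this faithful permutation representation; the kernel on $H_{rel}$ is therefore $\widetilde{\Gamma}(2)$, and the image is the full symmetry group of the tetrahedron with vertices $\{\hat{w}(\overline{1}), \hat{w}(\overline{i}), \hat{w}(\overline{j}), \hat{w}(\overline{k})\}$.

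The main (and essentially only) computational step is the compatibility verification between the two permutation actions — on the vertices and on $\Sigma_{(3)}$ — for each generator; once this is carried out, both assertions of the lemma follow directly from part (1) together with the spanning property of the vertex set.
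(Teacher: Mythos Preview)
Your proposal is correct and follows essentially the same route as the paper's proof: surjectivity via the action of $Q$ (transitive on $\Sigma_{(3)}$) together with $\widetilde S,\widetilde T$ generating the stabilizer $S_3$ of $\overline 1$, and part~(2) via the verification that the generators permute the vertices $\wh w(\overline g)$ exactly as they permute $\Sigma_{(3)}$. The one genuine difference is in identifying the kernel: the paper checks that $\widetilde S^2,\widetilde T^2,(\widetilde S\widetilde T^{-1}\widetilde S)^2$ act trivially on $\Sigma_{(3)}$ and then uses the index count $[\mathrm{Aff}:\widetilde\Gamma(2)]=24=|S_4|$, whereas you argue that the action of $\mathrm{Aff}_{(1)}$ on $\{\overline i,\overline j,\overline k\}$ is exactly the reduction $SL(2,\mathbb Z)\to SL(2,\mathbb Z/2)\cong S_3$. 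Your route is slightly more conceptual and avoids the index computation, but it tacitly uses the factorization $M_3\to\mathbb R^2/(2\mathbb Z)^2$ (the quotient by $-1\in Q$) to identify $\Sigma_{(3)}$ with the $2$-torsion of a torus; it would be worth making that identification explicit, since otherwise the claim ``acts through the derivative mod~$2$'' is only verified on generators, which in effect reduces to the paper's argument.
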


\begin{proof}We first prove that  the homomorphism from ${\rm Aff} (M_3,\omega_{(3)})$ to $S_4$ is onto. Indeed, $\widetilde S$ fixes $\overline 1$ and $\overline j$ and exchanges $\overline i$ and $\overline k$, while $\widetilde T$ fixes $\overline 1$ and $\overline i$ and exchanges $\overline j$ and $\overline k$; therefore, ${\rm Aff}_{(1)}(M_3,\omega_{(3)})$ acts by the full permutation group of $\overline i$, $\overline j$, $\overline k$. As $Q=\textrm{Aut}(M_3,\omega_{(3)})$ acts transitively on $\Sigma_{(3)}$, the claim follows. 

We now prove that the kernel of the morphism from ${\rm Aff} (M_3,\omega_{(3)})$ to $S_4$ is $\widetilde\Gamma(2)$. Indeed, $\widetilde{S}^2$, $\widetilde{T}^2$ and $(\widetilde{S}\widetilde{T}^{-1} \widetilde S)^2$ act trivially on $\Sigma_{(3)}$. These elements generate $\widetilde\Gamma(2)$, which is therefore contained in the kernel. On the other hand, the index of $\widetilde\Gamma(2)$ in ${\rm Aff} (M_3,\omega_{(3)})$ is $24$ which is also the cardinality of $S_4$. This proves the first part of the lemma.

We have already noted that $H_{rel}$ is invariant under $\widetilde S$ and $\widetilde T$. As ${\rm Aff} (M_3,\omega_{(3)})$ is generated by $Q$ and these two elements, the first assertion of the second part follows. In fact, the formulas \eqref{e.rel-g3} and \eqref{rel-g3} show that $\widetilde S$, $\widetilde T$ and $Q$ preserve the set $\{\wh w(\overline 1), \wh w(\overline i), \wh w(\overline j), \wh w(\overline k)\}$ and act on this set as on $\Sigma_{(3)}$. This completes the proof of the lemma.
\end{proof}

It is worth mentioning that the group $S_4$ through which ${\rm Aff} (M_3,\omega_{(3)})$ acts on $H_{rel}$ is the Weyl group $W(A_3)$
of an $A_3$ root system.
\bigskip

\subsection{The action of $\textrm{Aff}(M_3,\omega_{(3)})$ on $H_1^{(0)}$}

In order to get a nice description of this action, we briefly recall the main features of $D_l$ root systems (see~\cite{Bourbaki} for proofs and further details). They are typically obtained from Lie algebras isomorphic to $\mathfrak{so}(2l)$.

\medskip

Let $l\geq 4$ and $V$ be a $l$-dimensional space equipped with a scalar product; let $(\varepsilon_m)_{1\leq m\leq l}$ be an orthonormal basis of $V$.

The set $R=\{\pm\varepsilon_m\pm\varepsilon_n, 1\leq m<n\leq l\}$ is a \emph{root system of type} $D_l$. 

For every $\alpha\in R$, let $s_{\alpha}$ be the orthogonal symmetry with respect to the hyperplane orthogonal to $\alpha$. Then, $s_\alpha(R)=R$. The subgroup of $O(V)$ generated by the $s_{\alpha}$ is the \emph{Weyl group} $W(R)$. It is isomorphic to the semi-direct product of the symmetric group $S_l$ (acting by permutation of the $\varepsilon_m$) by the group $(\mathbb Z/2)^{l-1}$ (acting by $\varepsilon_m\mapsto(\pm1)_m\cdot\varepsilon_m$ with $\prod\limits_{m}(\pm1)_m=1$).

The finite subgroup of $O(V)$ preserving $R$ is the \emph{automorphism group} of $R$ and is denoted by $A(R)$. It contains $W(R)$.

A \emph{basis} for $R$ is a family $\{\alpha_1,\dots,\alpha_l\}\subset R$ such that, for every $\alpha\in R$, either $\alpha$ or $-\alpha$ is a combination of the $\alpha_m$ with nonnegative integer coefficients. One can for instance take 
$$\alpha_1=\varepsilon_1-\varepsilon_2,\alpha_2=\varepsilon_2-\varepsilon_3,\dots,\alpha_{l-1}=\varepsilon_{l-1}-\varepsilon_l, \alpha_l = \varepsilon_{l-1}+\varepsilon_l.$$

When $l>4$, the quotient $A(R)/W(R)$ is isomorphic to $\mathbb Z/2$. When $l=4$, with $\alpha_1,\dots,\alpha_4$ as above, the quotient $A(R)/W(R)$ is isomorphic to the permutation group $S_3$ of $\{1,3,4\}$ in the following way: for every
$a \in A(R)$, there exists a unique $w \in W(R)$ and a unique permutation $\tau$ of $\{1,3,4\}$ such that
$a(\alpha_2) = w(\alpha_2)$ and $a(\alpha_m)=w(\alpha_{\tau(m)})$ for $m=1,3,4$.

\medskip

In our case, we will have $l=4$ and $V=H_1^{(0)}$ with the scalar product making $\{\varepsilon_1,\varepsilon_i,\varepsilon_j,\varepsilon_k\}$ an orthonormal basis. We take $R = \{\varepsilon_g + \varepsilon_h \,; \; g,h \in Q, \,g \ne \pm h\,\} = \{\pm\varepsilon_g \pm \varepsilon_h \,;\; g,h \in \{1,i,j,k\}, g\ne h \,\}$; they form a root system of $D_4$-type. Our preferred basis of $R$ is $\alpha_1=\varepsilon_1-\varepsilon_i$, $\alpha_2=\varepsilon_i-\varepsilon_j$, $\alpha_3=\varepsilon_j-\varepsilon_k$ and $\alpha_4=\varepsilon_j+\varepsilon_k$.

From the description of the Weyl group and \eqref {e.aut-g3} it follows that $Q$ acts on $H_1^{(0)}$ by some explicit elements of the Weyl group $W(R)$.

From \eqref{e.S5-g3}, \eqref{e.T5-g3}, $\widetilde{S}$, $\widetilde{T}$ act on $H_0$ by  elements of $A(R)$:\\

\begin{center}
{\renewcommand{\arraystretch}{1.5}
\renewcommand{\tabcolsep}{0.2cm}
\begin{tabular}{|c|c|c|}
\hline
 & $\widetilde{S}$ & $\widetilde{T}$  \\
\hline
$\alpha_1 = \varepsilon_1-\varepsilon_i$ & $\varepsilon_j+\varepsilon_k$ & $\varepsilon_1-\varepsilon_k$ \\
\hline
$\alpha_2 = \varepsilon_i-\varepsilon_{j}$ & $\varepsilon_1-\varepsilon_j$ & $\varepsilon_i-\varepsilon_1$ \\
\hline
$\alpha_3=\varepsilon_j-\varepsilon_{k}$ & $\varepsilon_j-\varepsilon_k$ & $\varepsilon_j-\varepsilon_i$ \\
\hline
$\alpha_4=\varepsilon_j+\varepsilon_{k}$ & $\varepsilon_i-\varepsilon_1$ & $\varepsilon_1+\varepsilon_k$ \\
\hline
\end{tabular}}
\end{center}

\bigskip

The action of $\textrm{Aff}(M_3,\omega_{(3)})$ on $H_1^{(0)}$ is therefore given by a homomorphism
$$Z:\textrm{Aff}(M_3,\omega_{(3)})\to A(R)$$
for which we obtain in the sequel an explicit description. Let $p$ be the canonical projection $p:A(R)\to A(R)/W(R)\simeq S_3$; consider
$$\pi:\textrm{Aff}(M_3,\omega_{(3)})\to A(R)/W(R)$$
given by $\pi=p\circ Z$. 

\begin{lemma} The homomorphism $\pi$ is surjective. Its kernel is the inverse image $\Gamma^*(2)$ of $\Gamma(2)$ under the natural homomorphism from $\textrm{Aff}(M_3,\omega_{(3)})$ to $SL(2,\mathbb Z)$. 
\end{lemma}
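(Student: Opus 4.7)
The plan is to compute the images of the generators $\widetilde S$ and $\widetilde T$ of $\textrm{Aff}_{(1)}(M_3,\omega_{(3)})$ in $A(R)/W(R)\simeq S_3$ directly from the table displaying their action on $\{\alpha_1,\alpha_2,\alpha_3,\alpha_4\}$, and then derive both assertions of the lemma from these two permutations together with a short index argument.

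First I would apply the recipe recalled just before the lemma: for each $a\in A(R)$, choose the unique $w\in W(R)$ with $w(\alpha_2)=a(\alpha_2)$, so that $\pi(a)$ is the permutation $\tau\in S_{\{1,3,4\}}$ determined by $w^{-1}a(\alpha_m)=\alpha_{\tau(m)}$. For $a=\widetilde S$ I would take $w$ to be the transposition swapping $\varepsilon_1$ and $\varepsilon_i$ (which lies in the $S_4$ subgroup of $W(D_4)$); a line-by-line check against the table gives $\pi(\widetilde S)=(1\,4)$. Similarly, choosing $w$ to swap $\varepsilon_1\leftrightarrow\varepsilon_j$ yields $\pi(\widetilde T)=(1\,3)$. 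Since these two transpositions generate $S_{\{1,3,4\}}=S_3$, the homomorphism $\pi$ is surjective.

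For the kernel, I would first observe that $Q=\textrm{Aut}(M_3,\omega_{(3)})$ is contained in $\ker\pi$, because the action of $Q$ on $H_1^{(0)}$ already factors through $W(R)$ (as noted right after \eqref{e.aut-g3}). Hence $\pi$ descends through the natural projection $p:\textrm{Aff}(M_3,\omega_{(3)})\to SL(M_3,\omega_{(3)})=SL(2,\mathbb Z)$ as $\pi=\overline\pi\circ p$, with $\overline\pi(S)=(1\,4)$ and $\overline\pi(T)=(1\,3)$. Evaluating $\overline\pi$ on the standard generators $S^2$, $T^2$ and $(ST^{-1}S)^2$ of $\Gamma(2)$ gives $(1\,4)^2=1$, $(1\,3)^2=1$ and $\bigl((1\,4)(1\,3)(1\,4)\bigr)^2=(3\,4)^2=1$, so $\Gamma(2)\subset\ker\overline\pi$; equivalently, $\Gamma^*(2)\subset\ker\pi$. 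Combining this with the surjectivity of $\overline\pi$ and the equality $[SL(2,\mathbb Z):\Gamma(2)]=6=|S_3|$, an index count forces $\ker\overline\pi=\Gamma(2)$, whence $\ker\pi=\Gamma^*(2)$.

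No step poses a serious obstacle; the only point requiring a little care is the first, where one must choose $w$ so that $w^{-1}a$ is a genuine diagram automorphism of the $D_4$ diagram rather than merely some element of $A(R)$. Once $\pi(\widetilde S)$ and $\pi(\widetilde T)$ are identified, the surjectivity is automatic and the kernel is pinned down by the elementary index comparison above.
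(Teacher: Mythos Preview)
Your proof is correct and follows essentially the same route as the paper's: both compute $\pi(\widetilde S)=(1\,4)$ and $\pi(\widetilde T)=(1\,3)$ via the same Weyl-group elements $w_S$ (swap $\varepsilon_1\leftrightarrow\varepsilon_i$) and $w_T$ (swap $\varepsilon_1\leftrightarrow\varepsilon_j$), deduce surjectivity, show $\Gamma^*(2)\subset\ker\pi$, and finish with the index argument $[SL(2,\mathbb Z):\Gamma(2)]=6=|S_3|$.

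The one small difference is in how $\Gamma^*(2)\subset\ker\pi$ is verified: the paper explicitly computes the action of $\widetilde S^2$, $\widetilde T^2$, $\widetilde{e^{i\pi}}$ on the $\varepsilon_g$ (producing a table) and observes these are signed permutations, hence in $W(R)$; you instead simply square the transpositions $(1\,4)$ and $(1\,3)$ inside $S_3$. Your shortcut is cleaner for the purposes of this lemma, but note that the paper's explicit table is not wasted work---it is reused immediately afterward to identify $Z(\widetilde S^2)$, $Z(\widetilde T^2)$, $Z(\widetilde{e^{i\pi}})$ with the specific elements $is$, $js$, $ks$ of $W(R)$, which feeds into the description of $Z(\Gamma^*(2))=\widetilde Q$.
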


\begin{proof}
Define $w_S,w_T \in W(R)$ by
$$w_S(\varepsilon_1)=\varepsilon_i,\quad w_S(\varepsilon_i)=\varepsilon_1,\quad w_S(\varepsilon_j)=\varepsilon_j,\quad w_S(\varepsilon_k)=\varepsilon_k,$$
$$w_T(\varepsilon_1)=\varepsilon_j,\quad
w_T(\varepsilon_j)=\varepsilon_1,\quad w_T(\varepsilon_i)=\varepsilon_i,\quad w_T(\varepsilon_k)=\varepsilon_k.$$
We have then
$$w_S \circ \widetilde S (\alpha_1) = \alpha_4,\quad  w_S \circ \widetilde S (\alpha_2) = \alpha_2,\quad  w_S \circ \widetilde S (\alpha_3) = \alpha_3,\quad  w_S \circ \widetilde S (\alpha_4) = \alpha_1,$$
$$w_T \circ \widetilde T (\alpha_1) = \alpha_3,\quad  w_T \circ \widetilde T (\alpha_2) = \alpha_2,\quad  w_T \circ \widetilde T (\alpha_3) = \alpha_1,\quad  w_T \circ \widetilde T (\alpha_4) = \alpha_4.$$

This shows that the images of $\widetilde S ,\widetilde T$ by $\pi$ are the transpositions $(1,4)$, $(1,3)$ respectively. Therefore $\pi$ is onto.

\medskip

 Denote by $\widetilde{e^{i\pi}}$ the element $(\widetilde{S}\widetilde{T}^{-1}\widetilde{S})^2$ of $\textrm{Aff}_{(1)}(M_3,\omega_{(3)})$; its  image in $SL(2,\mathbb Z)$ is $-\textrm{Id}\in \Gamma(2)$. The actions of $\widetilde{S}^2$, $\widetilde{T}^2$ and $\widetilde{e^{i\pi}}$ on $H_1^{(0)}$ are given by
\begin{center}
{\renewcommand{\arraystretch}{1.5}
\renewcommand{\tabcolsep}{0.2cm}
\begin{tabular}{|c|c|c|c|}
\hline
 & $\widetilde{S}^2$ & $\widetilde{T}^2$ & $\widetilde{e^{i\pi}}$  \\
\hline
$\varepsilon_1$ & $\varepsilon_i$ & $\varepsilon_j$ & $\varepsilon_k$ \\
\hline
$\varepsilon_i$ & $\varepsilon_1$ & $\varepsilon_k$ & $-\varepsilon_j$\\
\hline
$\varepsilon_j$ & $-\varepsilon_k$ & $\varepsilon_1$ & $\varepsilon_i$ \\
\hline
$\varepsilon_{k}$ & $-\varepsilon_j$ & $\varepsilon_i$ & $-\varepsilon_1$ \\
\hline
\end{tabular}}
\end{center}

\medskip

Therefore $\widetilde{S}^2$, $\widetilde{T}^2$ and $\widetilde{e^{i\pi}}$ belong to $\textrm{Ker}(\pi)$. But it is well-known that $\Gamma(2)$ is generated by $S^2,T^2$ and $-\textrm{Id}$. As the kernel of $\pi$ contains $Q$, it contains
$\Gamma^*(2)$. Finally, as the quotient $\textrm{Aff}(M_3,\omega_{(3)})/ \Gamma^*(2) \simeq SL(2, \mathbb Z) / \Gamma(2)  $
is isomorphic to $S_3$, the kernel of $\pi$ is equal to $\Gamma^*(2)$.
\end{proof}

\begin{remark}The element $\widetilde{e^{i\pi}}$ satisfies $(\widetilde{e^{i\pi}})^2=-1\in Q$. It is easily checked that it commutes with $\widetilde{S}$, $\widetilde{T}$ and the elements of $Q$. Therefore, it belongs to the center of $\textrm{Aff}(M_3,\omega_{(3)})$. The center of $\textrm{Aff}(M_3,\omega_{(3)})$ is in fact equal to the cyclic subgroup of order $4$ generated by $\widetilde{e^{i\pi}}$. Indeed, the center of $\textrm{Aff}(M_3,\omega_{(3)})$ has to be contained in the inverse image in $\textrm{Aff}(M_3,\omega_{(3)})$ of the center $\{\pm \textrm{Id}\}$ of $SL(2,\mathbb{Z})$. This inverse image is the subgroup of order 16 (isomorphic to the subgroup $\widetilde Q$ below) whose center is generated by $\widetilde{e^{i\pi}}$ (see also~\cite{HS}).
\end{remark}

\bigskip

We finally study the restriction
$$Z: \Gamma^*(2) \rightarrow W(R).$$

Let
$$\Gamma(4):=\left\{M\equiv \textrm{Id}_{SL(2,\mathbb{Z})} \, \textrm{mod}\, 4\right\}.$$

Then $\Gamma(2)/\Gamma(4)$ is isomorphic to $(\mathbb Z /2)^3$, generated by the images of $S^2,T^2$ and $-\textrm{Id}$.
Let $s \in W(R)$ be the element defined by
$$s(\varepsilon_1) = \varepsilon_1,\quad s(\varepsilon_i)=-\varepsilon_i, \quad s(\varepsilon_j)=-\varepsilon_j,\quad s(\varepsilon_k)=\varepsilon_k.$$
Identify $Q$ with its image in $W(R)$.

\begin{lemma}\begin{enumerate}
\item One has $s^2=1,\;is=-si,\;js=-sj,\;ks=sk$.
\item The subgroup $\widetilde Q$ of $W(R)$ generated by $Q$ and $s$ has order $16$ and is formed of the elements $\pm 1,\pm i,\pm j,\pm k,\pm s,\pm is,\pm js,\pm ks$.
\item There is a unique morphism from $\widetilde Q$ to $(\mathbb Z /2)^3 \simeq \Gamma(2)/\Gamma(4)$ sending  $is, js, ks$ to the images of $S^2, T^2, -\textrm{Id}$  in $\Gamma(2)/\Gamma(4)$ respectively; the sequence 
$$1 \rightarrow \{\pm 1\} \rightarrow \widetilde Q \rightarrow (\mathbb Z /2)^3 \simeq \Gamma(2)/\Gamma(4) \rightarrow 1$$ 
is exact.
\end{enumerate}
\end{lemma}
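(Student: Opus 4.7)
The plan is essentially a direct verification, broken into three steps, one for each assertion.

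For part (1), I would tabulate how each operator acts on the orthonormal basis $\{\varepsilon_1,\varepsilon_i,\varepsilon_j,\varepsilon_k\}$. The action of $Q$ is recorded in \eqref{e.aut-g3} as $h\cdot\varepsilon_g=\varepsilon_{hg}$, combined with $\varepsilon_{-g}=-\varepsilon_g$ from \eqref{e.sym-g3}, while $s$ is the sign pattern $(+,-,-,+)$ by definition. The equality $s^2=1$ is immediate. For each of $is=-si$, $js=-sj$, $ks=sk$, I apply both sides to each basis vector and use the quaternion multiplication table to identify the resulting signs. Conceptually, $i$ and $j$ both swap the $s$-positive pair $\{\varepsilon_1,\varepsilon_k\}$ with the $s$-negative pair $\{\varepsilon_i,\varepsilon_j\}$, whereas $k$ preserves this pairing; this explains the sign change in the first two cases and its absence in the third.

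For part (2), the relations from (1) together with $s^2=1$ allow any word in the generators of $\widetilde Q$ to be reduced to $\epsilon\,q\,s^n$ with $\epsilon\in\{\pm 1\}$, $q\in\{1,i,j,k\}$ and $n\in\{0,1\}$, so the subgroup has at most sixteen elements. To see that these sixteen candidates are pairwise distinct, it suffices to check that $s\notin Q$: indeed $s$ fixes $\varepsilon_1$, whereas the only element of $Q$ fixing $\varepsilon_1$ is $1$, which does not negate $\varepsilon_i$. Hence the cosets $Q$ and $Qs$ are disjoint and each contains eight elements.

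Part (3) is where the main care is needed. The target $(\mathbb{Z}/2)^3\simeq\Gamma(2)/\Gamma(4)$ is abelian and $2$-torsion, so any morphism $\phi$ from $\widetilde Q$ must kill $-1$ (since $\phi(-1)=\phi(i)^2=0$); once $\phi(-1)=0$ is granted, every sign-twisted relation such as $is=-si$ becomes a trivial identity in the target. It therefore suffices to define $\phi$ on the three generators $i,j,s$ consistently with the prescribed values on $is,js,ks$; this amounts to the linear system
\begin{align*}
\phi(i)+\phi(s)&=[S^2],\\
\phi(j)+\phi(s)&=[T^2],\\
\phi(i)+\phi(j)+\phi(s)&=[-\mathrm{Id}]
\end{align*}
in $(\mathbb{Z}/2)^3$, which has a unique solution and thus yields existence and uniqueness simultaneously. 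Once $\phi$ is defined, I would check that each defining relation of $\widetilde Q$ maps to a trivial identity, and finally establish exactness: surjectivity follows because the three chosen generators lie in the image, and to show $\ker\phi=\{\pm 1\}$ I would substitute the values of $\phi(i),\phi(j),\phi(s)$ into each of the sixteen elements and verify that only $\pm 1$ are sent to $0$. The only step requiring real attention is this final enumeration, but it is fully mechanical.
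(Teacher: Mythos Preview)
Your proposal is correct and, for parts (1) and (2), matches the paper's proof essentially verbatim: direct calculation for the relations, then the observation that $s\notin Q$ forces the sixteen listed elements to be distinct.

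For part (3) there is a minor difference in execution worth noting. The paper argues structurally: since $\{\pm 1\}$ is central and the quotient $\widetilde Q/\{\pm 1\}$ is abelian with every element of order at most $2$, this quotient is $(\mathbb Z/2)^3$; the images of $is,js,ks$ generate it, hence form a basis, so sending them to the (generating) images of $S^2,T^2,-\mathrm{Id}$ in $\Gamma(2)/\Gamma(4)$ determines a unique homomorphism and the exact sequence drops out by counting. Your approach instead solves a $3\times 3$ linear system over $\mathbb Z/2$ for $\phi(i),\phi(j),\phi(s)$ and then proposes to verify the kernel by enumeration. Both are valid; the paper's route is shorter because once $\widetilde Q/\{\pm1\}\simeq(\mathbb Z/2)^3$ is identified, existence, uniqueness, surjectivity, and $\ker\phi=\{\pm1\}$ all follow at once without any case-by-case checking. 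Your linear-system observation is in fact equivalent to showing that $[is],[js],[ks]$ form a basis of the quotient, so you could shortcut the final enumeration by the same order count ($|\widetilde Q|=16$, image has order $8$, hence $|\ker\phi|=2$).
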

\begin{proof}
The first part is a direct calculation; it shows that the elements 
$$\pm 1,\pm i,\pm j,\pm k,\pm s,\pm is,\pm js,\pm ks$$ 
form a subgroup of $W(R)$; as $s\notin Q$, these elements must be distinct. This proves the second part of the lemma. The subgroup $\{\pm1\}$ is contained in the center of $\widetilde Q$; and the quotient $\widetilde Q/\{\pm1\}$ is abelian with every element of order $\leq 2$, hence isomorphic to $(\mathbb Z /2)^3$. Moreover, the image of $is,js,ks$ clearly generate this quotient. This proves the third part of the lemma.
\end{proof}

The formulas for the actions of $\widetilde{S}^2$, $\widetilde{T}^2$ and $\widetilde{e^{i\pi}}$ on the $\varepsilon_g$, $g\in Q$, show that the images of these elements in $W(R)$ are respectively equal to $is, js, ks$.

\begin{lemma} The image $Z(\Gamma^*(2))$ is equal to $\widetilde Q$, which is also equal to the image $Z(\widetilde \Gamma(2))$. The 
inverse image  $Z^{-1}(\{\pm 1\}) \bigcap \widetilde \Gamma(2)$ is equal to the inverse image $\widetilde \Gamma(4)$ of $\Gamma(4)$
in  \linebreak $\textrm{Aff}_{(1)}(M_3,\omega_{(3)})$. The restriction of the canonical map $\widetilde \Gamma(2) \rightarrow \Gamma(2)$ to the kernel of $Z$ in $\widetilde\Gamma(2)$ is an isomorphism onto $\Gamma(4)$.
\end{lemma}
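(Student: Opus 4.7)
The plan is to exploit the exact sequence $1\to\{\pm1\}\to\widetilde Q\to\Gamma(2)/\Gamma(4)\to 1$ established in the preceding lemma together with the identifications $Z(\widetilde{S}^2)=is$, $Z(\widetilde{T}^2)=js$, $Z(\widetilde{e^{i\pi}})=ks$. The unifying observation is that the composite $\widetilde\Gamma(2)\xrightarrow{Z}\widetilde Q\twoheadrightarrow\widetilde Q/\{\pm1\}\simeq\Gamma(2)/\Gamma(4)$ coincides with the natural projection $\widetilde\Gamma(2)\to\Gamma(2)\to\Gamma(2)/\Gamma(4)$; this is immediate on the generators $\widetilde{S}^2,\widetilde{T}^2,\widetilde{e^{i\pi}},-1$ of $\widetilde\Gamma(2)$ (the first three lift generators of $\Gamma(2)$, while $-1$ generates the kernel of $\textrm{Aff}_{(1)}(M_3,\omega_{(3)})\to SL(2,\mathbb Z)$).

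First I would verify $Z(\widetilde\Gamma(2))=\widetilde Q$. The image under $Z$ is generated by $is,js,ks,-1$. Using $s^2=1$, $is=-si$, $js=-sj$, $ks=sk$ one computes $(is)(js)=-k$, $(js)(ks)=i$, $(is)(ks)=-j$, so together with $-1$ the image already contains $\pm i,\pm j,\pm k$; then $i\cdot(is)=-s$ yields $\pm s$, exhausting $\widetilde Q$. The reverse inclusion is immediate since each generator lies in $\widetilde Q$. To extend to $\Gamma^*(2)$, note that $[\Gamma^*(2):\widetilde\Gamma(2)]=4$ (as $[\textrm{Aff}(M_3,\omega_{(3)}):\widetilde\Gamma(2)]=24$ from the previous lemma, while $[\textrm{Aff}(M_3,\omega_{(3)}):\Gamma^*(2)]=|S_3|=6$), and $Q\cap\widetilde\Gamma(2)=\{\pm1\}$, so the map $Q\to\Gamma^*(2)/\widetilde\Gamma(2)$ is onto and $\Gamma^*(2)=Q\cdot\widetilde\Gamma(2)$. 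Since $Z(Q)=Q\subset\widetilde Q$, this gives $Z(\Gamma^*(2))=\widetilde Q$.

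The remaining two assertions then follow from the compatibility of sequences. The kernel of $\widetilde\Gamma(2)\to\Gamma(2)/\Gamma(4)$ equals $\widetilde\Gamma(4)$ by definition, while via the $Z$-route it equals $Z^{-1}(\{\pm1\})\cap\widetilde\Gamma(2)$, yielding the second assertion. For the third, set $K=\ker(Z)\cap\widetilde\Gamma(2)\subseteq\widetilde\Gamma(4)$. The kernel of $\widetilde\Gamma(2)\to\Gamma(2)$ is $\{\pm1\}$ and $Z(-1)=-\textrm{Id}\neq 1$, so $K\cap\{\pm1\}=\{1\}$ and the restricted projection $K\to\Gamma(4)$ is injective. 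For surjectivity, given $M\in\Gamma(4)$ choose any lift $\tilde f\in\widetilde\Gamma(4)$; then $Z(\tilde f)\in\{\pm1\}$, and if $Z(\tilde f)=-1$ I replace $\tilde f$ by $\tilde f\cdot(-1)$, which still projects to $M$ in $\Gamma(2)$ but now satisfies $Z(\tilde f\cdot(-1))=1$, hence lies in $K$. I do not foresee any serious obstacle; the one delicate point is the generator-level check of the compatibility of sequences, but this has been set up by the preceding lemma.
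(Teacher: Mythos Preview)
Your proof is correct and follows essentially the same approach as the paper's: both rest on the identification $Z(\widetilde S^2)=is$, $Z(\widetilde T^2)=js$, $Z(\widetilde{e^{i\pi}})=ks$ together with the exact sequence $1\to\{\pm1\}\to\widetilde Q\to\Gamma(2)/\Gamma(4)\to 1$ from the preceding lemma. You spell out a few steps the paper leaves implicit (the explicit generation of $\widetilde Q$ by $is,js,ks,-1$, the index argument giving $\Gamma^*(2)=Q\cdot\widetilde\Gamma(2)$, and the lift-and-adjust surjectivity argument for the third assertion), but the underlying ideas are the same.
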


\begin{proof}
The first assertion follows from the fact that the images by $Z$ of the generators $\widetilde{S}^2$, $\widetilde{T}^2$ and $\widetilde{e^{i\pi}}$ of $\widetilde \Gamma(2)$ are the generators $is,js,ks$ of $\widetilde Q$. The second assertion follows from the exact sequence just before the statement of the lemma. The third assertion follows from the fact that the kernel of the morphism $\widetilde \Gamma(4) \rightarrow \Gamma(4)$ is sent isomorphically by $Z$ onto $\{\pm 1\}$.
\end{proof}

\begin{remark}
It is not difficult to determine explicitly the kernel of $Z$ in $\widetilde \Gamma(2)$. One first observes, looking at the action of
$\Gamma(4)$ in the upper half plane, that $\Gamma(4)$ is the free subgroup generated by

$$S^4 = \left( \begin{array}{cc} 1 & 0 \\  4 & 1 \\  \end{array} \right),\quad T^4 = \left( \begin{array}{cc} 1 & 4 \\  0 & 1 \\  \end{array} \right),\quad (TS)^3= \left( \begin{array}{cc} 13 & 8 \\  8 & 5 \\  \end{array} \right),$$
$$ -(S^2T)^2= -\left( \begin{array}{cc} 3 & 4 \\  8 & 11 \\  \end{array} \right),\quad S^2 T^4 S^2 = \left( \begin{array}{cc} 9 & 4 \\  20 & 9 \\  \end{array} \right).$$

Then, from the action of $\widetilde S$, $\widetilde T$ on the $\alpha_i$, $1\leq i \leq 4$, one checks that the corresponding generators for the kernel of $Z$ in $\widetilde \Gamma(2)$ are

$$\widetilde S^4,\; \widetilde T^4,\; (-1).(\widetilde T \widetilde S)^3,\; \widetilde{e^{i\pi}} (\widetilde S^2 \widetilde T)^2,\;
\widetilde S^2 \widetilde T^4 \widetilde S^2.$$

\end{remark}

\begin{lemma}
The subgroup $\widetilde{Q}\subset W(R)$ is \emph{exactly} the subgroup of symplectic elements of $W(R)$ w.r.t. the symplectic form induced by the intersection form on $H_1^{(0)}$.

\end{lemma}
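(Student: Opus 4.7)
The forward inclusion $\widetilde{Q}\subseteq W(R)\cap\mathrm{Sp}(H_1^{(0)})$ is free of computation: by the preceding lemma $\widetilde{Q}=Z(\widetilde{\Gamma}(2))$, so every $w\in\widetilde{Q}$ is $Z(f)$ for some $f\in\widetilde{\Gamma}(2)\subseteq\mathrm{Aff}(M_3,\omega_{(3)})$. Since $f$ is an orientation-preserving homeomorphism of $M_3$, the induced map $f_*$ preserves the topological intersection form on $H_1(M_3,\mathbb{Q})$; restricting to the invariant subspace $H_1^{(0)}$ shows that $w=Z(f)$ is symplectic.

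Since $|\widetilde{Q}|=16$ by the previous lemma, the reverse inclusion reduces to showing that $W(R)\cap\mathrm{Sp}(H_1^{(0)})$ has at most $16$ elements. The plan is to compute the intersection form $\omega$ explicitly in the basis $\{\varepsilon_1,\varepsilon_i,\varepsilon_j,\varepsilon_k\}$ and then enumerate. Using the expansions $\widehat{\sigma}_g=\tfrac{1}{2}(\varepsilon_g+\varepsilon_{gj})$ and $\widehat{\zeta}_g=\tfrac{1}{2}(\varepsilon_g+\varepsilon_{gi})$, the pairing $\omega(\varepsilon_g,\varepsilon_h)$ is reduced to the intersection numbers $\langle\widehat{\sigma}_a,\widehat{\zeta}_b\rangle$ on the square-tiled surface of Figure~\ref{Forni-origami} (the $\sigma$--$\sigma$ and $\zeta$--$\zeta$ pairings vanish because parallel horizontal resp.\ vertical segments can be placed at distinct heights resp.\ widths). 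The $Q$-equivariance of $\omega$ cuts this to a handful of base pairings, and the identity $\widehat{\sigma}_g-\widehat{\sigma}_{gj}=\widehat{\zeta}_g-\widehat{\zeta}_{gi}$ provides consistency checks on the signs.

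With $\omega$ in hand as an antisymmetric matrix $\Omega$, I would write a general $w\in W(R)$ as a signed permutation $(\tau,\epsilon)$ with $\tau\in S_4$, $\epsilon\in\{\pm1\}^4$, $\prod_g\epsilon_g=1$, acting by $\varepsilon_g\mapsto\epsilon_g\,\varepsilon_{\tau(g)}$. The symplectic condition becomes the sign equations $\epsilon_g\epsilon_h\,\Omega_{\tau(g),\tau(h)}=\Omega_{g,h}$ for each pair $g\ne h$. For fixed $\tau$ this is a coboundary system in $\epsilon$: it is either inconsistent, or admits exactly two solutions (the $\epsilon\leftrightarrow-\epsilon$ ambiguity, both satisfying $\prod\epsilon_g=1$). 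A direct case analysis over the $24$ permutations in $S_4$ (organised by the $Q$-orbit structure) picks out exactly $8$ compatible $\tau$'s, yielding $16$ symplectic elements in total and the required equality with $\widetilde{Q}$. The principal technical obstacle is the intersection-number computation — honest sign bookkeeping in the presence of the singular set $\Sigma_{(3)}$; once $\Omega$ is known, the enumeration is mechanical.
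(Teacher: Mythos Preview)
Your overall strategy matches the paper's: compute the intersection form in the $\varepsilon$-basis, then count which signed permutations in $W(R)$ preserve it. Your forward inclusion via ``affine diffeomorphisms preserve the intersection form'' is cleaner than what the paper writes (the paper leaves that direction implicit and relies on matching cardinalities). Two points, however, deserve attention.

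First, a genuine error in the intersection-form step. You assert that the $\widehat\sigma$--$\widehat\sigma$ (and $\widehat\zeta$--$\widehat\zeta$) pairings vanish ``because parallel horizontal segments can be placed at distinct heights''. This is false: the paper computes $(\widehat\sigma_1,\widehat\sigma_i)=2$ and $(\widehat\sigma_j,\widehat\sigma_k)=-2$. The heuristic fails because the $\widehat\sigma_g=p(\sigma_g-\sigma_{-g})$ are \emph{closed} curves in absolute homology, not arcs; closing them up forces them through the cone points of $\Sigma_{(3)}$, and the linking there produces nonzero intersection. (Your intuition would be correct for the duality pairing between $H_1(M-\Sigma)$ and $H_1(M,\Sigma)$, where one side can be pushed off the saddle connections, but that is not the form in play here.) The fix is easy and already implicit in your setup: use the two expressions $\varepsilon_g=\widehat\sigma_g-\widehat\sigma_{gj}=\widehat\zeta_g-\widehat\zeta_{gi}$, taking the $\widehat\sigma$-form for one argument and the $\widehat\zeta$-form for the other, so that only mixed $\widehat\sigma$--$\widehat\zeta$ pairings enter. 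Alternatively, do as the paper does and compute the $\widehat\sigma$--$\widehat\sigma$ table directly; either way one arrives at $(\varepsilon_1,\varepsilon_k)=4$, $(\varepsilon_i,\varepsilon_j)=-4$, all other pairs zero.

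Second, a comparison on the enumeration. Your plan is a full case-check over the $24$ permutations in $S_4$; this works and gives exactly $8$ compatible $\tau$'s with two sign choices each. The paper shortcuts this: since the only nonzero pairings are within $\{\varepsilon_1,\varepsilon_k\}$ and within $\{\varepsilon_i,\varepsilon_j\}$, any symplectic element of $W(R)$ must preserve the pair of symplectic planes $\{\pm\varepsilon_1,\pm\varepsilon_k\}$ and $\{\pm\varepsilon_i,\pm\varepsilon_j\}$ (as an unordered pair), which already cuts the permutation type down to the eight $\tau$'s preserving the partition $\{\{1,k\},\{i,j\}\}$; from there the count to $16$ is immediate. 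Your route is more mechanical, the paper's more structural; both are fine.
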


\begin{proof}
 A direct inspection reveals that the intersection form on the elements $\widehat{\sigma}_g$, $g\in\{1,i,j,k\}$, is given by 
\begin{center}
{\renewcommand{\arraystretch}{1.5}
\renewcommand{\tabcolsep}{0.2cm}
\begin{tabular}{|c|c|c|c|c|}
\hline
$(.,.)$ & $\widehat{\sigma}_1$ & $\widehat{\sigma}_i$ & $\widehat{\sigma}_j$ & $\widehat{\sigma}_k$ \\
\hline
$\widehat{\sigma}_1$ & $0$ & $+2$ & $0$ & $0$ \\
\hline
$\widehat{\sigma}_i$ & $-2$ & $0$ & $0$ & $0$\\
\hline
$\widehat{\sigma}_j$ & $0$ & $0$ & $0$ & $-2$ \\
\hline
$\widehat{\sigma}_k$ & $0$ & $0$ & $+2$ & $0$ \\
\hline
\end{tabular}}
\end{center}
\medskip
Using the definition $\varepsilon_g:=\widehat{\sigma}_g-\widehat{\sigma}_{gj}$, we obtain the following intersection table:
\begin{center}
{\renewcommand{\arraystretch}{1.5}
\renewcommand{\tabcolsep}{0.2cm}
\begin{tabular}{|c|c|c|c|c|}
\hline
$(.,.)$ & $\varepsilon_1$ & $\varepsilon_i$ & $\varepsilon_j$ & $\varepsilon_k$ \\
\hline
$\varepsilon_1$ & $0$ & $0$ & $0$ & $+4$ \\
\hline
$\varepsilon_i$ & $0$ & $0$ & $-4$ & $0$\\
\hline
$\varepsilon_j$ & $0$ & $+4$ & $0$ & $0$ \\
\hline
$\varepsilon_k$ & $-4$ & $0$ & $0$ & $0$ \\
\hline
\end{tabular}}
\end{center}
\medskip
In particular, since any symplectic transformation sends symplectic planes into symplectic planes, we conclude that symplectic elements of $W(R)$ either fix or exchange the subsets $\{\pm\varepsilon_1,\pm\varepsilon_k\}$ and $\{\pm\varepsilon_j,\pm\varepsilon_i\}$. It follows that the subgroup of symplectic elements of $W(R)$ has order 16, so it is equal to $\widetilde{Q}$.
\end{proof}

The proof of Theorem~\ref{t.A} is now complete.

%%%%%%%%%%%%%%%%%%%%%%%%%%%%%%%%%%%%%%%%%%%%%%%%%%%%%%%%%%%%%%%%%%%%%%%%%%%%%%%%%%%%%%%%
%%%%%%%%%%%%%%%%%%%%%%%%%%%%%%%%%%%%%%%%%%%%%%%%%%%%%%%%%%%%%%%%%%%%%%%%%%%%%%%%%%%%%%%%
%%%%%%%%%%%%%%%%%%%%%%%%%%%%%%%%%%%     Genus 4 example     %%%%%%%%%%%%%%%%%%%%%%%%%%%%%%%%%%%%%%%%%
%%%%%%%%%%%%%%%%%%%%%%%%%%%%%%%%%%%%%%%%%%%%%%%%%%%%%%%%%%%%%%%%%%%%%%%%%%%%%%%%%%%%%%%%
%%%%%%%%%%%%%%%%%%%%%%%%%%%%%%%%%%%%%%%%%%%%%%%%%%%%%%%%%%%%%%%%%%%%%%%%%%%%%%%%%%%%%%%%

\section{Proof of Theorem~\ref{t.B}}

The outline of this section is the following. In Subsection 3.1, we introduce a family of square-tiled surfaces parametrized by an odd integer $q\geq 3$ such that the one featuring in Theorem~\ref{t.B} corresponds to $q=3$. The Veech group is computed, as well as a representation of these surfaces as algebraic curves. We then compute in Subsection 3.2 the action of the automorphism group (cyclic of order $q$) on homology, and then in Subsection 3.3 the action of generators of the affine group. This allows in Subsection 3.4 to determine an invariant complement $H_{rel}$ of $H_1(M,\mathbb{Q})$ in $H_1(M,\Sigma,\mathbb{Q})$. The subspace $H_1^{(0)}$ breaks into invariant subspaces $H_{\tau}$ and $\breve{H}$ which are analyzed in Subsections 3.5, 3.6 respectively (for any odd $q\geq 3$). The special case $q=3$ (corresponding to a larger Veech group equal to $SL(2,\mathbb{Z})$) is further analyzed in Subsection 3.7.

\subsection{The square-tiled surface $(M,\omega)$}

Let $q$ be an {\bf odd} integer (for the surface referred to in Theorem~\ref{t.B}, $q$ is equal to $3$). Let $(M,\omega)$ be the square-tiled surface such that
\begin{itemize}
\item The set $Sq(M,\omega)$ is identified with $\mathbb Z/q \times \mathbb Z/2 \times \mathbb Z/2 $; we denote as before by $sq(g)$ the square associated to $g$;
\item The map $r$ (for right) is 
$$ \begin{array}{ll}
& sq(i,\mu,\nu) \mapsto \left\{ \begin{array}{l} sq(i,\mu +1, \nu) \quad \hbox{for} \quad \mu =1,\\[2ex]
sq(i+1,\mu +1, \nu) \quad \hbox{for} \quad \mu =0,\;\nu =0, \\[2ex]
sq(i-1,\mu +1, \nu) \quad \hbox{for} \quad \mu =0,\;\nu =1. \end{array} \right. 
\end{array}$$
\item The map $u$ (for up) is 
$$ \begin{array}{ll}
& sq(i,\mu,\nu) \mapsto \left\{ \begin{array}{l} sq(i,\mu , \nu +1) \quad \hbox{for} \quad \nu =1,\\[2ex]
sq(i+1,\mu, \nu +1) \quad \hbox{for} \quad \nu =0,\;\mu =1, \\[2ex]
sq(i-1,\mu, \nu +1) \quad \hbox{for} \quad \nu =0,\;\mu =0. \end{array} \right. 
\end{array}$$
\end{itemize}
\begin{figure}[!h]
\centering
\includegraphics[scale=0.35]{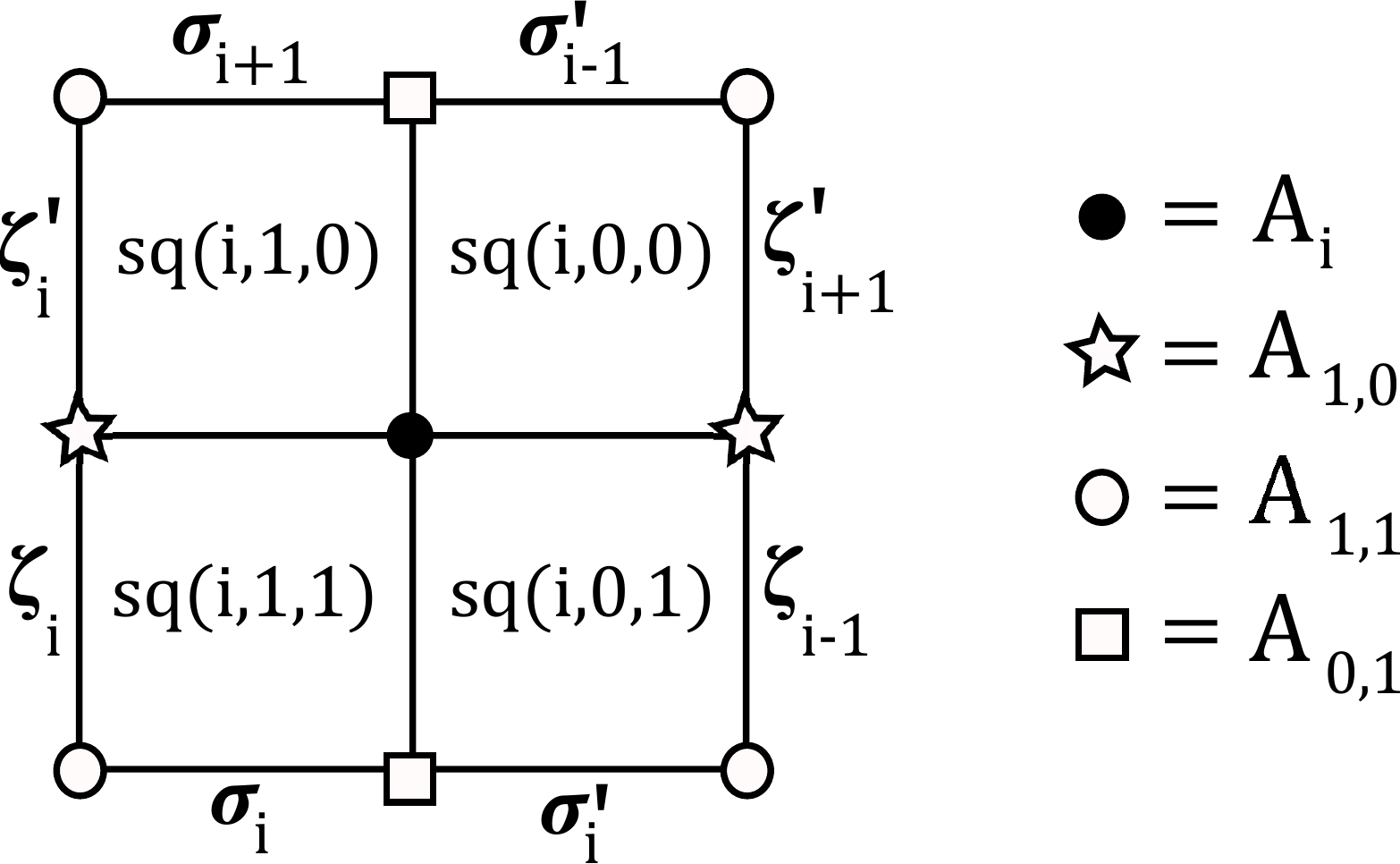}
\caption{Description of $sq(i,\mu,\nu)$, $i\in\Zq$, $\mu,\nu\in\mathbb{Z}/2$.}\label{coordinates}
\end{figure}

The commutator $uru^{-1}r^{-1}$ maps $sq(i,\mu,\nu)$ to $sq(i,\mu,\nu)$ if $(\mu, \nu) = (0,0)$, \linebreak to $sq(i-2,\mu,\nu)$ if $(\mu, \nu) = (1,0)$ or $(0,1)$, to $sq(i+4,\mu,\nu)$ if $(\mu, \nu) = (1,1)$. As $q$ is odd, this shows that, for $(\mu,\nu) \ne (0,0)$,
 the lower left corners of the $q$ squares $sq(i,\mu,\nu)$, $i \in \mathbb Z /q$, coincide and define a point $A_{\mu,\nu}$ of $\Sigma$ of ramification index $q$, i.e., a zero of $\omega$ of order $q-1$. On the other hand, the lower left corners $A_{0,0,i}=:A_i$ of the squares
 $sq(i,0,0)$ are regular points of $(M,\omega)$. We have thus $\Sigma = \{A_{0,1},A_{1,0},A_{1,1} \}$. The form $\omega$ has a zero of even order
 $q-1$ at each point of $\Sigma$ and the genus $g$ of $M$ is $\frac{3q-1}{2}$. Compare with Figure~\ref{coordinates}.
 
In Figures~\ref{q3} and~\ref{q5} below, we illustrate the square-tiled surfaces corresponding to the cases $q=3$ and $q=5$.

The automorphism group ${\rm Aut} (M,\omega)$ is $\mathbb Z /q$, acting by translations of the $i$ variable. It fixes each point of $\Sigma$.\\
 
Let us denote by ${\rm Aff}_{(1)} (M,\omega) $ the stabilizer of $A_0$ in the affine group ${\rm Aff} (M,\omega) $. Taking the differential
defines an isomorphism from ${\rm Aff}_{(1)} (M,\omega) $ onto the Veech group $SL(M,\omega)$.\\

When $q$ is odd $\geq 5$, $i-2$ is not equal to $i+4$ in $\mathbb Z /q$ and therefore every element of  ${\rm Aff} (M,\omega) $ must fix $A_{1,1}$.
On the other hand, one sees easily that the matrices 
$$S^2 = \left( \begin{array}{cc} 1 & 0 \\  2 & 1 \\  \end{array} \right),\quad T^2 = \left( \begin{array}{cc} 1 & 2 \\  0 & 1 \\  \end{array} \right),\quad J = \left( \begin{array}{cc} 0 & -1 \\  1 & 0 \\  \end{array} \right),$$
belong to $SL(M,\omega)$. We have thus (as $\Gamma(2)$ is generated by $S^2,T^2,J^2$)
$$ SL(M,\omega) = \{ M \in SL(2,\mathbb Z); \; M \equiv  {\rm Id} \;\;{\rm or}\;\; M \equiv J \;\;{\rm mod}\; 2 \;\}\;.$$

\begin{figure}[h!]
\centering
\includegraphics[scale=0.28]{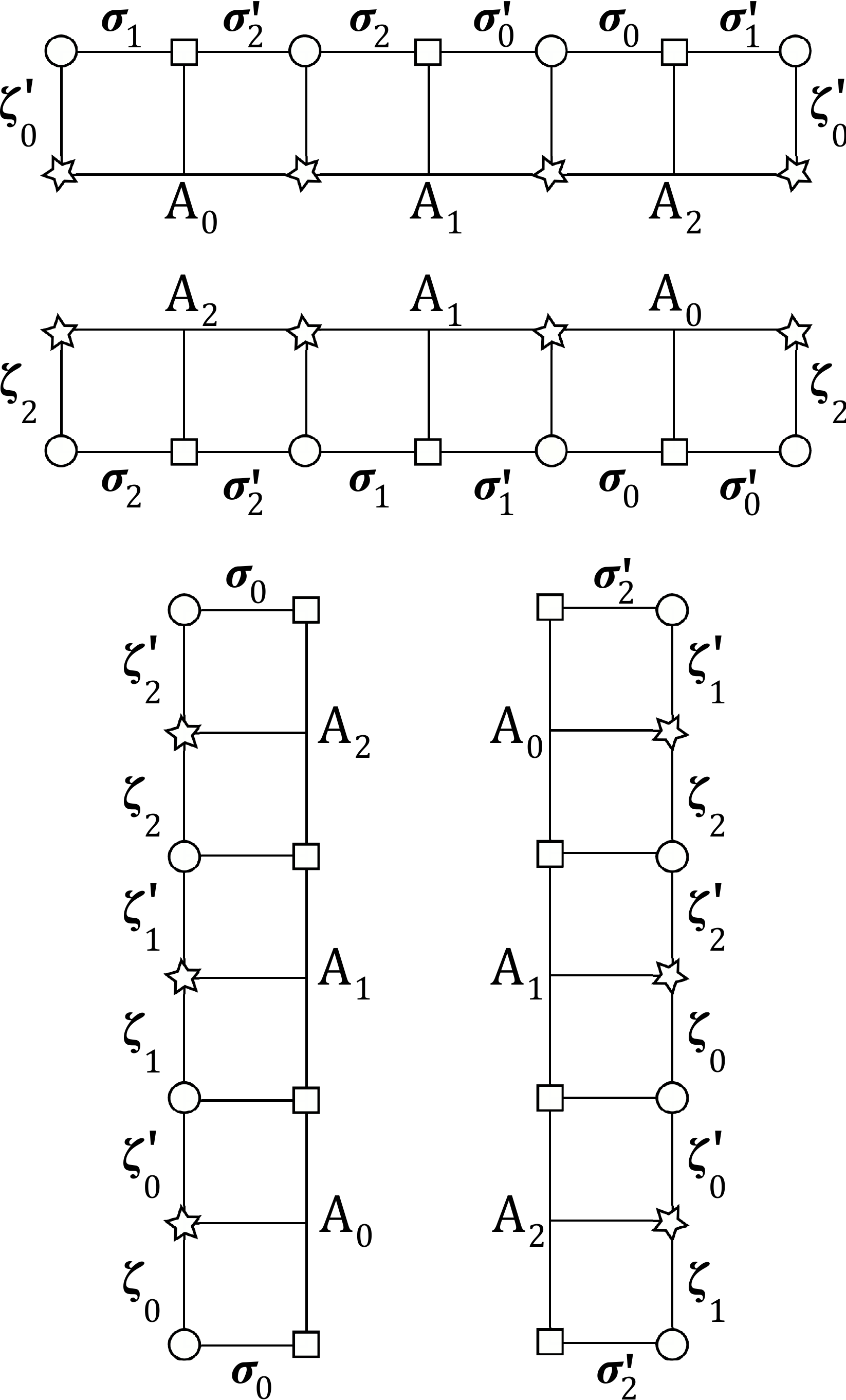}
\caption{Horizontal and vertical cylinder decompositions when $q=3$.}\label{q3}
\end{figure}
\newpage
\begin{figure}[!h]
\centering
\includegraphics[scale=0.35]{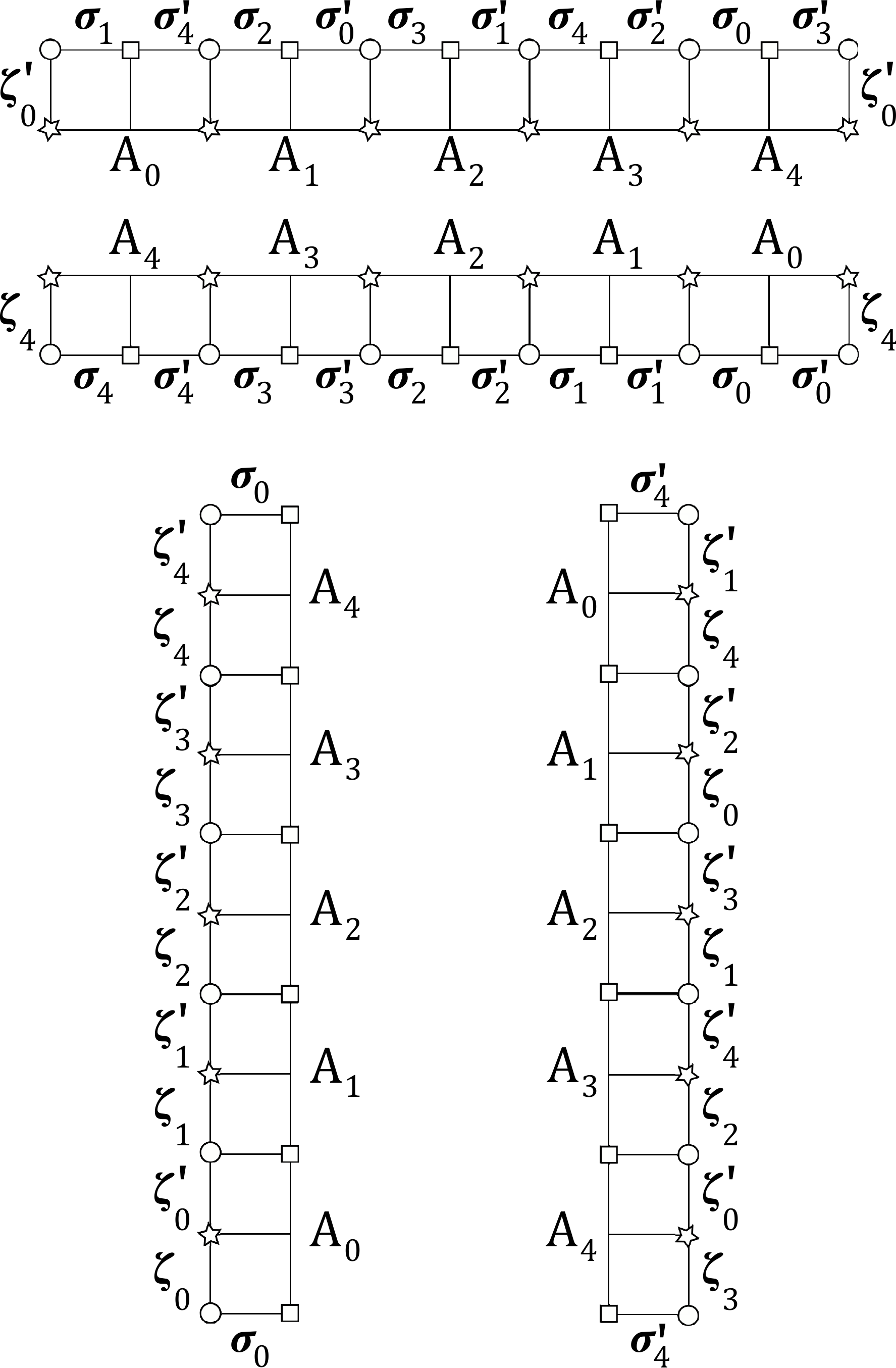}
\caption{Horizontal and vertical cylinder decompositions when $q=5$.}\label{q5}
\end{figure}

When $q=3$, it is easy to see that $S,\,T \in SL(M,\omega)$. The Veech group in this case is equal to $SL(2,\mathbb Z)$.\\

In all cases, by considering generators of $SL(M,\omega)$, one checks that any element of ${\rm Aff}_{(1)} (M,\omega)$ actually fixes all the $A_i$.
It follows that ${\rm Aut} (M,\omega)$ is contained in the center of  ${\rm Aff} (M,\omega) $, and that ${\rm Aff} (M,\omega) $ is the direct product 
of  ${\rm Aut} (M,\omega) \simeq \mathbb Z/q$ by $SL(M,\omega)$. \\

\begin{remark}Let $t \in {\rm Aff} (M,\omega)$ be the element whose linear part is $-\textrm{Id}\in SL(2,\mathbb{Z})$ and such that $t(sq(i,\mu,\nu))
= sq(i+1,\mu +1,\nu +1)$ for all $i,\mu,\nu$. It has order $2q$, fixes each point in $\Sigma$ and permutes cyclically the $A_{0,0,i}$.
The quotient $(M,\omega) / \langle t\rangle$ is by the Riemann-Hurwitz formula of genus $0$. 

Although we do not need it in the following, let us give formulas associated to this cyclic covering.

We claim that $M$ is isomorphic to the desingularization of the algebraic curve 
$$w^{2q}= z^{q-2}(z^2-1),$$
with the holomorphic form 
$$\omega^{*}=c^{-1}\frac{z^{\frac{q-3}{2}}dz}{w^q},$$
and the automorphism $t$ given by 
$$(w,z)\mapsto \left(\exp\left(-\frac{\pi i}{q}\right)w, z\right).$$
Here, $c$ is the elliptic integral 
$$c=\int_1^\infty \frac{dx}{\sqrt{x^3-x}} = \int_0^1\frac{dx}{\sqrt{x-x^3}} = \frac{(\Gamma(1/4))^2}{2\sqrt{2\pi}}$$
(cf.~\cite{BB}, thm. 1.7).

Indeed, let $M^*$ be the curve $\{w^{2q}= z^{q-2}(z^2-1)\}$, desingularized in order to have $q$ distinct points at infinity given by $z=\exp\left(\frac{2\pi ij}{q}\right)w^2+O(1)$, $j\in\Zq$. Observe that $(c\omega^*)^2$ is the pull-back by the projection $(z,w)\mapsto z$ of the quadratic differential with simple poles $\frac{(dz)^2}{z(z^2-1)}$. The elliptic curve $E=\{y^2=z(z^2-1)\}$ is isomorphic to $\mathbb{C}/2\mathbb{Z}+2i\mathbb{Z}$. More precisely, let $E'=\{(y,z)\in E: z\notin [-1,1]\}$; the formula 
$$P\mapsto \int_\infty^P c^{-1}\frac{dz}{y}$$
(with the path of integration contained in $E'$) provides a biholomorphism $\Phi$ from $E'$ onto $(-1,1)^2$, such that $\Phi^{-1}$ extends continuously from $[-1,1]^2$ to $E$ with 
$$\Phi^{-1}(\pm1,0) = (0,1),$$
$$\Phi^{-1}(0,\pm1)=(0,-1),$$
$$\Phi^{-1}(\pm1,\pm1)=(0,0).$$
Similarly, let 
$$M'=\{(w,z)\in M^*: z\notin[-1,1]\}=\{(w,z)\in M^*: \left(1-\frac{1}{z^2}\right)\notin\{0\}\cup\mathbb{R}^-\}.$$ 
Then, $M'$ has $q$ connected components indexed naturally by $\Zq$, with 
$$M_j'=\left\{(w,z)\in M': \left|\arg\frac{z}{w^2} - \frac{2\pi j}{q}\right|<\frac{\pi}{q}\right\}.$$
Each connected component $M_j'$ contains one point at infinity; the formula 
$$P\mapsto \int_\infty^P \omega^*$$ 
(with the path of integration contained in $M_j'$) provides a biholomorphism $\Phi_j$ from $M_j'$ onto $(-1,1)^2$, such that $\Phi_j^{-1}$ extends continuously from $[-1,1]^2$ to $M^*$ with 
$$\Phi_j^{-1}(\pm1,0) = (0,1),$$
$$\Phi_j^{-1}(0,\pm1)=(0,-1),$$
$$\Phi_j^{-1}(\pm1,\pm1)=(0,0).$$
One also sees that, for $0\leq s\leq 1$, $j\in\Zq$, 
$$\Phi_j^{-1}(1,s) = \Phi_{j+1}^{-1}(-1,s) ,$$
$$\Phi_j^{-1}(1,-s)=\Phi_{j-1}^{-1}(-1,-s),$$
$$\Phi_j^{-1}(s,1)=\Phi_{j-1}^{-1}(s,-1),$$
$$\Phi_j^{-1}(-s,1)=\Phi_{j+1}^{-1}(-s,-1).$$
This completes the justification of our claim that $(M^*,\omega^*)$ is isomorphic to $(M,\omega)$. The $q$ points at infinity on $M^*$ correspond to the $A_{0,0,j}$, the points $(0,1)$, $(0,-1)$, $(0,0)$ of $M^*$ correspond to the points $A_{0,1}, A_{1,0}, A_{1,1}$ of $\Sigma\subset M$. The sides of the squares correspond to the locus on $M^*$ where the function $z$ is real. See Figure~\ref{f.remark} below. The claim about $t$ is easily checked.

When $q=3$, one recovers the equation of Subsection 1.4 (except that $z=0$ has been sent to infinity).
\begin{figure}[!h]
\centering
\includegraphics[scale=0.35]{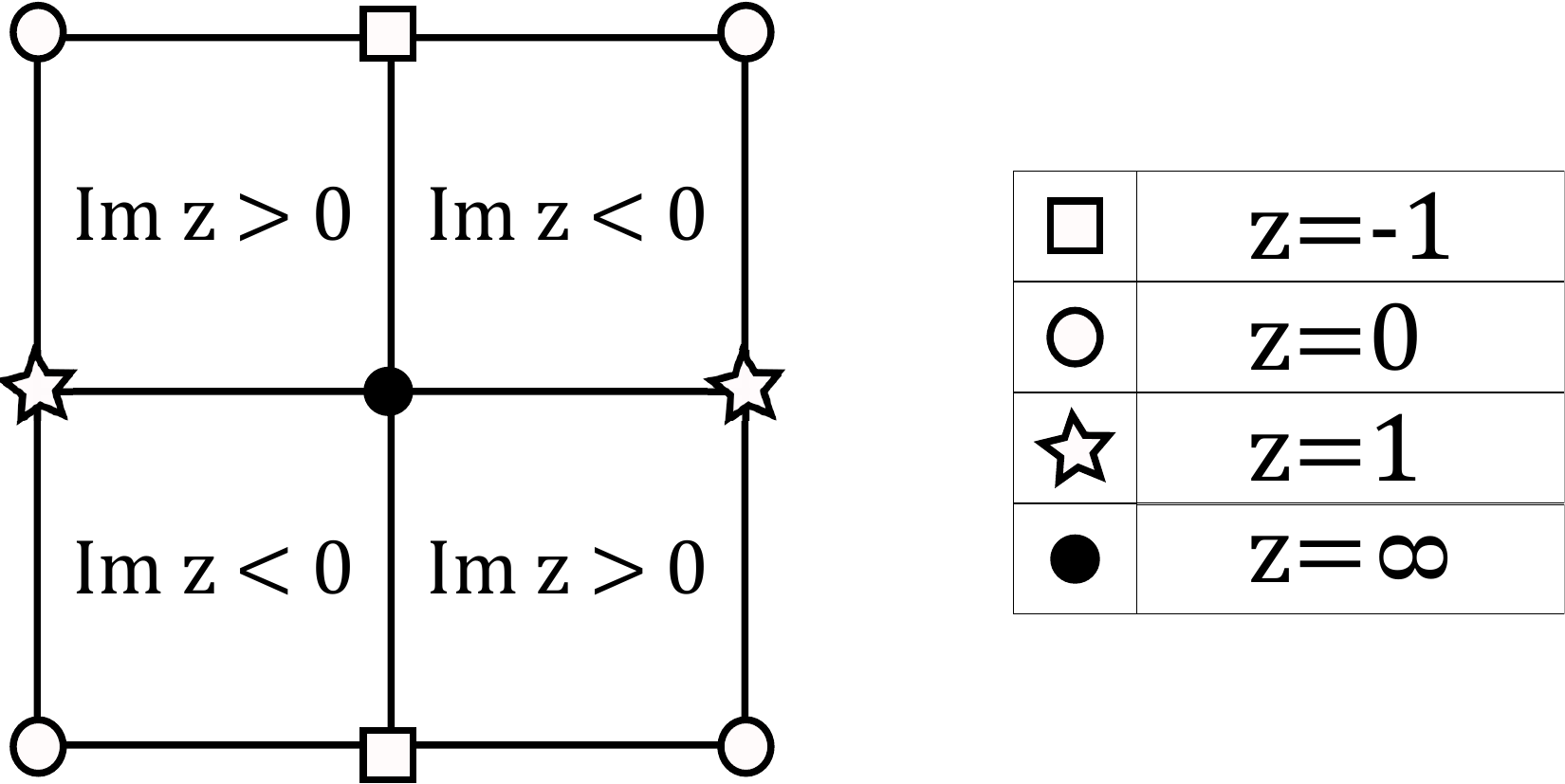}
\caption{}\label{f.remark}
\end{figure}

\end{remark}

\subsection{Action of  ${\rm Aut} (M,\omega)$ on $H_1(M,\Sigma,\mathbb{Z})$}

We consider the direct sum of four copies of ${\mathbb Z}(\Zq)$; the canonical bases of the four copies are respectively denoted by 
$(\sigma_i)_{i \in \Zq}$, $(\sigma'_i)_{i \in \Zq}$, $(\zeta_i)_{i \in \Zq}$, $(\zeta'_i)_{i \in \Zq}$. We define a homomorphism $p$ from this 
direct sum onto $H_1(M,\Sigma,\mathbb{Z})$ by sending
\begin{itemize}
\item $\sigma_i$ on the bottom side of $sq(i,1,1)$, oriented rightwards;
\item $\sigma'_i$ on the bottom side of $sq(i,0,1)$, oriented rightwards;
\item $\zeta_i$ on the left side of $sq(i,1,1)$, oriented upwards;
\item $\zeta'_i$ on the left side of $sq(i,1,0)$, oriented upwards.
\end{itemize}

See Figure~\ref{coordinates} above.

The homomorphism $p$ is compatible with the actions of $\Zq$ on $\mathbb{Z}(\Zq) \oplus \mathbb{Z}(\Zq) \oplus \mathbb{Z}(\Zq) \oplus \mathbb{Z}(\Zq)$ (by the regular representation)
 and $H_1(M,\Sigma,\mathbb{Z})$ (identifying $\Zq$ with ${\rm Aut} (M,\omega)$). The kernel of the homomorphism $p$ is the submodule $Ann$ of $\mathbb{Z}(\Zq) \oplus \mathbb{Z}(\Zq)\oplus \mathbb{Z}(\Zq)\oplus \mathbb{Z}(\Zq)$ generated by the elements
\begin{equation}\label{e.relations-g4}
\Box_i :=\sigma_i+ \sigma'_i +\zeta_{i-1} + \zeta'_{i+1} -\sigma'_{i-1}-\sigma_{i+1} -\zeta'_i - \zeta_i .
\end{equation}
%Thus, we have a natural identification of $H_1(M_3,\Sigma_{(3)},\mathbb{C})$ with the subspace $E^9$ of $\mathbb{C}^{16}$ of vectors $(\sigma_g,\zeta_g)_{g\in Q}$ verifying the relations~\eqref{e.relations-g3}.

We have $\sum_i \Box_i =0$, hence $Ann$ has rank $q-1$.  The boundary map $\partial: H_1(M,\Sigma,\mathbb{Z}) \rightarrow
\mathbb{Z}(\Sigma)$ is induced by
\begin{eqnarray*}
\partial (p(\sigma_i)) &=& A_{0,1} - A_{1,1} , \\
\partial (p(\sigma'_i)) &=& A_{1,1} - A_{0,1} , \\
\partial (p(\zeta_i)) &=& A_{1,0} - A_{1,1} , \\
\partial (p(\zeta'_i)) &=& A_{1,1} - A_{1,0} .
\end{eqnarray*}
\medskip

We introduce
\begin{eqnarray*}
\sigma &:= & p(\sum_i \sigma_i + \sum_i \sigma'_i), \\  
\zeta &:= & p(\sum_i \zeta_i + \sum_i \zeta'_i),\\
\sigma^{\flat} &:= & p(\sum_i \sigma_i - \sum_i \sigma'_i), \\
\zeta^{\flat} &:= & p(\sum_i \zeta_i - \sum_i \zeta'_i).
\end{eqnarray*}
 
We have 
$$g.\sigma =\sigma, \quad  g.\zeta = \zeta,\quad g.\sigma^{\flat} =\sigma^{\flat}, \quad g.\zeta^{\flat}=\zeta^{\flat},$$
for all $g \in \Zq \simeq {\rm Aut} (M,\omega)$. We have also
$$\partial \sigma = \partial \zeta =0, \quad \partial \sigma^{\flat} = 6(A_{0,1}-A_{1,1}), \quad \partial \zeta^{\flat} = 6(A_{1,0}-A_{1,1}). $$
 
Let
$$a_i := p(\sigma_i -\sigma_{i+1}),\quad a'_i := p(\sigma'_i -\sigma'_{i+1}), \quad b_i := p(\zeta_i -\zeta_{i+1}), \quad b'_i := p(\zeta'_i -\zeta'_{i+1}).$$

We have

$$\sum_i a_i = \sum_i a'_i =\sum_i b_i =\sum_i b'_i =0,$$

$$\partial a_i = \partial a'_i =\partial b_i=\partial b'_i=0,$$

and

$$g.a_i = a_{i+g},\quad g.a'_i = a'_{i+g},\quad g.b_i = b_{i+g},\quad g.b'_i = b'_{i+g},$$

for $i,g \in \Zq$. From the formula for $\Box_i$, we have, for $i \in \Zq$, 

$$a_i -a'_{i-1}+b_{i-1}-b'_i=0.$$

We define 
$$ \tau_i := a_i - a'_{i-1} = b'_i - b_{i-1},\quad \breve{\sigma}_i := a_i + a'_{i-1}, \quad \breve{\zeta}_i := b'_i + b_{i-1}.$$

We have

$$\sum_i \tau_i = \sum_i \breve{\sigma}_i = \sum_i \breve{\zeta}_i = 0$$
and

$$g.\tau_i = \tau_{i+g},\quad g.\breve{\sigma}_i = \breve{\sigma}_{i+g}, \quad g.\breve{\zeta}_i = \breve{\zeta}_{i+g}.$$

We observe that $\{\tau_i, \breve{\sigma_i}, \breve{\zeta_i};\; i\in \Zq,\; i\ne 0\}$ form a basis for $H_{1}^{(0)}(M, \mathbb Q)$. It will also be clear
in the next subsection that $\sigma, \zeta$ form a basis for $H_{1}^{st}(M, \mathbb Q)$, and that $\sigma^{\flat}, \zeta^{\flat}$ form a basis for a subspace of $H_1(M,\Sigma, \mathbb Q)$ complementing  $H_1(M, \mathbb Q)$ and invariant under the affine group ${\rm Aff} (M, \omega)$.

\subsection{Action of  ${\rm Aff} (M,\omega)$ on $H_1(M,\Sigma,\mathbb{Z})$} 

When $q=3$, we denote by $\wt S$, $\wt T$ the elements of ${\rm Aff}_{(1)} (M,\omega)$ with linear part $S,T$ respectively. When $q\geq 5$, we denote
by $ \wt S^2, \wt T^2, \wt J$ the elements of ${\rm Aff}_{(1)} (M,\omega)$ with linear part $S^2,T^2,J$ respectively (although $\wt S, \wt T$ are not defined in this case).\\

When $q=3$, one has

\medskip

\begin{center}
{\renewcommand{\arraystretch}{1.5}
\renewcommand{\tabcolsep}{0.2cm}
\begin{tabular}{|c|c|c|c|c|}
\hline
 & $p(\sigma_i)$ & $p(\sigma'_i)$ & $p(\zeta_i)$ & $p(\zeta'_i)$ \\
\hline
$\wt S$&$p(\sigma_i)+ p(\zeta'_{i-1})$&$p(\sigma'_i)+ p(\zeta_{i-1})$&$p(\zeta'_{i-1})$& $p(\zeta_i)$  \\
\hline
$\wt T$&$p(\sigma'_{i+1})$&$p(\sigma_i)$&$p(\zeta_{i})+p(\sigma'_{i+1})$& $p(\zeta'_i)+p(\sigma_{i+1})$  \\
\hline
\end{tabular}}
\end{center}

\medskip

\begin{center}
{\renewcommand{\arraystretch}{1.5}
\renewcommand{\tabcolsep}{0.2cm}
\begin{tabular}{|c|c|c|c|c|}
\hline
 &$\sigma$&$\zeta$&$\sigma^{\flat}$&$\zeta^{\flat}$ \\
\hline
$\wt S$&$\sigma +\zeta$&$\zeta$&$\sigma^{\flat}-\zeta^{\flat}$&$-\zeta^{\flat}$  \\
\hline
$\wt T$&$\sigma $&$\zeta +\sigma$&$-\sigma^{\flat}$&$\zeta^{\flat} -\sigma^{\flat}$  \\
\hline
\end{tabular}}
\end{center}

\medskip

\begin{center}
{\renewcommand{\arraystretch}{1.5}
\renewcommand{\tabcolsep}{0.2cm}
\begin{tabular}{|c|c|c|c|c|}
\hline
 & $a_i$&$a'_i$&$b_i$&$b'_i$ \\
\hline
$\wt S$&$a_i +b'_{i-1}$&$a'_i +b_{i-1}$&$b'_{i-1}$& $b_i$  \\
\hline
$\wt T$&$a'_{i+1}$&$a_i$&$b_i +a'_{i+1}$& $b'_i +a_{i+1}$  \\
\hline
\end{tabular}}
\end{center}

\medskip

\begin{center}
{\renewcommand{\arraystretch}{1.5}
\renewcommand{\tabcolsep}{0.2cm}
\begin{tabular}{|c|c|c|c|}
\hline
 & $\tau_i$&$\breve{\sigma_i}$&$ \breve{\zeta_i}$ \\
\hline
$\wt S$&$-\tau_{i+1} $&$\breve{\sigma_i} + \breve{\zeta}_{i-1} $&$ \breve{\zeta}_{i+1}$  \\
\hline
$\wt T$&$ -\tau_{i-1}$&$\breve{\sigma}_{i-1}$& $\breve{\zeta}_i + \breve{\sigma}_{i+1}$  \\
\hline
\end{tabular}}
\end{center}

\medskip

When $q\geq 5$, one has

\medskip

\begin{center}
{\renewcommand{\arraystretch}{1.5}
\renewcommand{\tabcolsep}{0.2cm}
\begin{tabular}{|c|c|c|c|c|}
\hline
 & $p(\sigma_i)$ & $p(\sigma'_i)$ & $p(\zeta_i)$ & $p(\zeta'_i)$ \\
\hline
$\wt S^2$&$p(\sigma_i+\zeta_{i-1}+\zeta'_{i-1})$&$p(\sigma'_i+\zeta_{i-1} +\zeta'_{i+1})$&$p(\zeta_{i-1})$& $p(\zeta'_{i-1})$  \\
\hline
$\wt T^2$&$p(\sigma_{i+1})$&$p(\sigma'_{i+1})$&$p(\zeta_{i}+\sigma_{i+1}+\sigma'_{i+1})$& $p(\zeta'_i+\sigma_{i+1}+\sigma'_{i-1})$  \\
\hline
$\wt J$&$p(\zeta_{i-1})$&$p(\zeta'_{i+1})$&$-p(\sigma'_i)$&$-p(\sigma_i)$ \\
\hline
\end{tabular}}
\end{center}

\medskip

\begin{center}
{\renewcommand{\arraystretch}{1.5}
\renewcommand{\tabcolsep}{0.2cm}
\begin{tabular}{|c|c|c|c|c|}
\hline
 &$\sigma$&$\zeta$&$\sigma^{\flat}$&$\zeta^{\flat}$ \\
\hline
$\wt S^2$&$\sigma +2\zeta$&$\zeta$&$\sigma^{\flat}$&$\zeta^{\flat}$  \\
\hline
$\wt T^2$&$\sigma $&$\zeta +2\sigma$&$\sigma^{\flat}$&$\zeta^{\flat}$  \\
\hline
$\wt J$&$\zeta$&$-\sigma$&$\zeta^{\flat}$&$\sigma^{\flat}$\\
\hline
\end{tabular}}
\end{center}

\medskip

\begin{center}
{\renewcommand{\arraystretch}{1.5}
\renewcommand{\tabcolsep}{0.2cm}
\begin{tabular}{|c|c|c|c|c|}
\hline
 & $a_i$&$a'_i$&$b_i$&$b'_i$ \\
\hline
$\wt S^2$&$a_i +b_{i-1}+b'_{i-1}$&$a'_i +b_{i-1}+b'_{i+1}$&$b_{i-1}$& $b'_{i-1}$  \\
\hline
$\wt T^2$&$a_{i+1}$&$a'_{i+1}$&$b_i +a_{i+1}+a'_{i+1}$& $b'_i +a_{i+1}+a'_{i-1}$  \\
\hline
$\wt J$&$b_{i-1}$&$b'_{i+1}$&$-a'_i$&$-a_i$\\
\hline
\end{tabular}}
\end{center}

\medskip

\begin{center}
{\renewcommand{\arraystretch}{1.5}
\renewcommand{\tabcolsep}{0.2cm}
\begin{tabular}{|c|c|c|c|}
\hline
 & $\tau_i$&$\breve{\sigma}_i$&$ \breve{\zeta}_i$ \\
\hline
$\wt S^2$&$\tau_{i-1} $&$\breve{\sigma}_i +  \breve{\zeta}_i+\breve{\zeta}_{i-1} $&$ \breve{\zeta}_{i-1}$  \\
\hline
$\wt T^2$&$ \tau_{i+1}$&$\breve{\sigma}_{i+1}$& $\breve{\zeta}_i + \breve{\sigma}_i+\breve{\sigma}_{i+1}$  \\
\hline
$\wt J$&$-\tau_i$&$\breve{\zeta}_i$&$-\breve{\sigma}_i$ \\
\hline
\end{tabular}}
\end{center}

\medskip

\subsection{The subspaces $H^{st}_1$ and $H_{rel}$} 

The subspace $H^{st}_1$ of $H_1(M,\Sigma,\mathbb{Q})$ is spanned by $\sigma$ and $\zeta$. The action of ${\rm Aff} (M,\omega)$
on this subspace is given by the homomorphism from ${\rm Aff} (M,\omega)$ to $SL(M,\omega) \subset SL(2,\mathbb Z)$ and the standard
action of $SL(2,\mathbb Z)$ on $\mathbb Z \sigma \oplus \mathbb Z \zeta$.\\

The subspace $H_{rel}$ spanned by $\sigma^{\flat}$ and $\zeta^{\flat}$ supplements $H_1(M,\mathbb Q )$ in $H_1(M,\Sigma, \mathbb Q)$
and is invariant under the action of ${\rm Aff} (M,\omega)$. The action of ${\rm Aff} (M,\omega)$ on $H_{rel}$ is given by the permutation group of $\Sigma = \{A_{0,1}, A_{1,0},A_{1,1} \}$ when $q=3$, and by the permutation group of $\{A_{0,1}, A_{1,0}\}$ when
$q \geq 5$. In both cases, the kernel of the action is the subgroup $\mathbb Z /q \times \Gamma(2)$ of ${\rm Aff} (M,\omega)$.\\

\subsection{The subspace $H_{\tau}$}

Denote by $H_{\tau}$ the $(q-1)$-dimensional subspace of $H_1(M,\Sigma,\mathbb{Q})$ spanned by the $\tau_i$, $i\in \mathbb Z/q$ (recall
that $\sum_i \tau_i =0$). The formulas above show that $H_{\tau}$ is invariant under the action of ${\rm Aff} (M,\omega)$ and that 
 ${\rm Aff} (M,\omega)$ acts on this subspace through the finite cyclic group $\mathbb Z /2q$. The generator $1$ of  $\mathbb Z /2q$
acts by $\tau_i \mapsto -\tau_{i+\frac{q+1}{2}}$. The homomorphism from  ${\rm Aff} (M,\omega) \simeq \mathbb Z /q \times SL(M,\omega)$ to $\mathbb Z /2q$ is given by
$$ (g,{\rm Id}) \mapsto 2g,\quad (0,\wt T) \mapsto 1, \quad (0,\wt S) \mapsto -1,$$
when $q=3$ and
$$ (g,{\rm Id}) \mapsto 2g,\quad (0,\wt T^2) \mapsto 2, \quad (0,\wt S^2) \mapsto -2,\quad  (0,\wt J) \mapsto q, $$
when $q \geq 5$.

\subsection{The subspace $\breve{H}$}

We denote by $\breve H$ the $(2q-2)$-dimensional subspace of $H_1(M,\mathbb{Q})$ spanned by the $\breve{\sigma}_i, \breve{\zeta}_i$, $i \in \mathbb Z /q$. We have thus
$$ H_1(M,\Sigma,\mathbb{Q}) = H_{rel} \oplus H_1(M,\mathbb{Q}),$$
$$ H_1(M,\mathbb{Q}) = H^{st}_1 \oplus H_{\tau} \oplus \breve H \;.$$

In this subsection, we discuss  general considerations valid for all $q \geq 3$. The special case $q=3$ is treated in the next subsection.

\medskip

Let $\rho$ be a $q$-th root of unity different from $1$. Let $\breve{\sigma}(\rho)$, $\breve{\zeta}(\rho)$ the elements of 
$\breve{H} \otimes \mathbb C$ defined by
$$ \breve{\sigma}(\rho):= \sum_{\mathbb Z /q} \rho^{-i}\breve{\sigma}_i,\quad  \breve{\zeta}(\rho):= \sum_{\mathbb Z /q} \rho^{-i}\breve{\zeta}_i,$$
and let $\breve{H}(\rho)$ the subspace of $\breve{H} \otimes \mathbb C$ spanned by $\breve{\sigma}(\rho)$and  $\breve{\zeta}(\rho)$.
From the formulas above, we have, for $g \in \mathbb Z /q$
$$g.\bs(\rho) = \rho^g \bs(\rho), \quad g.\bz(\rho) = \rho^g \bz(\rho).$$

When $q=3$, we have
\begin{alignat*}{9}
&\wt S (\bs(\rho)) &\;=\; &\bs(\rho) + \rho^{-1} \bz(\rho), \quad \quad  \quad &\wt S (\bz(\rho)) &\;=\; \rho \; \bz (\rho), &\\
&\wt T (\bs(\rho)) &\;=\; &\rho ^{-1} \bs (\rho),  \quad &\wt T (\bz (\rho)) &\;=\; \bz (\rho) + \rho  \; \bs (\rho)&,
\end{alignat*}
while for $q\geq 3$, we have
\begin{alignat*}{9}
&\wt S^2 (\bs(\rho)) & \;=\; & \bs(\rho)  + (1+\rho^{-1}) \bz(\rho) , \quad \quad &\wt S^2 (\bz(\rho)) & \;=\;  \rho^{-1}  \bz (\rho), & \\
&\wt T^2 (\bs(\rho)) &\;=\; &\rho \; \bs (\rho),   \quad \quad &\wt T^2 (\bz (\rho)) &\;=\; \bz (\rho) + (1+\rho)  \; \bs (\rho),\\
&\wt J (\bs(\rho)) &\;=\;& \bz(\rho), \quad \quad &\wt J (\bz(\rho)) &\;=\; -\bs(\rho). &
\end{alignat*}

Each of the $(q-1)$ $2$-dimensional subspaces $\breve{H}(\rho)$ is thus invariant under the action of the affine group, and their direct sum is equal to $\breve{H} \otimes \mathbb C$. Over $\mathbb R$, $\breve{H} \otimes \mathbb R$ splits into the $\frac{q-1}{2}$
$4$-dimensional subspaces induced by the $\breve{H}(\rho) \oplus \breve{H}(\rho^{-1})$.

\begin{remark}
The determinant of the restriction to $\breve{H}(\rho)$ of $\wt S^2 \; \wt T^2$ is equal to $1$ and its trace is equal to $2(1+\rho + \rho^{-1})$. When $\rho = \exp \frac{2i \pi}{q}$ and $q \geq 5$, the trace is $>2$. In particular, the affine group does not act on $H_1^{(0)}$ through a finite group. Therefore, by~\cite{Moller}, the square-tiled surface is not totally degenerate for $q\geq 5$.
\end{remark}

\subsection{The subspace $\breve{H}$ for $q=3$}

We assume in this subsection that $q=3$. The formulas of Subsection 3.3 show that the set of $24$ vectors in $\breve{H}$:

$$R\;= \;\{\pm \bs_i,\;\pm \bz_i,\; \pm(\bs_i +\bz_{i-1}), \; \pm(\bs_i-\bz_{i+1}) \;; i \in \Zq \}$$
is invariant under the action of ${\rm Aff} (M,\omega)$. More precisely, we have

\medskip 

\begin{center}
{\renewcommand{\arraystretch}{1.5}
\renewcommand{\tabcolsep}{0.2cm}
\begin{tabular}{|c|c|c|c|c|}
\hline
 & $\bs_i$&$\bz_i$&$\bs_i + \bz_{i-1}$&$\bs_i-\bz_{i+1}$ \\
\hline
$g$&$\bs_{i+g}$&$\bz_{i+g}$&$\bs_{i+g} + \bz_{i-1+g}$&$\bs_{i+g}-\bz_{i+1+g}$ \\
\hline
$\wt S$&$\bs_i + \bz_{i-1}$&$\bz_{i+1}$&$\bs_i-\bz_{i+1}$& $\bs_i$  \\
\hline
$\wt T$&$\bs_{i-1}$&$\bz_i + \bs_{i+1}$&$\bz_{i+2} - \bs_{i+1}$& $-\bz_{i+1}$  \\
\hline
\end{tabular}}
\end{center}

\medskip

The set $R$ is a root system of $D_4$ type. More precisely, let $\mathbb V$ be the set of nonzero vectors in $(\mathbb Z /3)^2$. Define a map $\varepsilon: \mathbb V \rightarrow \breve H$ by
\begin{eqnarray*}
\varepsilon(1,0) &=& \frac12 (-\bs_2 +\bz_0 -\bz_1),\\
\varepsilon(1,1) &=& \frac12 (-\bs_2 -\bz_2),\\
\varepsilon(1,2) &=& \frac12 (-\bs_2 + \bz_2),\\
\varepsilon(0,1) &=& \frac12 (-\bs_0 + \bs_1 -\bz_2),
\end{eqnarray*}
and $\varepsilon(-v) = -\varepsilon(v)$ for $v \in \mathbb V$.

\medskip

One has then 
\begin{equation*}
R= \{ \varepsilon(v) + \varepsilon(v')\,; \; v,v' \in \mathbb V ,v \ne \pm v'\;\}.
\end{equation*}

The action of the affine group on $\breve H$ is therefore given by a homomorphism $Z$ from ${\rm Aff} (M,\omega)$ to the automorphism group $A(R)$. This last group and the Weyl group $W(R) \subset A(R)$ have been described in Subsection 2.7. We will now describe $Z$.\\

Denote by $\overline S$, $\overline T$ the images of $S,T$ in $SL(2, \mathbb Z /3)$. This group acts on $\mathbb V$. One checks that,
for all $v \in \mathbb V$, one has

$$ \wt S ( \varepsilon(v)) = \varepsilon(\overline S (v)),\quad \quad \wt T ( \varepsilon(v)) = \varepsilon(\overline T (v)).$$

On the other hand, one has

$$Z(1).\varepsilon(1,0)=1.\varepsilon(1,0) = \frac12 (-\varepsilon(1,0) + \varepsilon(1,1) -\varepsilon(1,2) + \varepsilon (0,1)).$$

We obtain thus

\begin{lemma}
\begin{enumerate}
\item The inverse image by $Z$ of $W(R)$ in the affine group is the subgroup $\simeq SL(2, \mathbb Z)$ fixing the $A_i$.
\item The kernel of $Z$ is the congruence subgroup
$$ \Gamma(3):=\left\{M\equiv \textrm{Id}_{SL(2,\mathbb{Z})} \, \textrm{mod}\, 3\right\}.$$
\item The image of $Z$ is the product of $\mathbb Z /3 \subset A(R)/W(R) \simeq S_3$ by $SL(2,\mathbb Z /3)$, this last group
acting in the natural way on $\{\varepsilon (v); v\in \mathbb V\;\}$.
\end{enumerate}
\end{lemma}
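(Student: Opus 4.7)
The plan is to leverage the two displayed identities immediately preceding the lemma: namely $\wt S(\varepsilon(v))=\varepsilon(\overline S(v))$, $\wt T(\varepsilon(v))=\varepsilon(\overline T(v))$ for $v\in\mathbb V$, together with the explicit value of $Z(1).\varepsilon(1,0)$. The first two show that $\wt S$ and $\wt T$ act as signed permutations of the four axes $\{\pm\varepsilon(v)\}$ of the root system $R$, and the third shows that the automorphism $Z(1)$ mixes distinct $\varepsilon(v)$'s with half-integer coefficients. Recalling from Subsection~2.7 that $W(R)$ consists precisely of signed permutations of an orthonormal basis (here the four axes of $R$), the first observation places $Z(\wt S),Z(\wt T)\in W(R)$, while the second places $Z(1)\notin W(R)$.

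For part~(1), I use that ${\rm Aff}(M,\omega)=\mathbb Z/3\times SL(2,\mathbb Z)$ is a direct product (Subsection~3.1), and that the factor $SL(2,\mathbb Z)$ is exactly the subgroup fixing each $A_i$ (again Subsection~3.1, since $\mathbb Z/3$ acts by cyclic translation on the $A_i$). Since $Z$ sends $SL(2,\mathbb Z)$ into $W(R)$ and sends the two nontrivial elements of $\mathbb Z/3$ (which are $Z(1)$ and $Z(1)^{-1}$, both of the same $A(R)/W(R)$-coset) outside $W(R)$, the inverse image $Z^{-1}(W(R))$ is exactly $SL(2,\mathbb Z)$.

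For part~(2), by part~(1) the kernel of $Z$ lies in $SL(2,\mathbb Z)$. Using $\wt M(\varepsilon(v))=\varepsilon(\overline M(v))$ and the fact that $\varepsilon:\mathbb V\to\breve H$ is injective (it bijects $\mathbb V$ onto the set of roots of $R$ lying on the four coordinate axes, as is readily verified from its definition), the kernel consists of those $\wt M$ for which $\overline M$ acts trivially on $\mathbb V$, i.e.\ $\overline M={\rm Id}$ in $SL(2,\mathbb Z/3)$; this is precisely $\Gamma(3)$. For part~(3), the image is the product of $Z(\mathbb Z/3)$ (cyclic of order $3$, projecting by part~(1) onto the unique order-$3$ subgroup of $A(R)/W(R)\simeq S_3$) and $Z(SL(2,\mathbb Z))$ (which by part~(2) is isomorphic to $SL(2,\mathbb Z/3)$ and acts on $\{\varepsilon(v)\}$ through the natural action of $SL(2,\mathbb Z/3)$ on $\mathbb V$). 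These subgroups commute (direct product structure of ${\rm Aff}(M,\omega)$) and meet trivially (the first has trivial intersection with $W(R)$, which contains the second), giving an image of order $3\cdot 24=72$.

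The only nontrivial ingredient I would still need to justify is the equality $\wt S(\varepsilon(v))=\varepsilon(\overline S(v))$ and $\wt T(\varepsilon(v))=\varepsilon(\overline T(v))$: this is a direct but not entirely short verification, to be done by expanding each $\varepsilon(v)$ in the basis $\bs_i,\bz_i$ via the four defining formulas, applying the tables for $\wt S,\wt T$ in Subsection~3.3, and simplifying modulo $\sum_i\bs_i=\sum_i\bz_i=0$. This is where I expect the main bookkeeping burden to sit; once these identities and the isolated computation of $Z(1).\varepsilon(1,0)$ are in hand, the structural arguments above close the proof.
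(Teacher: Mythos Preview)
Your proof is correct and follows essentially the same approach as the paper, which simply notes that $Z(\wt S),Z(\wt T)\in W(R)$ (from the permutation formulas on the $\varepsilon(v)$) while $Z(1)\notin W(R)$ (from the half-integer coefficients), and then says that parts~(2) and~(3) follow from the formulas for $\wt S,\wt T$; you have merely spelled out those deductions. One small slip: $Z(1)$ and $Z(1)^{-1}$ lie in \emph{different} nontrivial cosets of $W(R)$ in $A(R)$ (the image in $A(R)/W(R)\simeq S_3$ has order $3$), though your argument only uses that both lie outside $W(R)$, which is what actually holds.
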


\begin{proof}
The formulas above show that the images by $Z$ of $\wt S,\, \wt T$ belong to $W(R)$, but that the image by $Z$ of the generator
$1$ of $\mathbb Z /3$ does not belong to $W(R)$. This proves the first assertion of the lemma. The other assertions of the lemma then follow from the formulas for $\wt S,\, \wt T$.
\end{proof} 

\begin{lemma}The intersection of the image of $Z$ with the Weyl group $W(R)$ is \emph{exactly} the subgroup of symplectic elements of $W(R)$ with respect to the restriction of the intersection form to $\breve H$.
\end{lemma}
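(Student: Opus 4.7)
The plan is to follow the strategy of the analogous lemma in Subsection~2.7. The starting observation is that $\text{Aff}(M,\omega)$ acts on $H_1(M,\mathbb{Q})$ through the mapping class group of $M$, and therefore preserves the algebraic intersection form. Restricting to the invariant subspace $\breve H$ shows that the image $Z(\text{Aff}(M,\omega))$ preserves the restriction of the intersection form to $\breve H$. A fortiori,
$$Z(\text{Aff}(M,\omega)) \cap W(R) \subseteq \text{Sp}(W(R)),$$
where $\text{Sp}(W(R))$ denotes the subgroup of elements of $W(R)$ which preserve the restricted form. By the previous lemma this intersection is isomorphic to $SL(2,\mathbb{Z}/3)$ and so has order $24$. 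The proof therefore reduces to the inequality $|\text{Sp}(W(R))|\leq 24$.

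The first step is to compute the restriction of the intersection form to $\breve H$ explicitly. Working in the basis $\{\breve\sigma_i,\breve\zeta_i\}_{i\in\Zq}$, the pairings $\langle\breve\sigma_i,\breve\zeta_j\rangle$ together with the self-pairings $\langle\breve\sigma_i,\breve\sigma_j\rangle$ and $\langle\breve\zeta_i,\breve\zeta_j\rangle$ can be read directly off Figure~\ref{coordinates} by counting signed intersections of the underlying horizontal and vertical cycles on the square-tiled surface. Translating via the formulas defining $\varepsilon\colon\mathbb{V}\to\breve H$, one obtains a compact intersection table on the four pairs $\{\pm\varepsilon(v)\}_{v\in\mathbb{V}}$.

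The second step is to determine $\text{Sp}(W(R))$. Every $w\in W(R)$ permutes the four pairs $\{\pm\varepsilon(v)\}$, which are naturally indexed by the four lines of $\mathbb{P}^1(\mathbb{F}_3)$, and multiplies each $\varepsilon(v)$ by an independent sign subject to the $W(D_4)$-parity constraint. The symplectic condition becomes a finite system of linear constraints on these sign-permutation data. A direct case analysis---exactly parallel to the one isolating $\widetilde Q$ of order $16$ in Subsection~2.7---shows that precisely $24$ elements of $W(R)$ satisfy all the constraints, and that they coincide with the image of the natural action of $SL(2,\mathbb{Z}/3)$ on $\mathbb{V}\simeq\mathbb{F}_3^2\setminus\{0\}$ through $\varepsilon$. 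Combined with the previous lemma this forces equality $Z(\text{Aff}(M,\omega))\cap W(R)=\text{Sp}(W(R))$.

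The only non-trivial ingredient is the bookkeeping in the two computations above; once the intersection table in the $\varepsilon(v)$-basis is in hand, the constraints decouple neatly (the form pairs each $\varepsilon(v)$ with a unique partner, as in the $g=3$ case of Section~2) and the enumeration of $\text{Sp}(W(R))$ becomes mechanical. I therefore expect the main effort to lie in verifying the numerical intersection table, not in the subsequent group-theoretic argument.
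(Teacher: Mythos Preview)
Your overall strategy is correct and matches the paper's: the inclusion $Z(\mathrm{Aff}(M,\omega))\cap W(R)\subseteq \mathrm{Sp}(W(R))$ follows because the affine group preserves the intersection form, and by the preceding lemma this intersection is $SL(2,\mathbb Z/3)$ of order $24$; so it remains to show $|\mathrm{Sp}(W(R))|\le 24$, which requires computing the form on the $\varepsilon(v)$.

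However, your expectation about how that final count goes is wrong. You write that ``the form pairs each $\varepsilon(v)$ with a unique partner, as in the $g=3$ case of Section~2'', anticipating a block-diagonal symplectic structure and a parallel to the $\widetilde Q$ argument. That is not what happens here. The paper computes
\[
(\varepsilon(v),\varepsilon(v')) \;=\; \begin{cases} +2 & \text{if } \det(v,v')\equiv 1 \bmod 3,\\ -2 & \text{if } \det(v,v')\equiv -1 \bmod 3,\end{cases}
\]
so \emph{every} pair of non-proportional $\varepsilon(v)$'s has nonzero pairing. There are no isotropic $2$-planes spanned by basis vectors, and the Section~2.7 trick of ``symplectic maps preserve the pair of symplectic $2$-planes'' does not apply.

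The paper instead uses a coset argument built on the two exact sequences
\[
1\to(\mathbb Z/2)^3\to W(R)\to S_4\to 1,\qquad 1\to\mathbb Z/2\to SL(2,\mathbb Z/3)\to A_4\to 1.
\]
Let $H\subset W(R)$ be the subgroup of elements fixing $\varepsilon(1,0)$ and sending $\varepsilon(0,1)$ to $\pm\varepsilon(0,1)$; one checks $|H|=8$ and $H\cap SL(2,\mathbb Z/3)=\{1\}$, so $W(R)=H\cdot SL(2,\mathbb Z/3)$ as a set. Thus it suffices to show $H\cap\mathrm{Sp}(W(R))=\{1\}$. Using the explicit form above: if $g\in H$ is symplectic, then $(\varepsilon(1,0),g(\varepsilon(0,1)))=(\varepsilon(1,0),\varepsilon(0,1))=2$ forces $g(\varepsilon(0,1))=+\varepsilon(0,1)$, and then the pairings of $\varepsilon(1,1)$ and $\varepsilon(1,2)$ with both $\varepsilon(1,0)$ and $\varepsilon(0,1)$ force $g(\varepsilon(1,1))=\varepsilon(1,1)$ and $g(\varepsilon(1,2))=\varepsilon(1,2)$, hence $g=1$.

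So your plan is sound up to the point where you predict the shape of the intersection table; from there the argument needs the determinant formula above and the stabilizer/coset step rather than the symplectic-plane decoupling you had in mind.
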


\begin{proof}We begin by computing  the intersection form on the vectors $\varepsilon(v)$. We write $\breve{\sigma}_i = \gamma_i - \gamma_{i+1}$ and $\breve{\zeta_i}= \delta_{i-1}-\delta_i$ where $\gamma_i:=\sigma_i+\sigma_{i-1}'$ and $\delta_i:=\zeta_i+\zeta_{i+1}'$. A direct calculation (with Figure~\ref{coordinates}) shows that, for $i\in \mathbb Z /3$:  

$$(\gamma_i,\gamma_{i+1}) = (\delta_i,\delta_{i+1})=2\;,$$
$$(\gamma_{i},\delta_{i})=(\gamma_{i},\delta_{i+1})=-(\gamma_{i},\delta_{i-1})=1\;.$$

It follows that, for $i\in \mathbb Z /3$:

$$(\breve{\sigma}_i,\breve{\sigma}_{i+1})=(\breve{\zeta}_i,\breve{\zeta}_{i+1})=6\;,$$
$$(\breve{\sigma}_{i},\breve{\zeta}_{i})=-4\,,\quad (\breve{\sigma}_{i},\breve{\zeta}_{i+1})=(\breve{\sigma}_{i},\breve{\zeta}_{i-1})=2\;.$$

From this, we see that, for $v,v' \in \mathbb V$, $v \ne \pm v'$:

$$ \begin{array}{ll}
&  (\varepsilon(v),\varepsilon(v'))=\left\{ \begin{array}{l} 2 \quad \quad \; \;\hbox{if} \; \det (v,v')=1 \;\;\hbox{mod}\;3,\\[2ex]
-2 \quad \quad \hbox{if} \; \det (v,v')=-1 \;\hbox{mod}\;3.
 \end{array} \right. 
\end{array}$$

As expected, the action of $SL(2,\mathbb Z /3)$ preserves the symplectic form. We want to show that no other element of $W(R)$ does so. We have the exact sequences (see Subsection 2.7):
$$1 \longrightarrow (\mathbb Z /2)^3 \longrightarrow W(R) \longrightarrow S_4 \longrightarrow 1\;,$$
$$1 \longrightarrow \mathbb Z /2 \longrightarrow SL(2,\mathbb Z /3) \longrightarrow A_4 \longrightarrow 1 \;.$$

Thus, it is sufficient to see that, if $g \in W(R)$ preserves the symplectic form and $g(\varepsilon(1,0))= \varepsilon(1,0)$, $g(\varepsilon(0,1))= \eta\;\varepsilon(0,1)$, $\eta = \pm 1$, then $g= 1$. Indeed, as $g$ preserves the symplectic form, we must have $\eta = 1$, $g(\varepsilon(1,1))= \varepsilon(1,1)$, $g(\varepsilon(1,-1))= \varepsilon(1,-1)$.
\end{proof}

The proof of Theorem ~\ref{t.B} is now complete.

\appendix\section{Parity of the spin structure}\label{a.g4parity}
 After the work of Kontsevich and Zorich~\cite{KZ} on the complete classification of the connected components of the strata of the moduli space of Abelian differentials, we know that the stratum $H(1,1,1,1)$ of $(M_3,\omega_{(3)})$ is connected, but the stratum $H(2,2,2)$ of $(M_4,\omega_{(4)})$ has exactly two connected components distinguished by a topological invariant called the \emph{parity of the spin structure}. After  recalling the definition of the parity of the spin structure , we will show that the parity of spin of $(M_4,\omega_{(4)})$ is \emph{even}.

Let $(M,\omega)$ be a translation surface such that the zeros $p_1,\dots,p_n$ of $\omega$ have even orders $2l_1,\dots,2l_n$. For  an oriented smoothly immersed closed curve $\gamma$ avoiding the zeros of $\omega$, the index $\textrm{ind}_{\omega}(\gamma)\in\mathbb{Z}$ is the integer such
that the total change of angle between the tangent vector of $\gamma$ and the rightwards horizontal direction determined by $\omega$ is $2\pi\cdot\textrm{ind}_{\omega}(\gamma)$. Because the zeros of $\omega$ have even order, the parity of $\textrm{ind}_{\omega}(\gamma)$ depends only on the class $[\gamma]$ of $\gamma$ in $H_1(M,\mathbb Z)$ and will be denoted by $\textrm{ind}_{\omega}([\gamma]) \in \mathbb Z /2$.

In this context, fix $(\alpha_i, \beta_i)_{i=1,\dots,g}$ a symplectic basis of the integral homology group $H_1(M,\mathbb{Z})$. Let
\begin{equation*}
\phi(\omega):=\sum\limits_{i=1}^g (\textrm{ind}(\alpha_i)+1)\cdot(\textrm{ind}(\beta_i)+1) \quad (\textrm{mod}\, 2).
\end{equation*}
The quantity $\phi(\omega)\in\mathbb{Z}/2$ is called the \emph{parity of the spin structure} of $\omega$. It can be shown that $\phi(\omega)$ doesn't depend on the choice of the symplectic basis $(\alpha_i, \beta_i)_{i=1,\dots,g}$ (in fact it depends only on the spin structure $\kappa(\omega)=l_1\cdot p_1+\dots +l_n\cdot p_n\in \textrm{Pic}(M)$ of $\omega$). For further details and alternative definitions of the parity of the spin structure (giving the motivation for the name ``parity of the spin structure'' of $\phi(\omega)$), see~\cite{KZ}.

We now compute the parity of the spin structure of $(M_4,\omega_{(4)})$. Consider the following picture describing the oriented paths $\sigma_i$, $\zeta_i$, $\sigma_i'$, $\zeta_i'$ ($i \in \mathbb Z /3$) and their relative positions, when one makes a counter-clockwise $6\pi$ turn around the singularity 
$A_{1,1}$ (see Figure~\ref{FM-separatrix} below).
\begin{figure}[!h]
\centering
\includegraphics[scale=0.3]{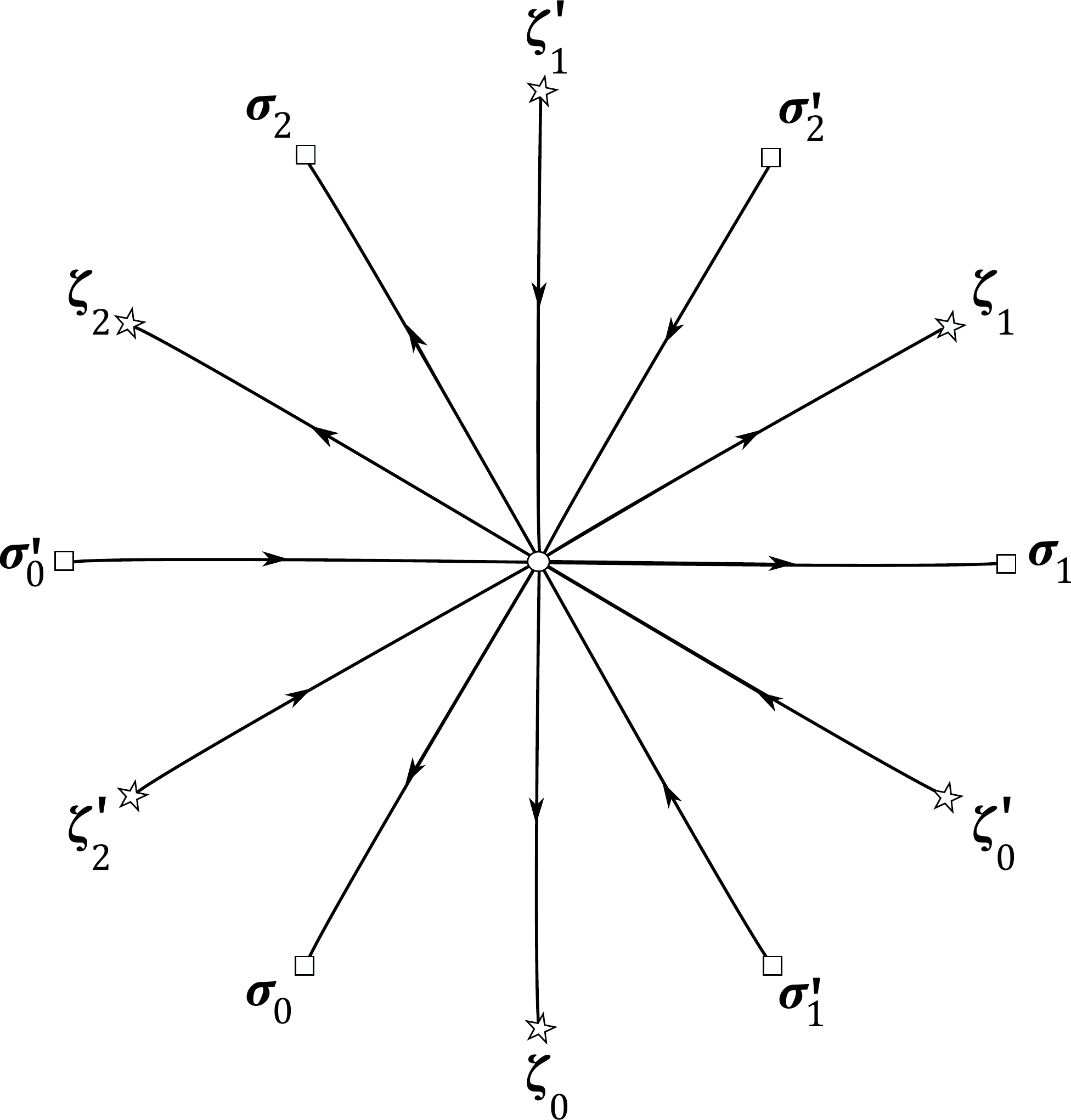}
\caption{Ingoing and outgoing separatrix rays nearby the singularity $A_{1,1}$.}\label{FM-separatrix}
\end{figure}

Here, the singularity $A_{1,1}$ is located at the center of the figure; it  is linked to $A_{0,1}$ by the $\sigma_i$, $\sigma'_i$ and to $A_{1,0}$
by the $\zeta_i$, $\zeta'_i$.\\ 

A similar picture can be depicted nearby the other two singularities $A_{1,0}$ and $A_{0,1}$.

\begin{figure}[!h]
\centering
\includegraphics[scale=0.32]{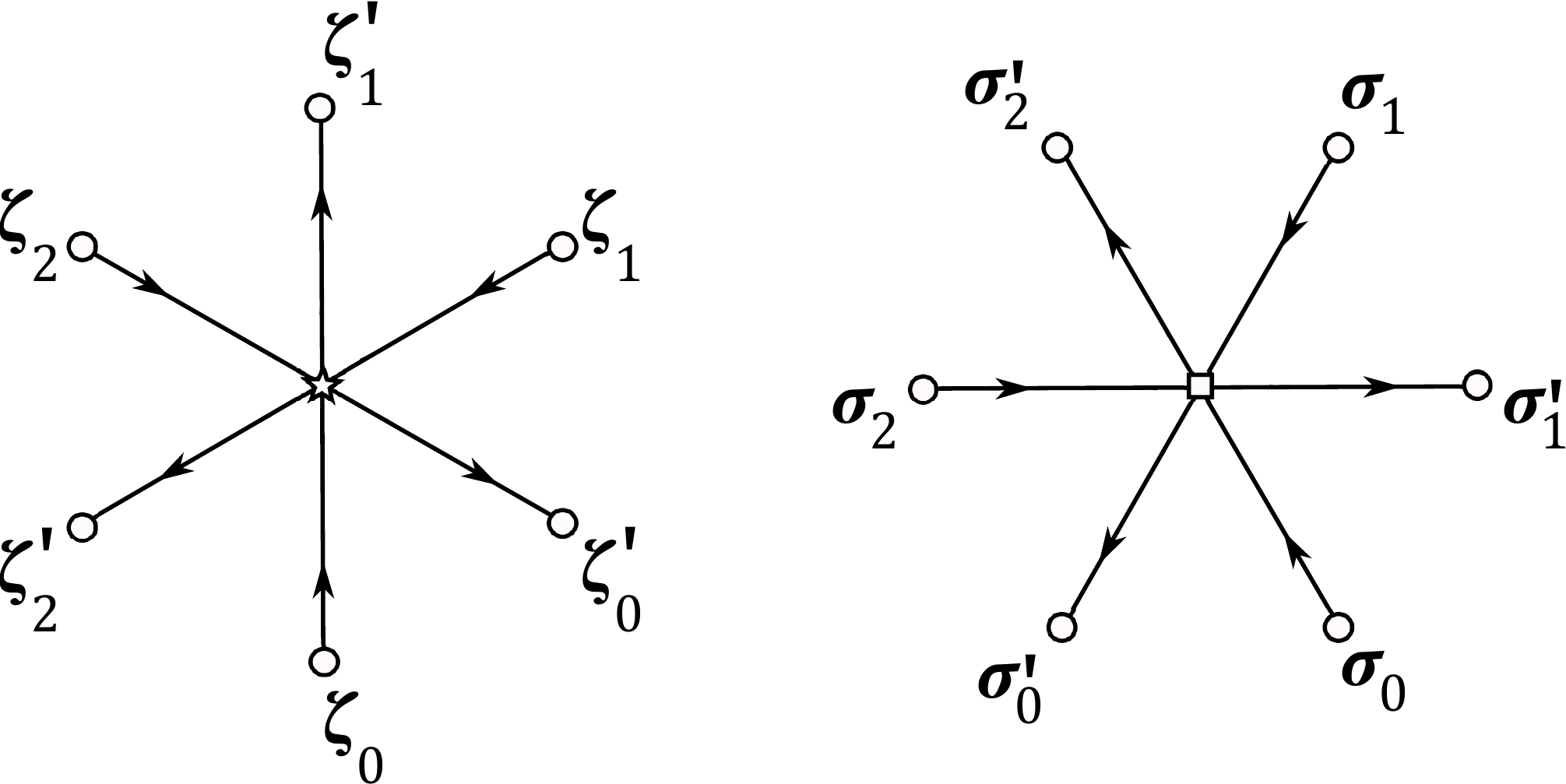}
\caption{Separatrix diagrams nearby the singularities $A_{1,0}$ and $A_{0,1}$.}\label{FM-separatrices}
\end{figure}

Here, $A_{1,0}$ and $A_{0,1}$ (resp.) are located at the center of the left-hand side and right-side of the picture (resp.).\\

Let 
\begin{align*} 
&\alpha_1:=p(\sigma_1+\sigma_1'),\;\;  &\alpha_2&:=p(\sigma_2+\sigma_2'),\;\;   &\alpha_3&:=p(\sigma_0+\sigma_0'),\;\; &\alpha&:=p(\sigma_2+\sigma_0') ,\\ 
&\beta_1:=p(\zeta_0+\zeta_0'),\;\;  &\beta_2&:=p(\zeta_1+\zeta_1'),\;\; &\beta_3&:=p(\zeta_2+\zeta_2'),\;\; &\beta&:=p(\zeta_1+\zeta_0').
\end{align*}

It is easy to check that $(\alpha_1,\alpha_2,\alpha_3,\alpha,\beta_1,\beta_2,\beta_3,\beta)$ is an integral basis of $H_1(M,\mathbb Z)$. From 
Figure~\ref{FM-separatrix}, the intersection pairing is given by

\medskip

\begin{center}
{\renewcommand{\arraystretch}{1.5}
\renewcommand{\tabcolsep}{0.2cm}
\begin{tabular}{|c|c|c|c|c|c|c|c|c|}
\hline
$(.,.)$ & $\alpha_1$ & $\alpha_2$ & $\alpha_3$ & $\alpha$ & $\beta_1$  & $\beta_2$ & $\beta_3$ & $\beta$\\
\hline
$\alpha_1$ & $0$ & $0$ & $0$ & $0$ & $1$ & $0$ & $0$ & $1$\\
\hline
$\alpha_2$ & $0$ & $0$ & $0$ & $-1$ & $0$ & $1$ & $0$ & $0$\\
\hline
$\alpha_3$ & $0$ & $0$ & $0$ & $1$ & $0$ & $0$ & $1$ & $0$ \\
\hline
$\alpha$ & $0$ & $1$ & $-1$ & $0$ & $0$ & $0$ & $1$ & $0$ \\
\hline
$\beta_1$ & $-1$ & $0$ & $0$ & $0$ & $0$ & $0$ & $0$ & $-1$ \\
\hline
$\beta_2$ & $0$ & $-1$ & $0$ & $0$ & $0$ & $0$ & $0$ & $1$\\
\hline
$\beta_3$ & $0$ & $0$ & $-1$ & $-1$ & $0$ & $0$ & $0$ & $0$\\
\hline
$\beta$ & $-1$ & $0$ & $0$ & $0$ & $1$ & $-1$ & $0$ & $0$\\
\hline
\end{tabular}}
\end{center}

\medskip

We define
$$\alpha_4:=\alpha -\alpha_3 +\beta_2 -\beta_3, \quad \beta_4:=\beta-\alpha_1+\alpha_2-\beta_1\;.$$
Then $(\alpha_i,\beta_i)_{i=1,\dots,4}$ is a symplectic basis of $H_1(M_4,\mathbb{Z})$.

It is clear that $\textrm{ind}_{\omega}(\alpha_i)=\textrm{ind}_{\omega}(\beta_i)=0$ for $i=1,2,3$. 
Furthermore, if we follow a path close to $-\sigma_0$, $-\zeta'_2$, $-\zeta_2$, $\zeta_1$, $\zeta'_1$ and $\sigma_2$ (in this order) 
avoiding the zeroes of $\omega_{(4)}$, we see that $\textrm{ind}_{\omega}(\alpha_4)=0$. If we follow a similar path close to $\zeta_1$, $-\zeta_0$, 
$-\sigma'_1$, $-\sigma_1$, $\sigma_2$, $\sigma'_2$, (in this order), we see that $\textrm{ind}_{\omega}(\beta_4)=0$.

We conclude that the parity of the spin of $(M_4,\omega_{(4)})$ is even.
\begin{remark}
Another way to compute the parity of the spin structure could have been to determine the Rauzy class of an interval exchange map obtained as the first return map of a nearly vertical linear flow of $(M,\omega)$ on an appropriate transversal. See Appendix C of~\cite{Zorich2} for a detailed explanation of this method (including basic definitions and main technical steps). For sake of convenience of the reader, we compute below some combinatorial data associated to $(M_4,\omega_{(4)})$ needed to perform the arguments of~\cite{Zorich2}.

Fix a small angle $\theta$; we consider, as shown on Figure~\ref{Rauzy-g4} below, the first return of the upwards vertical flow for $e^{i\theta}\omega_{(4)}$ on a transverse interval $L$ which is horizontal for $e^{i\theta}\omega_{(4)}$.

\begin{figure}[!h]
\centering
\includegraphics[scale=0.35]{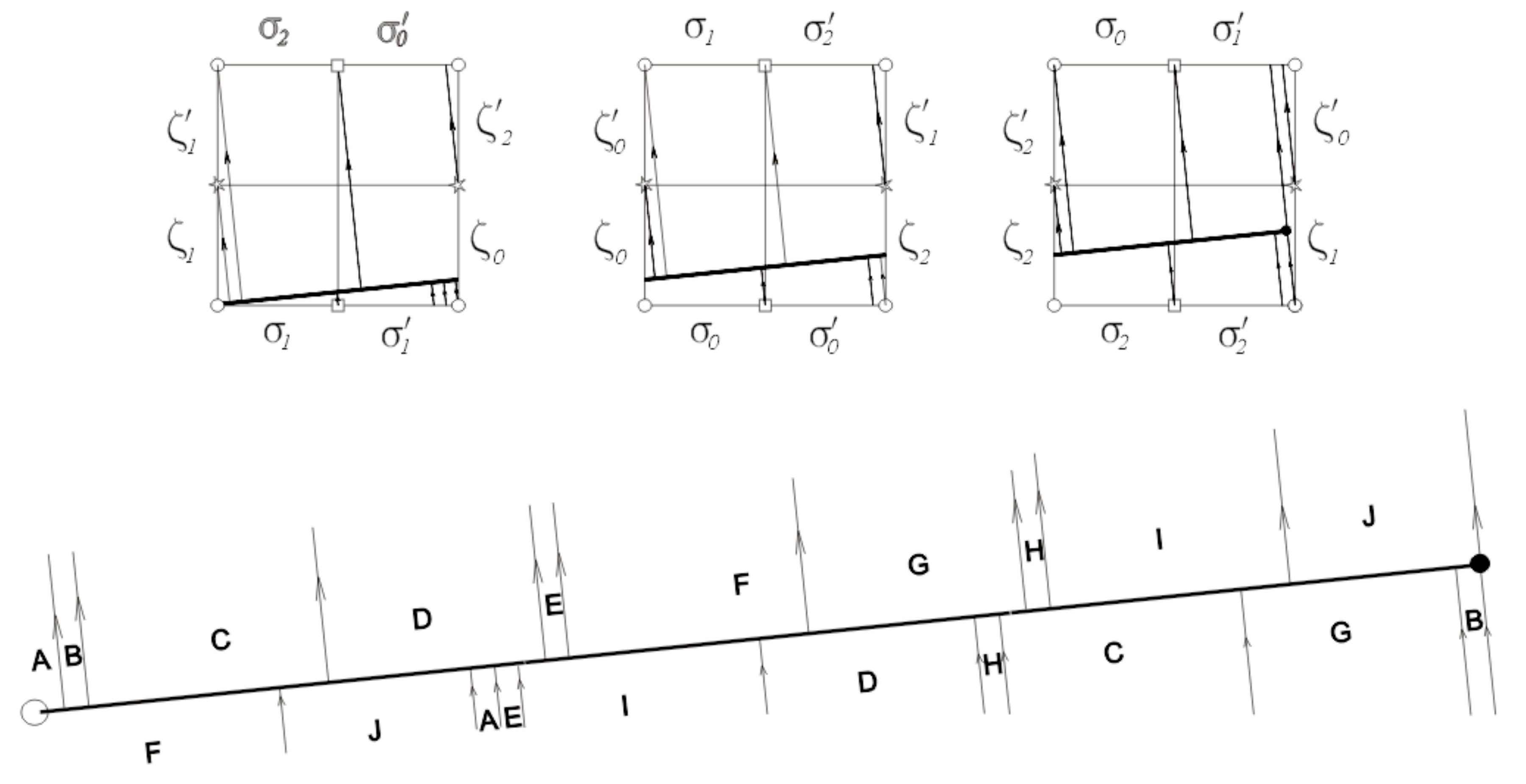}
\caption{The transverse interval $L$ and the associated return map }\label{Rauzy-g4}
\end{figure}

Here, the interval $L$ starts at the singularity $\bigcirc$ and ends at the black diamond dot. Also, we indicated the "top" partition of $L$ (corresponding to the domain of the return map) into $10$ intervals (named $A$, $B$, $C$, $D$, $E$, $F$, $G$, $H$, $I$ and $J$) and their images by the return map (the "bottom" partition of $L$). The combinatorial data of the return map is thus
\begin{displaymath}
\left( \begin{array}{cccccccccc}
A & B & C & D & E & F & G & H & I & J \\
F & J & A & E & I & D & H & C & G & B \\
\end{array} \right)
\end{displaymath}
\end{remark}

\section{$\textrm{Aff}(M,\omega)$-invariant supplements to $H_1(M,\mathbb{R})$}\label{a.supplement}
We have seen that, for both $(M_3,\Sigma_{(3)},\omega_{(3)})$ and $(M_4,\Sigma_{(4)},\omega_{(4)})$, the absolute homology group $H_1(M,\mathbb{R})$ admits a $\textrm{Aff}(M_k,\omega_{(k)})$-invariant supplement $H_{rel}$ within the relative homology group $H_1(M,\Sigma,\mathbb{R})$. This appendix  shows a simple example of a square-tiled surface of genus $2$ with two simple zeros where this does not hold.\\

%\begin{figure}[!h]
%\centering
%\includegraphics[scale=0.3]{supplement}
%\caption{L-shaped square-tiled surface with $3$ unit squares and $3$ marked points.}\label{supplement}
%\end{figure}
We consider the square-tiled surface $(M,\omega)$ obtained  from the polygon $P$ with vertices $A_0=(0,0),A^t_1= (1,2),A^t_2=(2,3),A^t_3=(3,3),A^t_4=(4,2),A_5=(5,1),A^b_4=(4,-1),A^b_3=(3,-2),A^b_2=(2,-2), A^b_1=(1,-1)$ by identifying parallel opposite sides (this sides are also symmetric w.r.t. the center of symmetry $(\frac 52 , \frac 12 )$ of the polygon, but the identifications are through translations). The two simple zeros on the surface are $A_{even} = A_0=A^t_2=A^t_4=A^b_4=A^b_2$ and 
$A_{odd} = A^t_1=A^t_3=A_5=A^b_3=A^b_1$. We have thus $\Sigma = \{A_{even}, A_{odd} \}$. The reader can find a pictorial description of $(M,\omega)$ in Figure~\ref{supplement-H(1,1)} below:
\begin{figure}[!h]
\centering
\includegraphics[scale=0.35]{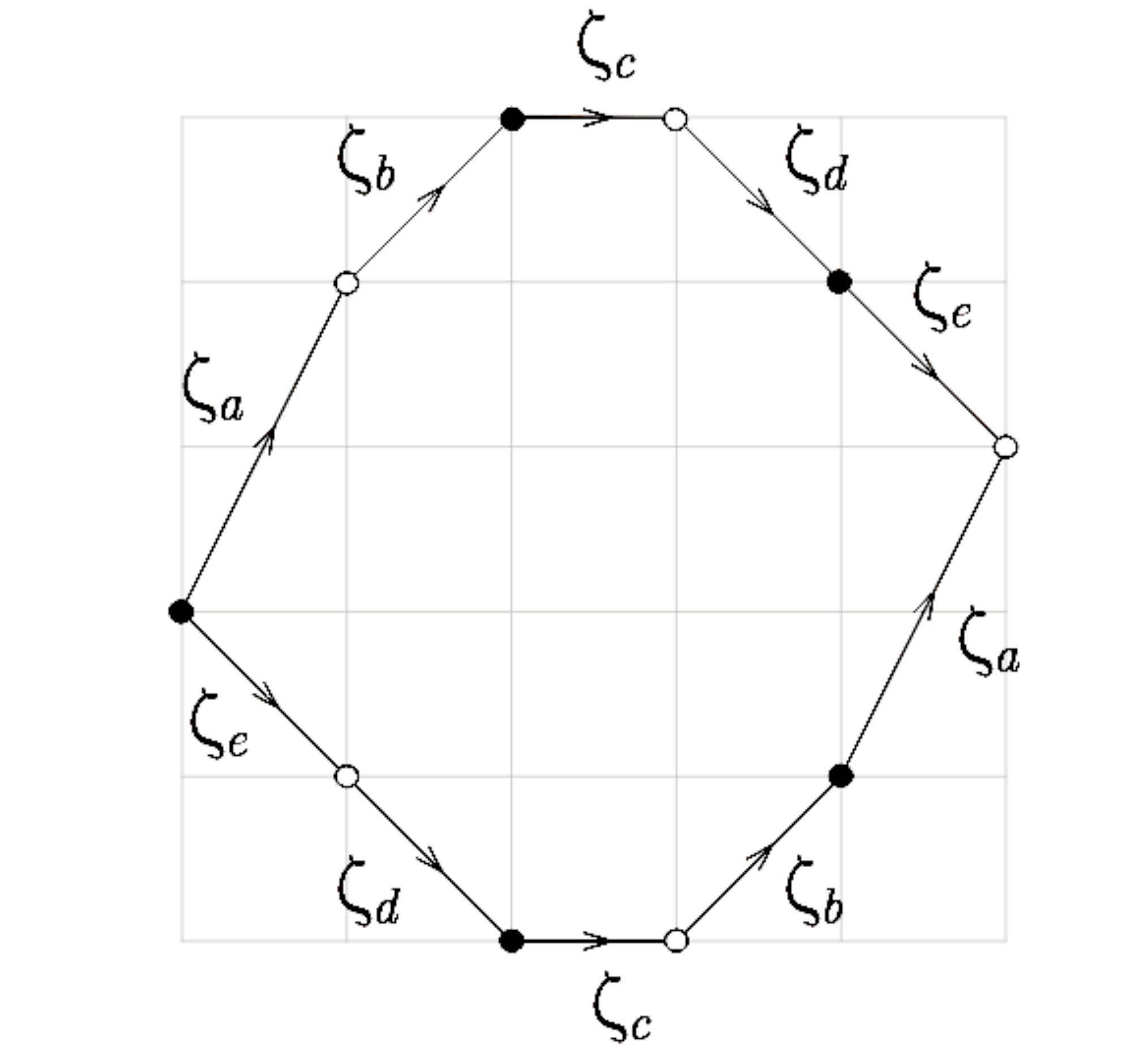}
\caption{Square-tiled surface of $\mathcal{M}_{(1,1)}$.}\label{supplement-H(1,1)}
\end{figure}
\begin{proposition}There is no $\textrm{Aff}(M,\omega)$-invariant supplement of the absolute homology $H_1(M,\mathbb{R})$ inside the relative homology $H_1(M,\Sigma,\mathbb{R})$.
\end{proposition}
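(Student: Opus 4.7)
The plan is to argue by contradiction: suppose $H_1(M,\Sigma,\mathbb{R})=H_1(M,\mathbb{R})\oplus L$ with $L$ an $\textrm{Aff}(M,\omega)$-invariant line (necessarily one-dimensional, since the quotient $H_1(M,\Sigma,\mathbb{R})/H_1(M,\mathbb{R})\cong\widetilde H_0(\Sigma,\mathbb{R})$ is one-dimensional). The induced action of $\textrm{Aff}(M,\omega)$ on $L$ factors through the sign character $\chi:\textrm{Aff}(M,\omega)\to\{\pm 1\}$ given by the permutation of $\Sigma=\{A_{even},A_{odd}\}$, so any generator $\gamma\in L$, normalised so that $\partial\gamma=[A_{odd}]-[A_{even}]$, must satisfy $f_*\gamma=\chi(f)\gamma$ for every $f\in\textrm{Aff}(M,\omega)$.

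First I would dispose of the hyperelliptic involution $\iota$ (the central symmetry $(x,y)\mapsto(5-x,1-y)$ of $P$), which belongs to $\textrm{Aff}(M,\omega)$ and exchanges $A_{even}$ with $A_{odd}$, so $\chi(\iota)=-1$. Since $\iota$ acts by $-\mathrm{Id}$ on $H_1(M,\mathbb{R})$ (hyperelliptic involution on a genus-$2$ surface) and on $\widetilde H_0(\Sigma,\mathbb{R})$ (it swaps the two points), a trace computation in the short exact sequence
\[
0\to H_1(M,\mathbb{R})\to H_1(M,\Sigma,\mathbb{R})\to\widetilde H_0(\Sigma,\mathbb{R})\to 0
\]
yields $\mathrm{tr}(\iota_*)=-4-1=-5=-\dim H_1(M,\Sigma,\mathbb{R})$, so $\iota_*=-\mathrm{Id}$ on the entire $H_1(M,\Sigma,\mathbb{R})$. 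The relation $\iota_*\gamma=-\gamma$ is therefore automatic and places no restriction on $\gamma$; in cohomological language, the restriction of the obstruction class to $\langle\iota\rangle$ vanishes.

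To produce the contradiction I would then bring in a second element of $\textrm{Aff}(M,\omega)$. Fix any $\gamma_0\in H_1(M,\Sigma,\mathbb{Z})$ with $\partial\gamma_0=[A_{odd}]-[A_{even}]$ (for instance the straight segment from $(0,0)$ to $(1,-1)$ in $P$). Every candidate $\gamma$ is of the form $\gamma=\gamma_0+h$ with $h\in H_1(M,\mathbb{R})$, and $f_*\gamma=\chi(f)\gamma$ becomes the linear equation
\[
(f_*-\chi(f)\,\mathrm{Id})\,h\;=\;\chi(f)\gamma_0-f_*\gamma_0\;\in\;H_1(M,\mathbb{R}).
\]
A natural second element comes from the horizontal cylinder decomposition of $(M,\omega)$: a direct inspection of $P$ reveals two horizontal cylinders of circumferences $12$ and $4$ and heights $1$ (total area $16=\operatorname{Area}(P)$), so their moduli $1/12$ and $1/4$ are commensurable and the associated affine Dehn multitwist $f_0$ has derivative $T^{12}$ and fixes both zeros, giving $\chi(f_0)=+1$. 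Using the Picard--Lefschetz formula $f_{0*}\alpha=\alpha+\langle\alpha,c_1\rangle c_1+3\langle\alpha,c_2\rangle c_2$, where $c_1,c_2$ are the core curves, one computes both sides of the above equation in an explicit basis of $H_1(M,\Sigma,\mathbb{Z})$ adapted to $P$ and checks that the cocycle $\chi(f_0)\gamma_0-f_{0*}\gamma_0$ does not lie in the image of $f_{0*}-\mathrm{Id}$ on $H_1(M,\mathbb{R})$; equivalently, that the extension class of the short exact sequence above in $H^1(\textrm{Aff}(M,\omega),H_1(M,\mathbb{R})_\chi)$ is nonzero.

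The main obstacle is exactly this last step. Because $(f_{0*}-\mathrm{Id})$ maps $H_1(M,\mathbb{R})$ into $\mathbb{R}c_1\oplus\mathbb{R}c_2$, the horizontal parabolic alone may fail to rule out solvability if the core curves $c_1,c_2$ are independent in $H_1(M,\mathbb{R})$ and the relevant intersection pairings with $\gamma_0$ admit a matching $h$. In that case one combines the horizontal condition with the analogous condition coming from the vertical cylinder decomposition (or any other affine symmetry not in $\langle\iota\rangle$), exploiting the asymmetric way the two zeros of $\omega$ sit in $P$ to force the global linear system in $h$ to be inconsistent. The delicate point is to set up the bookkeeping so that the required inconsistency becomes transparent, rather than hidden in a 5-dimensional linear algebra computation.
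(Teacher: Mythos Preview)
Your overall strategy matches the paper's: assume an invariant supplement exists, pick a generator $\gamma$ of the one-dimensional line, and derive a contradiction by testing $\gamma$ against parabolic elements of $\textrm{Aff}(M,\omega)$ coming from cylinder decompositions. The discussion of the hyperelliptic involution is correct but not needed; the paper simply passes to the index-$2$ subgroup $\textrm{Aff}_+(M,\omega)$ fixing both zeros, on which $\chi\equiv 1$.

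There are two genuine gaps. First, you stop precisely where the real work begins: you write ``one computes \ldots and checks that the cocycle \ldots does not lie in the image'', but you never perform the computation, and this is the entire content of the proof. Second, and more substantively, your proposed pair of parabolics (horizontal plus vertical) is \emph{not} sufficient. In the paper, after a reduction to a two-dimensional problem, the vertical and horizontal parabolics impose the conditions $s_1=-5/24$ and $s_0=1/6$ on the unknown $\zeta=s_0\zeta_0+s_1\zeta_1$; this system is consistent. The contradiction only appears when one brings in a \emph{third} parabolic, coming from the slope-$1$ cylinder decomposition, which forces $s_0+2s_1=0$ and is incompatible with the first two. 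Your plan, as stated, would not terminate.

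You also miss a simplification that the paper exploits: since any $\textrm{Aff}_+(M,\omega)$-fixed class must have period $\int\omega$ fixed by the linear parts of several non-commuting elements of $SL(2,\mathbb{R})$, that period must vanish. This lets one replace $\gamma_0$ by a class $\zeta^*$ with $\int_{\zeta^*}\omega=0$ and restrict the unknown $h$ to the $2$-dimensional subspace $H_1^{(0)}$, turning your ``5-dimensional linear algebra computation'' into a system in two unknowns where the inconsistency is immediate once the three parabolics are written down.
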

\begin{proof} Let $\textrm{Aff}_+(M,\omega)$ be the subgroup of $\textrm{Aff}(M,\omega)$ of index $\leq 2$ formed by the elements which fix each of the two zeros $A_{odd}$ and $A_{even}$. It acts trivially on $H_0(\Sigma,\mathbb{R})$, and therefore also on any $\textrm{Aff}(M,\omega)$-invariant supplement of $H_1(M,\mathbb{R})$ inside $H_1(M,\Sigma,\mathbb{R})$.
Denote by $\zeta_a,\zeta_b,\zeta_c,\zeta_d,\zeta_e$ the relative homology classes represented respectively by the orienred sides $[A_0 A^t_1]$ 
(or $[A^b_4 A_5]$),$[A^t_1 A^t_2]$ (or $[A^b_3 A^b_4]$), $[A^t_2 A^t_3]$ (or $[A^b_2 A^b_3]$), $[A^t_3 A^t_4]$ (or $[A^b_1 A^b_2]$), $[A^t_4 A_5]$ 
(or $[A_0 A^b_1]$) of $P$. They form a basis of $H_1(M,\Sigma,\mathbb{Z})$. We have $\partial \zeta_a =-\partial \zeta_b=\partial \zeta_c=-\partial \zeta_d=\partial \zeta_e=A_{odd}-A_{even}$.
 
\medskip

The classes $\zeta_0:=2(\zeta_a -\zeta_e) -3(\zeta_b -\zeta_d)$, 
$\zeta_1:= \zeta_a -3 \zeta_c +2 \zeta_e$ form a basis of $H_1^{(0)}(M,\mathbb Q)$. Denote also by $\zeta^*$ the class $\zeta_e - \zeta_d$ with boundary
$2(A_{odd}-A_{even})$, satisfying also $\int _{\zeta^*} \omega =0$. We have to show that there is no absolute class $\zeta \in  H_1(M,\mathbb{R})$
such that $\zeta^* + \zeta$ is invariant under $\textrm{Aff}_+(M,\omega)$. Assume that such a $\zeta$ exists. Obviously, we must have $\int _{\zeta^* +\zeta} \omega =0$, hence $\zeta \in H_1^{(0)}(M,\mathbb R)$. We write $\zeta = s_0 \zeta_0 + s_1 \zeta_1$, for some $s_0,s_1 \in \mathbb R$.

\medskip

To derive a contradiction, we will consider parabolic elements of the Veech group associated to three distinct cusps of the Teichm\"uller curve associated to $(M,\omega)$. 

\begin{itemize}
\item {\bf vertical decomposition}: the surface splits into three cylinders of height $1$,  respective widths $3,8,5$, whose associated homology classes are respectively 
$\zeta_a - \zeta_e$, 
$2(\zeta_a - \zeta_e) + \zeta_b - \zeta_d$, $\zeta_a - \zeta_e + \zeta_b - \zeta_d$. There is therefore an element $A_{vert}$ of $\textrm{Aff}_+(M,\omega)$ with linear part $\left( \begin{array}{cc} 1 & 0 \\  120 & 1 \\  \end{array} \right)$.

One checks that
$$(A_{vert} - {\rm Id})(\zeta^*)= 5 \zeta_0,\quad (A_{vert} - {\rm Id})(\zeta_0)= 0,\quad (A_{vert} - {\rm Id})(\zeta_1)= 24 \zeta_0.$$

\item {\bf horizontal decomposition}: the surface splits into two cylinders of height $1$, respective widths $4,12$, whose associated homology classes are respectively 
$\zeta_b + 2 \zeta_c + \zeta_d$, 
$3(\zeta_b + \zeta_c + \zeta_d) + \zeta_a + 2\zeta_e$. There is therefore an element $A_{hor}$ of $\textrm{Aff}_+(M,\omega)$ with linear part $\left( \begin{array}{cc} 1 & 12 \\  0 & 1 \\  \end{array} \right)$.

One checks that
$$(A_{hor} - {\rm Id})(\zeta^*)= - \zeta_1,\quad (A_{hor} - {\rm Id})(\zeta_0)= 6\zeta_1,\quad (A_{hor} - {\rm Id})(\zeta_1)=0.$$

\item {\bf main diagonal (slope $1$) decomposition}: the surface splits into two cylinders of respective (normalized) heights $1,2$, respective (normalized) widths $4,6$, whose associated homology classes are respectively 
$\zeta_a + 2 \zeta_b + \zeta_c$, 
$2(\zeta_a + \zeta_b + \zeta_c) + \zeta_d -\zeta_e$. There is therefore an element $A_{diag}$ of $\textrm{Aff}_+(M,\omega)$ with linear part $\left( \begin{array}{cc} -11 & 12 \\  -12 & 13 \\  \end{array} \right)$.

One checks that
$$(A_{diag} - {\rm Id})(\zeta^*)= 0,\; (A_{diag} - {\rm Id})(\zeta_0)= \frac 23 (\zeta_1 - 2 \zeta_0), \; (A_{diag} - {\rm Id})(\zeta_1)=\frac 43 (\zeta_1 - 2 \zeta_0).$$
\end{itemize}

If $\zeta^* + s_0 \zeta_0 +s_1 \zeta_1$ was killed by $A_{vert} - {\rm Id}$, $ A_{hor} - {\rm Id}$ and $ A_{diag} - {\rm Id}$, we would have 
$s_1 = - \frac{5}{24}$, $s_0 = \frac 16$, $s_0 +2s_1 =0$, a contradiction which concludes the proof of the proposition.
\end{proof}

\end{document}